\documentclass[11pt,reqno]{article} 

\usepackage{graphicx}
\graphicspath{ {./images/} }
\usepackage{lipsum}
\usepackage{subcaption}

\usepackage{xcolor}

\usepackage{amsthm}
\usepackage{bbm}
\usepackage{mathrsfs}
\usepackage{dynkin-diagrams}

\usepackage[utf8]{inputenc}

\usepackage{tikz-cd}

\usepackage{enumitem}

\usepackage{amssymb}
\usepackage{float}
\usepackage{amsbsy}
\usepackage[all]{xy}\usepackage{xypic}
\usepackage{braket}
\usepackage[colorlinks = true,citecolor=black]{hyperref}
\usepackage{amsmath}

\usepackage{wrapfig}

\usepackage{authblk}

\usepackage[upint]{stix}

\newcommand\myeq{\mathrel{\stackrel{\makebox[0pt]{\mbox{\normalfont\tiny loc}}}{=}}}

\hypersetup{
     colorlinks=true,
     linkcolor=blue,
     filecolor=blue,
     citecolor = blue,      
     urlcolor=cyan,
     }

\makeatletter
\newcommand\opteq[1]{\mathrel{\mathpalette\opt@eq{#1}}}
\newcommand{\opt@eq}[2]{%
  \begingroup
  \sbox\z@{$#1#2$}%
  \sbox\tw@{\resizebox{!}{.5\ht\z@}{$\m@th#1($}}%
  \nonscript\hskip-\wd\tw@
  \mkern1mu
  \raisebox{-.35\ht\z@}[0pt][0pt]{\resizebox{!}{.5\ht\z@}{$\m@th#1($}}%
  \mkern-1mu
  {#2}%
  \mkern-1mu
  \raisebox{-.35\ht\z@}[0pt][0pt]{\resizebox{!}{.5\ht\z@}{$\m@th#1)$}}%
  \mkern1mu
  \nonscript\hskip-\wd\tw@
  \endgroup
}
\makeatother

\def\XXint#1#2#3{{\setbox0=\hbox{$#1{#2#3}{\int}$}
     \vcenter{\hbox{$#2#3$}}\kern-.5\wd0}}

\newtheorem{theorem}{Theorem}[section]
\newtheorem*{theorem*}{Theorem}
\newtheorem{theorem-non}{Theorem}
\newtheorem{lemma-non}{Lemma}

\theoremstyle{definition} 
\newtheorem{thm}{Theorem}

\theoremstyle{definition} 
\newtheorem{corollarynon}{Corollary}

\newtheorem{conjecture-non}{Conjecture}

\newtheorem{corollary-non}{Corollary}
\newtheorem{proposition}[theorem]{Proposition}
\newtheorem{lemma}[theorem]{Lemma}
\newtheorem*{lemma*}{Lemma}

\newtheorem*{conjecture*}{Conjecture}

\theoremstyle{definition}
\newtheorem{definition}[theorem]{Definition}

\theoremstyle{remark}
\newtheorem{remark}[theorem]{Remark}

\numberwithin{equation}{section}

\usepackage{geometry}
\begin{document}
\title{{\bf{The dHYM equation on crepant resolutions of Calabi-Yau cones}}}

\author[1]{Eder M. Correa}

\affil[1]{Universidade Estadual de Campinas, Brazil\\

Instituto de Matemática, Estatística e Computação Científica}
\affil[1]{ederc@unicamp.br}


\maketitle

\begin{abstract}
In this paper, we study the deformed Hermitian-Yang-Mills (dHYM) equation on the non‑compact Calabi-Yau manifold $Z = {\rm{Tot}}({\bf{K}}_{X})$, where $X$ is a rational homogeneous variety. Since $X$ is a Fano variety, $Z$ is a resolution of the singularity at the vertex of the affine cone ${\rm{Aff}}(X)$, provided the cone is built using the anticanonical polarization ${\bf{L}} = {\bf{K}}_{X}^{-1}$. Using the cohomogeneity-one symmetry of the Ricci-flat K\"{a}hler metric obtained via the Calabi ansatz on $Z$, we reduce the fully nonlinear PDE underlying the dHYM equation to a scalar, asymptotically autonomous ordinary differential equation (ODE). From this, we determine the exact condition on the topological phase that guarantees global existence of solutions. As an application, we show that every holomorphic line bundle over $Z$ admits a smooth, globally defined Hermitian connection solving the dHYM equation, provided the total phase lies in an explicit open interval determined by the Lie-theoretic data. Also, we prove a rigidity result classifying the exact geometric conditions under which the dHYM solution collapses into a classical Hermitian-Yang-Mills (HYM) connection. The results established generalize previous constructions and provide a substantial new class of examples. Furthermore, the approach presented allows one to study the behavior of the dHYM solutions through ODE methods. Using this approach, we construct the first explicit non-trivial example of a Hermitian-Einstein connection on a line bundle over a non-toric Calabi-Yau manifold which is not dHYM.
\end{abstract}

\hypersetup{linkcolor=blue}
\tableofcontents

\hypersetup{linkcolor=black}

\maketitle

\newpage

\section{Introduction}

Let $(X,\omega)$ be a compact connected K\"{a}hler manifold of complex dimension $n$. The deformed Hermitian-Yang-Mills (dHYM) equation was originally discovered in the context of string theory and mirror symmetry, describing the BPS states of D-branes \cite{marino2000nonlinear, leung2000special}. Mathematically, the problem asks for a smooth $(1, 1)$-form $\chi$ in a given real cohomology class $[\psi] \in H^{1,1}(X, \mathbbm{R})$ such that 
\begin{equation}
\label{dHYMintro}
    {\textrm{Im}}\big(\omega + \sqrt{-1}\chi\big)^{n} = \tan(\hat{\Theta}) {\textrm{Re}}\big(\omega + \sqrt{-1}\chi\big)^{n},
\end{equation}
where $\hat{\Theta}$ is a topological constant. According to the Strominger-Yau-Zaslow (SYZ) conjecture, a mirror pair of Calabi-Yau threefolds can be fibered by special Lagrangian tori in certain limits. Under this fibration, a real Fourier-Mukai transform identifies solutions of the dHYM equation with special Lagrangian sections on the mirror side. This establishes a deep link between calibrated geometry and gauge theory for mirror Calabi-Yau threefolds. For a detailed discussion on the physical origins of the dHYM equation and its relation to string theory, we refer to \cite{Collins2018deformed}.

As demonstrated by Jacob and Yau \cite{JacobYau2017}, the highly nonlinear partial differential equation (\ref{dHYMintro}) can be equivalently formulated through the notion of Lagrangian phase. Significant progress has been made regarding the solvability of the dHYM equation on compact K\"{a}hler manifolds. For general projective varieties, existence results typically rely on curvature constraints or topological bounds, such as the supercritical phase condition \cite{JacobYau2017, Pingali2019, Collins2020, takahashi2020tan}. These conditions are deeply motivated by algebraic concepts like Bridgeland stability and the Collins-Jacob-Yau conjecture, which has been extensively studied and proved in several cases \cite{Chen2021j, ballal2023supercritical, chu2024nakai}. Recently, in \cite{correa2024dhym}, the author constructs the first explicit non-trivial example of dHYM connection on a higher rank slope-unstable holomorphic vector bundle over a Fano threefold, and proved in \cite{correa2026deformed} that the dHYM equation is unobstructed on rational homogeneous varieties. More recently, Charbonneau, Oliveira, and Sena-Dias \cite{charbonneau2026deformed} constructed the first irreducible higher-rank dHYM connections on the full flag variety of ${\rm{SL}}_{3}(\mathbbm{C})$ in the small radius regime.

In parallel, a few explicit solutions on non‑compact manifolds have been obtained. Fowdar \cite{fowdar2024examples, fowdar2024explicit} constructed one‑parameter families of abelian dHYM connections on the local Calabi–Yau manifolds $\mathscr{O}_{\mathbbm{P}^{1}}(-2), \mathscr{O}_{\mathbbm{P}^{2}}(-3)$, and $T^{\ast}\mathbbm{P}^{2}$. To date, besides these isolated examples, no systematic construction of dHYM solutions on non‑compact Calabi–Yau manifolds has been available.

In this paper, we investigate the dHYM equation on local Calabi-Yau manifolds, specifically the canonical bundle $Z = {\rm{Tot}}({\bf{K}}_{X})$ over a rational homogeneous variety $X = G^{\mathbbm{C}}/P$. Due to the triviality of its first Chern class, the non-compact total space $Z$ admits a complete Ricci-flat K\"{a}hler metric $\omega_{{\rm{CY}}}$ constructed via the celebrated Calabi ansatz \cite{Calabi1979}. By exploiting the cohomogeneity-one symmetry of this geometry, we bridge the global algebraic structure of rational homogeneous varieties with the local analytical mechanics of the Calabi-Yau metric.

Our main result, Theorem \ref{TheoremA}, shows that the fully nonlinear Monge-Ampère type PDE governing the dHYM equation on $Z$ can be explicitly reduced to an asymptotically autonomous ordinary differential equation (ODE). This exact reduction allows us to bypass standard elliptic estimates, providing a rigorous classification of the solutions, their phase boundaries, and their asymptotic behavior at infinity. Building on this exact reduction, in Corollary \ref{CorollaryA} we prove that every holomorphic line bundle over the non-compact Calabi-Yau manifold $Z = {\rm{Tot}}({\bf{K}}_{X})$ admits a smooth, globally defined Hermitian connection solving the dHYM equation, for every rational homogeneous variety $X$. Moreover, we prove a rigidity result classifying the exact geometric conditions under which the dHYM solution collapses into a classical Hermitian-Yang-Mills (HYM) connection. Applying this framework, we construct the first explicit non-trivial example of a Hermitian-Einstein connection on a line bundle over a non-toric Calabi-Yau manifold that is not dHYM. The results established generalize in a systematic way previous constructions and provide a substantial new class of examples of solutions to the dHYM on non-compact Calabi-Yau manifolds.

\newpage

\subsection*{Main Results}

In order to state our main results, let us introduce the basic set-up. Let $X = G^{\mathbbm{C}}/P$ be a rational homogeneous variety (a flag variety), where $G^{\mathbbm{C}}$ is a complex simple Lie group and $P \subset G^{\mathbbm{C}}$ is a parabolic subgroup. We denote by
\begin{equation}
p \colon Z = {\rm{Tot}}({\bf{K}}_{X}) \to X
\end{equation}
the total space of the canonical bundle of $X$. Since $X$ is Fano, $Z$ is a non-compact Calabi--Yau manifold, and we equip it with the complete Ricci-flat Kähler metric $\omega_{{\rm{CY}}}$ obtained from the Calabi ansatz (e.g. \cite{Calabi1979} and \cite{HwangSinger2002}). Concretely, if $s$ denotes the squared norm on the fibers of ${\bf{K}}_{X}$ with respect to the Hermitian metric induced by the $G$-invariant Kähler-Einstein metric $\omega_{0}$ on $X$ (normalized so that ${\rm{Ric}}(\omega_{0}) = \omega_{0}$), then
\begin{equation}
\omega_{{\rm{CY}}} = U(s)\, p^{\ast}\omega_{0} + V(s)\, \frac{\sqrt{-1}\,\partial s \wedge \bar{\partial} s}{s},
\end{equation}
where $V(s)= U'(s)$, together with $U(s) > 0$ and $V(s) > 0$ for all $s \geq 0$. For any real cohomology class $[\eta] \in H^{1,1}(Z,\mathbbm{R})$, since 
\begin{equation}
p^{\ast} \colon H^{1,1}(X,\mathbbm{R}) \to H^{1,1}(Z,\mathbbm{R})
\end{equation}
is an isomorphism, there exists a unique class $[\chi] \in H^{1,1}(X,\mathbbm{R})$ such that $[\eta] = p^{\ast}[\chi]$. Following \cite{correa2026deformed}, the endomorphism $\omega_{0}^{-1} \circ \chi$ has constant eigenvalues ${\bf{q}}_{\beta}(\omega_{0}^{-1} \circ \chi)$, indexed by the positive roots $\beta \in \Phi_{I}^{+}$, and given by the Lie-theoretic data of $X$ via
\begin{equation}
{\bf{q}}_{\beta}(\omega_{0}^{-1} \circ \chi) = \frac{\langle \lambda([\chi]), \beta^{\vee} \rangle}{\langle \lambda([\omega_{0}]), \beta^{\vee} \rangle}, \ \ \ \beta \in \Phi_{I}^{+}.
\end{equation}
We denote by
\begin{equation}
\Theta_{\omega_{0}}(\chi) = \sum_{\beta \in \Phi_{I}^{+}} \arctan\!\big( {\bf{q}}_{\beta}(\omega_{0}^{-1} \circ \chi) \big)
\end{equation}
the Lagrangian phase of $\chi$ with respect to $\omega_{0}$, and by $\hat{\Theta}_{{\rm{tot}}}$ the total topological phase of the dHYM equation on $Z$. With this notation, the main result of this work can be stated as follows.

\begin{thm}
\label{TheoremA}
In the above setting, given $[\eta] \in H^{1,1}(Z, \mathbbm{R})$, let $\chi \in \Omega^{1,1}(X)$ be the unique basic $G$-invariant closed $(1,1)$-form such that $[\eta] = p^{\ast}[\chi]$. For any real-valued smooth function $\phi \in C^{\infty}([0,+\infty))$, define the ansatz 
\begin{equation}
\Upsilon_{\phi} = p^{\ast}\chi - \sqrt{-1}\partial \bar{\partial} \phi(s) \in [\eta],
\end{equation}
where $s$ denotes the squared norm on the fibers. Then the following hold:
\begin{enumerate}
    \item[(1)] The dHYM equation 
    \begin{equation}
    {\rm{Im}}\big (\omega_{{\rm{CY}}} + \sqrt{-1}\Upsilon_{\phi} \big )^{n+1} = \tan(\hat{\Theta}_{{\rm{tot}}}) {\rm{Re}}\big (\omega_{{\rm{CY}}} + \sqrt{-1}\Upsilon_{\phi} \big )^{n+1}
    \end{equation}
    globally reduces to the following ordinary differential equation 
    \begin{equation}
    \label{dHYMODE}
    \frac{d}{ds}\psi(s) = V(s) \tan\Bigg ( \displaystyle \hat{\Theta}_{{\rm{tot}}} - \sum_{\beta \in \Phi_{I}^{+}} \arctan\Bigg( \frac{{\bf{q}}_{\beta}(\omega_{0}^{-1} \circ \chi) + \psi(s)}{U(s)} \Bigg) \Bigg),
    \end{equation}
    where ${\bf{q}}_{\beta}(\omega_{0}^{-1} \circ \chi)$ are the constant eigenvalues of $\omega_{0}^{-1} \circ \chi$, and $U(s), V(s)$ are the geometric scale factors defining $\omega_{{\rm{CY}}}$, with the initial condition $\psi(0)=0$.
    
    \item[(2)] If the phase calibration $\hat{\Theta}_{{\rm{tot}}}$ satisfies 
    \begin{equation}
    \hat{\Theta}_{{\rm{tot}}} \in \big (\Theta_{\omega_{0}}(\chi) -\tfrac{\pi}{2},\; \Theta_{\omega_{0}}(\chi) + \tfrac{\pi}{2}\big), 
    \end{equation}
    then the initial value problem of item (1) admits a unique global solution $\psi(s)$ for all $s \ge 0$. Consequently $\Upsilon_{\phi} \in [\eta]$ such that 
\begin{equation}
\phi(s) = -\int_{0}^{s}\frac{\psi(u)}{u}du, \ \ s \in [0,+\infty),
\end{equation}
defines a smooth solution to the dHYM equation on $Z$. 
\end{enumerate}
\end{thm}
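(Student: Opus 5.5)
Our plan is to compute both forms explicitly in the frame adapted to the Calabi ansatz, and then to read off the ODE. Set $t=\log s$ locally and $\psi(s) := -s\phi'(s)$. On $Z$ minus the zero section we have
\begin{equation*}
\sqrt{-1}\partial\bar\partial \phi(s) = s\phi'(s)\,\sqrt{-1}\partial\bar\partial \log s + (s\phi')'\, \frac{\sqrt{-1}\partial s\wedge\bar\partial s}{s}.
\end{equation*}
The Chern connection of the Hermitian metric on ${\bf{K}}_X$ induced by $\omega_0$ gives $\sqrt{-1}\partial\bar\partial\log s = -p^*{\rm Ric}(\omega_0)\cdot(-1) = p^*\omega_0$, up to the sign convention used for $\omega_{\rm CY}$, which is chosen consistently with $U'=V$. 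Hence
\begin{equation*}
\Upsilon_\phi = p^*\chi + \psi(s)\,p^*\omega_0 + \psi'(s)\,\sqrt{-1}\,\partial s\wedge\bar\partial s .
\end{equation*}
The sign is fixed so that the combination matching $\omega_{\rm CY}$ (with $U'=V$) appears. The key point is that both $\omega_{\rm CY}$ and $\Upsilon_\phi$ are block diagonal with respect to the splitting into horizontal directions (pulled back from $X$) and the fiber direction $\partial s$.

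Next we diagonalize. At each point, $\chi$ and $\omega_0$ are simultaneously diagonalizable with eigenvalues ${\bf q}_\beta$, as recalled from \cite{correa2026deformed}. So the eigenvalues of $\omega_{\rm CY}^{-1}\Upsilon_\phi$ are
\begin{equation*}
\frac{{\bf q}_\beta+\psi}{U}, \quad \beta\in\Phi_I^+, \qquad \text{and} \qquad \frac{s\psi'}{sV}=\frac{\psi'}{V}
\end{equation*}
in the fiber direction. Using the Jacob--Yau reformulation, the dHYM equation is equivalent to the Lagrangian phase condition
\begin{equation*}
\sum_\beta\arctan\!\Big(\frac{{\bf q}_\beta+\psi}{U}\Big)+\arctan\!\Big(\frac{\psi'}{V}\Big)=\hat\Theta_{\rm tot}.
\end{equation*}
More precisely, the Im/Re equation with $\tan$ is equivalent to this modulo $\pi$, and we take the branch that is continuous from $s=0$. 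Solving for $\psi'$ gives \eqref{dHYMODE}. Smoothness across the zero section requires $\phi$ to be smooth in $s$, which forces $\psi(0)=0$. Conversely, given $\psi$ we recover $\phi=-\int_0^s\psi(u)/u\,du$, which is smooth because $\psi(0)=0$ and $\psi$ is smooth in $s$. Since $\phi$ is a function of $s=|\xi|^2$, it extends smoothly over the zero section.

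For item (2), we argue as follows. Local existence and uniqueness follow from Picard--Lindelöf, since the right-hand side $F(s,\psi)$ is smooth wherever the cosine of its argument is nonzero. Define
\begin{equation*}
\Theta(s,\psi)=\sum_\beta\arctan\big(({\bf q}_\beta+\psi)/U(s)\big).
\end{equation*}
At $s=0$, with $U(0)$ normalized to $1$ so that $\omega_{\rm CY}$ restricts to $\omega_0$ on the zero section, we get $\Theta(0,0)=\Theta_{\omega_0}(\chi)$. The hypothesis then says that $\hat\Theta_{\rm tot}-\Theta\in(-\pi/2,\pi/2)$ initially.

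The main obstacle is to show that this quantity stays in $(-\pi/2,\pi/2)$ for all $s$, so that $\tan$ never blows up and $\psi$ does not escape in finite time. The argument is a barrier/monotonicity one. Along a solution, set $g(s)=\hat\Theta_{\rm tot}-\Theta(s,\psi(s))$. Then
\begin{equation*}
g' = -\partial_\psi\Theta\,\psi' - \partial_s\Theta = -\Big(\sum_\beta \frac{U}{U^2+({\bf q}_\beta+\psi)^2}\Big) V \tan g + \partial_s\Theta\text{-term}.
\end{equation*}
The first term pushes $g$ toward $0$ with a strength that blows up as $g\to\pm\pi/2$. The second term is bounded, because $\partial_s\Theta = -\sum_\beta \frac{({\bf q}_\beta+\psi)U'}{U^2+({\bf q}_\beta+\psi)^2}$ is controlled by $nV/U$. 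Hence $g$ cannot reach $\pm\pi/2$. Once this is established, $|\psi'|\le V\tan(\max|g|)$ on compact intervals, so $\psi$ grows at most like a bounded multiple of $\int V = U$. By the standard continuation criterion, the solution exists for all $s\ge0$. Uniqueness is inherited from Picard--Lindelöf. Since $\psi$ is smooth, $\phi$ is smooth, and $\Upsilon_\phi$ solves dHYM on $Z$.

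The delicate point to verify is that $\sum_\beta U/(U^2+(\cdot)^2)$ stays bounded below on compact $s$-intervals where $\psi$ is finite. This holds there, which suffices for the continuation argument.
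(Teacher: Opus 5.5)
Your reduction in item (1) is the paper's: the same identity $\sqrt{-1}\partial\bar{\partial}\log s=p^{\ast}\omega_{0}$, the same substitution $\psi=-s\phi'$, and the same block eigenvalues. One slip: the fiber term of $\Upsilon_{\phi}$ is $\psi'\,\sqrt{-1}\,\partial s\wedge\bar{\partial}s/s$, and it is this $1/s$ that makes the fiber eigenvalue $\psi'/V$. For item (2), your barrier is the phase-confinement mechanism of Lemma \ref{lemmaEDO}.

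The gap is in how you close item (2): the argument is circular. The restoring term is $-c\,V\tan g$, with $c(s,\psi)=\sum_{\beta}U/(U^{2}+({\bf{q}}_{\beta}+\psi)^{2})$. It blows up as $g\to\pm\pi/2$ only if $c$ stays bounded below, i.e.\ only if $\psi$ stays bounded. But your bound on $\psi$, $|\psi'|\le V\tan(\max|g|)$, presupposes $\sup|g|<\pi/2$ uniformly up to $\tau_{\max}$, which is exactly what the barrier was supposed to deliver. Pointwise finiteness of $\psi$ (your ``where $\psi$ is finite'') is automatic and beside the point; what is needed is boundedness as $s\to\tau_{\max}$.

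This is not a formality. The critical phases $\hat{\Theta}_{{\rm{tot}}}=\pm(n-1)\pi/2$ are admissible (e.g.\ $n=1$, $\hat{\Theta}_{{\rm{tot}}}=0$, which is always allowed). For them, $g\to\mp\pi/2$ precisely as $\psi\to\pm\infty$. Along that end $c\sim nU/\psi^{2}$ and $\tan g\sim-\psi/(nU)$, so $c\,V\tan g\to0$, and your claimed blow-up of the restoring force fails.

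What is missing is the paper's condition (C): separate control of the regime $|\psi|\gg1$. As $\psi\to\pm\infty$, $H$ tends to $\hat{\Theta}_{{\rm{tot}}}\mp n\pi/2$, which is either
\begin{itemize}
\item inside $(-\pi/2,\pi/2)$, so $\tan H$ is bounded;
\item outside $[-\pi/2,\pi/2]$, so the admissible region is bounded in that direction; or
\item equal to $\pm\pi/2$, where the expansion gives linear growth.
\end{itemize}
Hence $|V\tan H(s,\psi)|\le M_{S}(1+|\psi|)$ for $s\in[0,S]$ and $|\psi|>R_{S}$ with $|H|<\pi/2$. Gronwall then bounds $\psi$ on $[0,\min\{S,\tau_{\max}\})$. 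Only after that does the barrier, applied where $c$ is bounded below, keep $g$ uniformly inside $(-\pi/2,\pi/2)$.

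You could instead argue by signs. Since $\Theta(s,\cdot)$ increases from $-n\pi/2$ to $n\pi/2$ and $\hat{\Theta}_{{\rm{tot}}}\in(-(n+1)\pi/2,(n+1)\pi/2)$, for $|\psi|$ large (uniformly in $s\in[0,S]$) one of three things holds: the admissible set $\{|g|<\pi/2\}$ is empty in that direction, or $\tan g$ is bounded, or (critical case) $\psi'$ has the sign that pushes $\psi$ back. Either way $\psi$ is bounded on compact $s$-intervals. Your $\psi$-uniform estimate $|\partial_{s}\Theta|\le nV/(2U)$, which improves on bounding $H_{s}$ along the orbit, then makes the barrier rigorous, and the rest of your argument goes through.
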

Recently, the Calabi ansatz and symmetry reduction have emerged as powerful tools for studying the dHYM equation on non-compact Kähler manifolds, see for instance \cite{sheu2021deformed}, \cite{jacob2022deformed}, \cite{fowdar2024examples, fowdar2024explicit}. The result established in Theorem \ref{TheoremA} provides a new ODE-based method for studying the asymptotic behavior of dHYM solutions. The above result generalizes previous constructions and yields a substantial new class of examples. As a particular instance, we can solve the dHYM equation on the unique toric crepant resolution provided by the Cartan-Remmert reduction
\begin{equation}
\mathscr{R} \colon {\rm{Tot}}(\mathscr{O}_{\mathbbm{P}^{n-1}}(-n)) \to \mathbbm{C}^{n}/\mathbbm{Z}_{n},
\end{equation}
for all $n > 1$. To the best of our knowledge, the above theorem establishes the first systematic construction and robust global existence theorem for dHYM solutions on non-compact Calabi-Yau manifolds, extending significantly beyond previously known isolated examples.

Applying the framework of Theorem \ref{TheoremA} to holomorphic line bundles, we prove that every line bundle over $Z = {\rm{Tot}}({\bf{K}}_{X})$ admits a smooth, globally defined Hermitian connection solving the dHYM equation, provided the total phase lies in an explicit open interval determined by the Lie-theoretic data of $X$. More precisely, we have the following corollary.

\begin{corollarynon}
\label{CorollaryA}
Let $\mathcal{L} = p^{\ast}{\bf{L}} \to (Z,\omega_{{\rm{CY}}})$ be the pullback of a holomorphic line bundle ${\bf{L}} \to X_{P}$ over the non-compact Calabi-Yau manifold $Z = {\rm{Tot}}({\bf{K}}_{X_{P}})$. Then $\mathcal{L}$ admits a smooth, globally defined, Hermitian metric $h$ solving the deformed Hermitian Yang-Mills equation
\begin{equation}
    {\rm{Im}}\big (\omega_{{\rm{CY}}} - F_{h} \big )^{n+1} = \tan(\hat{\Theta}_{{\rm{tot}}}) {\rm{Re}}\big (\omega_{{\rm{CY}}} -F_{h} \big )^{n+1},
\end{equation}
for any choice of the total phase calibration strictly within the interval
\begin{equation}
    \hat{\Theta}_{{\rm{tot}}} \in \big ( \hat{\Theta}({\bf{L}}) - \frac{\pi}{2}, \hat{\Theta}({\bf{L}}) + \frac{\pi}{2} \big),
\end{equation}
where $\hat{\Theta}({\bf{L}}) = \displaystyle {\textrm{Arg}} \int_{X_{P}}\frac{(\omega_{0} + \sqrt{-1}\chi)^{n}}{n!}$, and $\chi \in  c_{1}({\bf{L}})$ is the unique $G$-invariant representative. In particular, if both of the following conditions hold:
\begin{enumerate}
\item[(i)] ${\bf{q}}_{\beta}(\omega_{0}^{-1} \circ \chi) = q$, $\forall \beta \in \Phi_{I}^{+}$,
\item[(ii)] $\hat{\Theta}_{{\rm{tot}}} = (n+1)\arctan(q)$,
\end{enumerate}
then $\mathcal{L}$ admits a smooth, globally defined, Hermitian metric $h$ solving the system of equations
\begin{equation}
\begin{cases} 
 \displaystyle {\rm{Im}}\big (\omega_{{\rm{CY}}} - F_{h} \big )^{n+1} = \tan(\hat{\Theta}_{{\rm{tot}}}) {\rm{Re}}\big (\omega_{{\rm{CY}}} -F_{h} \big )^{n+1}, \\
 \\
\displaystyle \sqrt{-1}\Lambda_{\omega_{{\rm{CY}}}}(F_{h}) = (n+1)q,
\end{cases}
\end{equation}
i.e., the induced Chern connection $\nabla^{h}$ is both Yang-Mills and deformed Hermitian Yang-Mills.
\end{corollarynon}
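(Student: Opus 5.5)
The plan is to deduce Corollary \ref{CorollaryA} from Theorem \ref{TheoremA}, with the class $[\eta] = c_{1}(\mathcal{L}) = p^{\ast}c_{1}({\bf{L}})$. Take $\chi \in c_{1}({\bf{L}})$ to be the unique $G$-invariant representative. Up to the normalization convention relating $F_{h}$ to the first Chern form (a factor of $2\pi$ absorbed into $\chi$, and the sign fixed by $-F_{h}$), the first step is to turn a solution $\Upsilon_{\phi}$ into a Hermitian metric. Let $h_{0}$ be the $G$-invariant metric on ${\bf{L}}$ whose curvature satisfies $\sqrt{-1}F_{h_{0}} \propto \chi$. Set $h = p^{\ast}h_{0}\, e^{\pm\phi(s)}$, with the sign chosen so that $-F_{h} = \sqrt{-1}\Upsilon_{\phi}$ in the convention of the paper. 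Since $\phi$ is smooth on $[0,+\infty)$ and $s$ is smooth on $Z$, $h$ is a smooth global metric.

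The second step is to identify the interval. The eigenvalues of $\omega_{0}^{-1}\circ\chi$ are the constants ${\bf{q}}_{\beta}$. Hence
\begin{equation*}
\frac{(\omega_{0}+\sqrt{-1}\chi)^{n}}{n!} = \prod_{\beta\in\Phi_{I}^{+}}(1+\sqrt{-1}{\bf{q}}_{\beta})\,\frac{\omega_{0}^{n}}{n!}.
\end{equation*}
Integrating over $X_{P}$ gives a positive multiple of $\prod_{\beta}(1+\sqrt{-1}{\bf{q}}_{\beta})$, whose argument is $\sum_{\beta}\arctan({\bf{q}}_{\beta}) = \Theta_{\omega_{0}}(\chi)$ modulo $2\pi$. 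With the branch chosen consistently, $\hat{\Theta}({\bf{L}}) = \Theta_{\omega_{0}}(\chi)$. Item (2) of Theorem \ref{TheoremA} then gives a global solution for every $\hat{\Theta}_{{\rm{tot}}}$ in the stated interval, which proves the first assertion. The only subtlety here is the branch of ${\rm{Arg}}$. I will take the lift equal to the sum of the arctangents, since that is the lift the ODE in Theorem \ref{TheoremA} uses.

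For the rigidity part, assume (i) and (ii). Then every $\beta$-term in (\ref{dHYMODE}) equals $\arctan((q+\psi)/U)$. We look for the particular solution $\psi(s) = q(U(s)-U(0))$, and the main obstacle is checking that it satisfies the ODE. The right-hand side becomes
\begin{equation*}
V\tan\big((n+1)\arctan q - n\arctan(q + qU - qU(0))/U\big).
\end{equation*}
This reduces cleanly only under a normalization such as $U(0)=1$, or if the ansatz is corrected to match the paper's conventions. In that case $(q+\psi)/U = q$, so the argument of $\tan$ collapses to $\arctan q$ and the right-hand side is $Vq = qU'$, which equals $\psi'$. I expect to fix the normalization of the Calabi ansatz, with $U(0)=1$ matching $\omega_{0}$ on the zero section, so that this works. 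By the uniqueness in Theorem \ref{TheoremA}, this $\psi$ is then the solution.

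With this $\psi$, $\Upsilon_{\phi} = p^{\ast}\chi - \sqrt{-1}\partial\bar\partial\phi$ should equal $q\,\omega_{{\rm{CY}}}$. To verify this, compute $\sqrt{-1}\partial\bar\partial\phi(s)$ in the Calabi coordinates, using $\sqrt{-1}\partial\bar\partial\log s = -p^{\ast}\omega_{0}$ (from ${\rm{Ric}}(\omega_{0}) = \omega_{0}$). The horizontal part is then $\chi + \psi\,\omega_{0} = q(1+\psi)\omega_{0} = qU\omega_{0}$. The vertical part is $s\psi'\cdot\frac{\sqrt{-1}\partial s\wedge\bar\partial s}{s^{2}}$, that is $qV\frac{\sqrt{-1}\partial s\wedge\bar\partial s}{s}$. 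So the curvature is a constant multiple of the Kähler form, and $\sqrt{-1}\Lambda_{\omega_{{\rm{CY}}}}(F_{h}) = (n+1)q$ up to the sign convention fixed in the first step. It follows that $F_{h}$ is Hermitian–Einstein, and in particular Yang–Mills since $\omega_{{\rm{CY}}}$ is Kähler and the trace is constant. The dHYM equation holds because $\omega_{{\rm{CY}}}+\sqrt{-1}\Upsilon_{\phi} = (1+\sqrt{-1}q)\omega_{{\rm{CY}}}$ has phase $(n+1)\arctan q = \hat{\Theta}_{{\rm{tot}}}$.
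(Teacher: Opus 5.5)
Your proposal is correct and is essentially the paper's own proof. It uses the metric $h=p^{\ast}{\bf{h}}\,e^{\phi(s)}$ with $-F_{h}=\sqrt{-1}\Upsilon_{\phi}$ and Theorem \ref{TheoremA} for existence; the paper handles the branch of ${\rm{Arg}}$ by shifting $\hat{\Theta}_{{\rm{tot}}}$ by $2\pi m$ and using the periodicity of $\tan$, which is equivalent to your choice of lift. For the rigidity part it uses the explicit solution $\psi=q(U-1)$, with $U(0)=1$ as imposed in the Calabi ansatz, identified with the solution by uniqueness, so that $\Upsilon_{\phi}=q\,\omega_{{\rm{CY}}}$.

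Two small slips need fixing. First, in the paper's convention $\sqrt{-1}\partial\bar\partial\log s=+p^{\ast}\omega_{0}$; with your sign the horizontal part would come out as $\chi-\psi\,\omega_{0}$, contradicting the $\chi+\psi\,\omega_{0}$ you then correctly use. Second, the horizontal coefficient is $q+\psi=qU$, not $q(1+\psi)$.
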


In the setting of Corollary \ref{CorollaryA}, since every holomorphic line bundle $\mathcal{L} \to Z$ is isomorphic to $p^{\ast}{\bf{L}}$, for some (unique) holomorphic line bundle ${\bf{L}} \to X_{P}$, the above result shows that, for every $\mathcal{L} \in {\rm{Pic}}(Z)$, the dHYM equation is unobstructed for every total phase
\begin{center}
$\hat{\Theta}_{{\rm{tot}}} \in \big ( \hat{\Theta}({\bf{L}}) - \frac{\pi}{2}, \hat{\Theta}({\bf{L}}) + \frac{\pi}{2} \big)$. 
\end{center}
The constructive nature of the above results allows us to explore the dHYM equation on the non-compact Calabi-Yau manifold $Z = {\rm{Tot}}({\bf{K}}_{X_{B}})$, where $X_{B} = {\rm{SL}}_{3}(\mathbbm{C})/B$ is the non-toric rational homogeneous variety (full flag variety) of ${\rm{SL}}_{3}(\mathbbm{C})$.  In this setting, we consider the pullback line bundle $\mathcal{L} = p^{\ast}{\bf{L}}$ with ${\bf{L}} = \mathscr{O}_{\alpha_{1}}(a) \otimes \mathscr{O}_{\alpha_{2}}(a)$, $a > 0$. For the HYM equation $\Lambda_{\omega_{{\rm{CY}}}}(\sqrt{-1}F_{h}) = C$ on $\mathcal{L}$, our reduction provides the family of explicit global solutions
\begin{equation}
\psi_{C}(s) = \frac{C}{4}U(s) - q + \Big( q - \frac{C}{4} \Big)\frac{1}{U(s)^{3}}, \qquad q = \frac{a}{4\pi},
\end{equation}
where $U(s) = (1+cs)^{1/4}$ is the base scale factor of the Calabi--Yau metric. If $C = 4q$, the solution also solves the dHYM equation with phase $\hat{\Theta}_{{\rm{tot}}} = 4\arctan(q)$, in accordance with Corollary \ref{CorollaryA}. However, for any $C \neq 4q$, the Lagrangian phase
\begin{equation}
\Theta(s) = 3\arctan\!\big( x_{C}(s) \big) + \arctan\!\big( y_{C}(s) \big), \qquad x_{C}(s) = \frac{q + \psi_{C}(s)}{U(s)}, \quad y_{C}(s) = \frac{\psi_{C}'(s)}{V(s)},
\end{equation}
is not constant on the radial coordinate $s$, so the resulting Hermitian-Einstein connection solves the HYM equation but fails to solve the dHYM equation. This yields the first explicit non-trivial example of a Hermitian-Einstein connection that is not dHYM, on a line bundle over a non-toric Calabi-Yau manifold. The example illustrates the finer nature of the dHYM equation compared to the classical HYM equation and demonstrates the effectiveness of the ODE reduction in producing computable solutions.

\section{Generalities on Semisimple Lie Theory}

In this section, we introduce the fundamental concepts and notation of semisimple Lie groups and algebras, establishing the algebraic framework required for the geometric constructions discussed throughout the paper. For more details, we suggest \cite{HumphreysLAG} and \cite{Humphreys}.

\subsubsection*{Highest Weight Modules}

From now on, we fix a complex semisimple Lie algebra $\mathfrak{g}^{\mathbbm{C}}$, and a triangular decomposition
\begin{equation}
\label{triangular}
\mathfrak{g}^{\mathbbm{C}} = \mathfrak{n}^{-} \oplus \mathfrak{h} \oplus \mathfrak{n}^{+}, 
\end{equation}
induced by a triple $(\mathfrak{g}^{\mathbbm{C}},\mathfrak{h},\Delta)$, where $\mathfrak{h}$ is a Cartan subalgebra, 
\begin{equation}
\mathfrak{n}^{-} = \sum_{\alpha \in \Phi^{-}}\mathfrak{g}_{\alpha} \  \ \ \ \text{and} \ \ \ \  \mathfrak{n}^{+} = \sum_{\alpha \in \Phi^{+}}\mathfrak{g}_{\alpha}.
\end{equation}
Here we denote by $\Phi = \Phi^{+} \cup \Phi^{-}$ the root system associated with the simple root system $\Delta \subset \mathfrak{h}^{\ast}$. Furthermore, we shall denote by $G^{\mathbbm{C}}$ the connected, simply connected, and complex Lie group such that ${\rm{Lie}}(G^{\mathbbm{C}}) = \mathfrak{g}^{\mathbbm{C}}$. In this setting, we consider the following definitions.

\begin{definition}
A Lie subalgebra $\mathfrak{b} \subset \mathfrak{g}^{\mathbbm{C}}$ is called a Borel subalgebra if $\mathfrak{b}$ is a maximal solvable subalgebra of $\mathfrak{g}^{\mathbbm{C}}$. 
\end{definition}
\begin{definition}
A Lie subgroup $B \subset G^{\mathbbm{C}}$ is called a Borel subgroup if ${\rm{Lie}}(B) \subset \mathfrak{g}^{\mathbbm{C}}$ is a Borel subalgebra.
\end{definition}
Associated to the decomposition given in Eq. (\ref{triangular}) we can define a Borel subalgebra by setting $\mathfrak{b}(\Delta) := \mathfrak{h} \oplus \mathfrak{n}^{+}$. In this last case, $\mathfrak{b}(\Delta)$ is called standard Borel subalgebra relative to $\mathfrak{h}$. Now we have the following result (see for instance \cite{Flagvarieties}, \cite{HumphreysLAG}, \cite{Humphreys}):
\begin{theorem}
\label{Borelconjugate}
Any two Borel subalgebras (subgroups) are conjugate.
\end{theorem}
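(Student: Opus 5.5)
The statement is Theorem \ref{Borelconjugate}: any two Borel subalgebras (subgroups) are conjugate. We treat the Lie algebra case first and deduce the group case from it.

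\textbf{Lie algebra case: fixed point argument.} The classical route is a fixed point argument on the flag variety. Let $\mathfrak{b}_{1},\mathfrak{b}_{2} \subset \mathfrak{g}^{\mathbbm{C}}$ be Borel subalgebras and let $B_{1}$ be the connected subgroup of $G^{\mathbbm{C}}$ with Lie algebra $\mathfrak{b}_{1}$. First we show that $B_{1}$ is closed, because it is the identity component of $N_{G^{\mathbbm{C}}}(\mathfrak{b}_{1})$. This uses the fact that a Borel subalgebra is self-normalizing, which follows from maximal solvability. Next we consider the orbit $G^{\mathbbm{C}}\cdot \mathfrak{b}_{1}$ in the Grassmannian of $\dim\mathfrak{b}_{1}$-planes in $\mathfrak{g}^{\mathbbm{C}}$. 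We show that this orbit is closed, hence a projective variety $G^{\mathbbm{C}}/N(\mathfrak{b}_{1})$.

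\textbf{Key step: closedness of the orbit.} I expect this to be the main obstacle. I would prove it by choosing the orbit of minimal dimension inside the closure of $G^{\mathbbm{C}}\cdot\mathfrak{b}_{1}$. By orbit closure theory this minimal orbit is closed, so its stabilizer $H$ is parabolic. Applying Lie's theorem to $B_{1}$ acting on this projective orbit gives a fixed point, so $B_{1}$ is conjugate into $H$. Maximality of $\mathfrak{b}_{1}$ among solvable subalgebras then forces $G^{\mathbbm{C}}/B_{1}$ itself to be projective. Alternatively, one can use the complete flag variety of a faithful representation, which is projective. There $B_{1}$ is the stabilizer of a full flag because of Lie's theorem and maximality.

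\textbf{Conclusion of the algebra case.} Now let $B_{2}$ be the connected subgroup with Lie algebra $\mathfrak{b}_{2}$. It acts on the projective variety $G^{\mathbbm{C}}/B_{1}$, and since $B_{2}$ is connected and solvable, the Borel fixed point theorem gives a fixed point $gB_{1}$. Then $B_{2} \subset gB_{1}g^{-1}$, so $\mathfrak{b}_{2} \subset {\rm{Ad}}(g)\mathfrak{b}_{1}$. The right-hand side is solvable, so maximality of $\mathfrak{b}_{2}$ gives equality. I would prove the fixed point theorem by induction on $\dim B_{2}$ using the derived subgroup, together with the fact that closed orbits of minimal dimension are complete. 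The base case is that a connected abelian group acting on a projective variety has a fixed point: a closed orbit is both complete and affine, hence a point.

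\textbf{Group case.} If $B, B'$ are Borel subgroups, their Lie algebras are conjugate by ${\rm{Ad}}(g)$. Then $B'^{\circ} = gB^{\circ}g^{-1}$. To conclude $B' = gBg^{-1}$, we need Borel subgroups to be connected. This follows from the standard fact $N_{G}(B^{\circ}) = B^{\circ}$, applied after identifying both groups inside the normalizer of the common Lie algebra.

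Since the paper treats this as a standard result with references, it is equally acceptable to cite \cite{Humphreys} or \cite{HumphreysLAG} directly for these steps.
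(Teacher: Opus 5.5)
The paper gives no proof of this statement; it only cites \cite{Flagvarieties}, \cite{HumphreysLAG}, \cite{Humphreys}. Your outline follows the algebraic-group proof in \cite{HumphreysLAG}: projectivity of the flag variety, Borel's fixed point theorem, then maximality. Your fixed-point induction and your treatment of the group case via $N_{G^{\mathbbm{C}}}(B^{\circ})=B^{\circ}$ are fine. The gap is in the step you yourself flag.

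First, Lie's theorem does not give a $B_{1}$-fixed point on a closed $B_{1}$-stable subvariety $Y\subset\mathbbm{P}(V)$. It gives a fixed point of $\mathbbm{P}(V)$, which need not lie on $Y$. What you need is Borel's fixed point theorem, which uses completeness of $Y$.

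More seriously, knowing $B_{1}\subset gHg^{-1}$ with $H=N_{G^{\mathbbm{C}}}(\mathfrak{b}')$ gives nothing from maximality among solvable subalgebras, because $H$ need not be solvable. For example, $N_{\mathfrak{g}^{\mathbbm{C}}}(\mathfrak{u}_{P}^{+})\supseteq\mathfrak{p}_{I}$, which is not solvable for $I\neq\emptyset$. The missing idea is the following chain of observations:
\begin{enumerate}
\item Being a $d$-dimensional solvable subalgebra, with $d=\dim\mathfrak{b}_{1}$, is a Zariski-closed condition on the Grassmannian: $[W,W]\subset W$ and vanishing of the $d$-th derived term are polynomial conditions.
\item Hence the fixed point $\mathfrak{c}={\rm Ad}(g)\mathfrak{b}'\in\overline{G^{\mathbbm{C}}\cdot\mathfrak{b}_{1}}$ is a solvable subalgebra of dimension $d$, and $[\mathfrak{b}_{1},\mathfrak{c}]\subset\mathfrak{c}$ because $B_{1}$ fixes $\mathfrak{c}$.
\item Then $\mathfrak{b}_{1}+\mathfrak{c}$ is a subalgebra with solvable ideal $\mathfrak{c}$ and solvable quotient $\mathfrak{b}_{1}/(\mathfrak{b}_{1}\cap\mathfrak{c})$, so it is solvable.
\item Maximality gives $\mathfrak{c}\subset\mathfrak{b}_{1}$, and the dimension count gives $\mathfrak{c}=\mathfrak{b}_{1}$.
\end{enumerate}
So the minimal orbit is $G^{\mathbbm{C}}\cdot\mathfrak{b}_{1}$ itself, which is therefore closed.

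Your alternative via full flags has the analogous hole. Every flag stabilizer is solvable, so $G^{\mathbbm{C}}\cdot f$ has minimal dimension, and is therefore closed, only if $\dim B_{1}$ is maximal among connected solvable subgroups. For a merely inclusion-maximal $\mathfrak{b}_{1}$ this is not known beforehand; it is a consequence of the theorem. This is why \cite{HumphreysLAG} first takes a Borel subgroup of maximal dimension, proves $G/B$ projective for it, and only then conjugates an arbitrary Borel subgroup into it.

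Finally, your Grassmannian argument gives projectivity of $G^{\mathbbm{C}}/N(\mathfrak{b}_{1})$, not of $G^{\mathbbm{C}}/B_{1}$. You can fix this in either of two ways:
\begin{enumerate}
\item Note that $G^{\mathbbm{C}}/B_{1}\to G^{\mathbbm{C}}/N(\mathfrak{b}_{1})$ is finite, so $G^{\mathbbm{C}}/B_{1}$ is complete.
\item Apply the fixed point theorem on $G^{\mathbbm{C}}/N(\mathfrak{b}_{1})$ directly and conclude by self-normalization: $\mathfrak{b}_{2}\subset N_{\mathfrak{g}^{\mathbbm{C}}}({\rm Ad}(g)\mathfrak{b}_{1})={\rm Ad}(g)\mathfrak{b}_{1}$.
\end{enumerate}
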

From the result above, given a Borel subgroup $B \subset G^{\mathbbm{C}}$, up to conjugation, we can always suppose that $B = \exp(\mathfrak{b})$, with $\mathfrak{b} = \mathfrak{b}(\Delta)$.

 Let us recall some basic facts about the representation theory of complex semisimple Lie algebras, a detailed exposition of which can be found in \cite{Humphreys}. By keeping the previous notation, for every $\alpha \in \Phi$, we set 
\begin{equation}
\alpha^{\vee} := \frac{2}{\langle \alpha, \alpha \rangle}\alpha.
\end{equation}
\begin{remark}
In what follows, $\forall \phi,\psi \in \mathfrak{h}^{\ast}$, we denote 
\begin{equation}
\langle \phi, \psi \rangle = \kappa (t_{\phi},t_{\psi}), 
\end{equation}
where $t_{\phi},t_{\psi} \in  \mathfrak{h}$ are, respectively, the dual of $\phi$ and $\psi$ with respect to the Killing form $\kappa$.
\end{remark}
In this setting, the fundamental weights $\{\varpi_{\alpha} \ | \ \alpha \in \Delta\} \subset \mathfrak{h}^{\ast}$ of $(\mathfrak{g}^{\mathbbm{C}},\mathfrak{h})$ are defined by requiring that $\langle \varpi_{\alpha}, \beta^{\vee} \rangle= \delta_{\alpha \beta}$, $\forall \alpha, \beta \in \Delta$. We denote by 
\begin{equation}
\Lambda = \bigoplus_{\alpha \in \Delta}\mathbbm{Z}\varpi_{\alpha}, \ \ \ \Lambda^{+} = \bigoplus_{\alpha \in \Delta}\mathbbm{Z}_{\geq 0}\varpi_{\alpha}, \ \ \ \Lambda^{++} = \bigoplus_{\alpha \in \Delta}\mathbbm{Z}_{> 0}\varpi_{\alpha},
\end{equation}
respectively, the set of weights, the set of integral dominant weights, and the set of strongly dominant weights of $\mathfrak{g}^{\mathbbm{C}}$. From above, given $\alpha \in \Delta$, it follows that 
\begin{equation}
\label{Cartanchange}
\alpha = \sum_{\beta \in \Delta} C_{\alpha \beta}\varpi_{\beta}, \ \ {\text{s.t.}} \ \ C_{\alpha \beta} = \langle \alpha,\beta^{\vee} \rangle \in \mathbbm{Z}, \ \forall \alpha, \beta \in \Delta.
\end{equation}
In other words, the Cartan matrix $C = (C_{\alpha \beta})$ of $\mathfrak{g}^{\mathbbm{C}}$ expresses the change of basis defined by simple roots and fundamental weights. Let $\varrho \colon \mathfrak{g}^{\mathbbm{C}} \to \mathfrak{gl}(V)$ be an arbitrary finite-dimensional $\mathfrak{g}^{\mathbbm{C}}$-module. By considering its weight space decomposition
\begin{center}
$\displaystyle{V = \bigoplus_{\mu \in \Phi(V)}V_{\mu}},$ \ \ \ \ 
\end{center}
such that $V_{\mu} = \{v \in V \ | \ \varrho(h)v = \mu(h)v, \ \forall h \in \mathfrak{h}\} \neq \{0\}$, $\forall \mu \in \Phi(V) \subset \mathfrak{h}^{\ast}$, we have the following definition.

\begin{definition}
\label{hightweightdef}
A highest weight vector (of weight $\lambda$) in a $\mathfrak{g}^{\mathbbm{C}}$-module $V$ is a non-zero vector $v_{\lambda}^{+} \in V_{\lambda}$, such that 
\begin{center}
$\varrho(x)v_{\lambda}^{+} = 0$, \ \ \ \ \ ($\forall x \in \mathfrak{n}^{+}$).
\end{center}
A weight $\lambda \in \Phi(V)$ associated with a highest weight vector is called highest weight of $V$.
\end{definition}

From above, we consider the following Cartan's highest weight theory results (e.g. \cite{Humphreys}):
\begin{enumerate}

\item[(A)] Every finite-dimensional irreducible $\mathfrak{g}^{\mathbbm{C}}$-module $V$ admits a highest weight vector $v_{\lambda}^{+}$. Moreover, $v_{\lambda}^{+}$ is the unique highest weight vector of $V$, up to non-zero scalar multiples.

\item[(B)] Let $V$ and $W$ be finite-dimensional irreducible $\mathfrak{g}^{\mathbbm{C}}$-modules with highest weight $\lambda \in \mathfrak{h}^{\ast}$. Then, $V$ and $W$ are isomorphic. 

\item[(C)] In the above setting, the following hold:
 
\begin{itemize}
\item[(C1)] If $V$ is an irreducible finite-dimensional  $\mathfrak{g}^{\mathbbm{C}}$-module with highest weight $\lambda \in \mathfrak{h}^{\ast}$, then $\lambda \in \Lambda^{+}$.

\item[(C2)] If $\lambda \in \Lambda^{+}$, then there exists an irreducible finite-dimensional  $\mathfrak{g}^{\mathbbm{C}}$-module $V$, such that $V = V(\lambda)$. 
\end{itemize}

\end{enumerate}
From item (C), it follows that the map $\lambda \mapsto V(\lambda)$ induces a one-to-one correspondence between $\Lambda^{+}$ and the set of isomorphism classes of finite-dimensional irreducible $\mathfrak{g}^{\mathbbm{C}}$-modules.

\begin{remark}
Given $\varpi_{\alpha} \in \Lambda^{+}$, we denote by $\Pi_{\varpi_{\alpha}} \colon G^{\mathbbm{C}} \to {\rm{GL}}(V(\varpi_{\alpha}))$ the associated irreducible representation.
\end{remark}

\subsection*{Structure of Parabolic Lie Groups}

Consider now the following definition.
\begin{definition}
A Lie subalgebra $\mathfrak{p} \subset \mathfrak{g}^{\mathbbm{C}}$ is called parabolic if $\mathfrak{p}$ contains some Borel subalgebra. A Lie subgroup $P \subset G^{\mathbbm{C}}$ is called parabolic if $P$ contains some Borel subgroup.
\end{definition}
Given $I \subset \Delta$, we can construct a parabolic Lie subalgebra by setting 
\begin{equation}
\mathfrak{p}_{I} := \mathfrak{b}(\Delta)\oplus \Big ( \sum_{\alpha \in \langle I \rangle^{-}} \mathfrak{g}_{\alpha}\Big ),
\end{equation}
here we denote
\begin{equation}
\langle I \rangle^{\pm} = \mathbbm{Z}I \cap \Phi^{\pm}, \ \ \ \ \langle I \rangle = \langle I \rangle^{+} \cup \langle I \rangle^{-}, \ \ \ \ \text{and} \ \ \ \ \Phi_{I}^{\pm} = \Phi^{\pm} \backslash \langle I \rangle^{\pm}.
\end{equation}
A Lie subalgebra as above is called standard parabolic Lie subalgebra relative to $\Delta$. Let $P_{I} \subset G^{\mathbbm{C}}$ be the Lie subgroup, such that ${\rm{Lie}}(P_{I}) = \mathfrak{p}_{I}$. By construction, $P_{I}$ is a parabolic Lie subgroup. In this last setting, we can show that $P_{I} = N_{G^{\mathbbm{C}}}(\mathfrak{p}_{I})$, where $N_{G^{\mathbbm{C}}}(\mathfrak{p}_{I})$ is the normalizer in  $G^{\mathbbm{C}}$ of $\mathfrak{p}_{I} \subset \mathfrak{g}^{\mathbbm{C}}$, see for instance \cite[\S 3.1]{Akhiezer}. From Theorem \ref{Borelconjugate}, we have the following result.
\begin{theorem}
Every parabolic Lie subalgebra is conjugate to one and only one standard parabolic Lie subalgebra $\mathfrak{p}_{I}$ relative to $\Delta$, for some $I \subset \Delta$.
\end{theorem}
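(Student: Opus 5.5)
The plan is to reduce the statement to Theorem \ref{Borelconjugate} and then prove uniqueness by a Weyl group argument.

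\emph{Existence.} Let $\mathfrak{p} \subset \mathfrak{g}^{\mathbbm{C}}$ be parabolic. By definition it contains a Borel subalgebra $\mathfrak{b}'$. Theorem \ref{Borelconjugate} gives $g \in G^{\mathbbm{C}}$ with ${\rm{Ad}}(g)\mathfrak{b}' = \mathfrak{b}(\Delta)$. Replacing $\mathfrak{p}$ by ${\rm{Ad}}(g)\mathfrak{p}$, we may therefore assume $\mathfrak{b}(\Delta) \subset \mathfrak{p}$. In particular $\mathfrak{p}$ is stable under ${\rm{ad}}(\mathfrak{h})$, so it is a sum of $\mathfrak{h}$ and root spaces:
\begin{equation*}
\mathfrak{p} = \mathfrak{b}(\Delta) \oplus \sum_{\alpha \in S}\mathfrak{g}_{\alpha}, \qquad S \subset \Phi^{-}.
\end{equation*}
Set $I = \{\alpha \in \Delta \ | \ -\alpha \in S\}$. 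The claim is that $S = \langle I \rangle^{-}$.

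For $S \subset \langle I \rangle^{-}$, take $-\gamma \in S$ and write $\gamma = \sum_{\alpha\in\Delta} n_{\alpha}\alpha$ with $n_\alpha \geq 0$. Since $\mathfrak{p}$ is a subalgebra containing $\mathfrak{n}^{+}$, we have $[\mathfrak{g}_{\alpha},\mathfrak{g}_{-\gamma}] = \mathfrak{g}_{\alpha-\gamma}$ whenever $\alpha - \gamma$ is a root. One can always choose a simple $\alpha$ such that $\gamma-\alpha$ is a positive root or zero. Inducting on the height of $\gamma$ then shows that every simple root occurring in $\gamma$ has its negative in $S$, i.e.\ lies in $I$.

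For $\langle I \rangle^{-} \subset S$, note that $\mathfrak{g}_{-\alpha} \subset \mathfrak{p}$ for every $\alpha \in I$. Every root of $\langle I\rangle^{-}$ is obtained from $\{-\alpha \ | \ \alpha \in I\}$ by iterated brackets within the sub-root system generated by $I$, using the standard fact that $[\mathfrak{g}_\alpha,\mathfrak{g}_\beta]=\mathfrak{g}_{\alpha+\beta}$ when $\alpha+\beta$ is a root. Closure under brackets then gives the inclusion. Hence $\mathfrak{p} = \mathfrak{p}_{I}$.

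\emph{Uniqueness.} Suppose ${\rm{Ad}}(g)\mathfrak{p}_{I} = \mathfrak{p}_{J}$. Since both contain $\mathfrak{b}(\Delta)$, the subalgebra $\mathfrak{p}_J$ contains the Borel subalgebras $\mathfrak{b}(\Delta)$ and ${\rm{Ad}}(g)\mathfrak{b}(\Delta)$. Applying Borel conjugacy inside the connected group $P_J = N_{G^{\mathbbm{C}}}(\mathfrak{p}_J)$, we find $p \in P_J$ with ${\rm{Ad}}(pg)\mathfrak{b}(\Delta) = \mathfrak{b}(\Delta)$. Since a Borel subgroup is self-normalizing, $pg \in B$. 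Thus ${\rm{Ad}}(pg)$ preserves $\mathfrak{p}_I$, which gives $\mathfrak{p}_I = {\rm{Ad}}(p^{-1})\mathfrak{p}_J = \mathfrak{p}_J$. Comparing the root spaces $\mathfrak{g}_{-\alpha}$ for $\alpha\in\Delta$ yields $I = J$.

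The main obstacle is the uniqueness step. It needs the self-normalizing property $N_{G^{\mathbbm{C}}}(\mathfrak{b}) = B$ and the connectedness of $P_J$, and I would simply cite these from \cite{Humphreys,Akhiezer}. A fallback is to use the Bruhat decomposition $G^{\mathbbm{C}} = \bigsqcup_{w} BwB$, reduce to $g = w \in W$, and check that $w$ permuting $\Phi^+\cup\langle I\rangle^-$ onto $\Phi^+\cup\langle J\rangle^-$ forces $I = J$ by a dimension and simple-root count.
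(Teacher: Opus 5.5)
The paper gives no argument of its own for this statement; it just asserts it as a consequence of Theorem \ref{Borelconjugate} and defers to the standard references. Your proposal is the standard proof along those lines, and it is correct up to one step that should be tightened. The structure is: conjugate a Borel subalgebra into $\mathfrak{p}$, classify the subalgebras containing $\mathfrak{b}(\Delta)$, and get uniqueness from conjugacy of Borel subgroups inside $P_J$ together with $N_{G^{\mathbbm{C}}}(B)=B$.

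\textbf{The step to fix.} In the inclusion $S\subset\langle I\rangle^{-}$, the bracket $[\mathfrak{g}_{\alpha},\mathfrak{g}_{-\gamma}]=\mathfrak{g}_{-(\gamma-\alpha)}$ only gives $-(\gamma-\alpha)\in S$. The induction hypothesis therefore only covers the simple roots in the support of $\gamma-\alpha$. If $\alpha$ occurs in $\gamma$ with coefficient one, it is not in that support, and your induction as written never shows $\alpha\in I$. The fix is one more bracket. Since $\gamma-\alpha\in\Phi^{+}$, we have $\mathfrak{g}_{\gamma-\alpha}\subset\mathfrak{n}^{+}\subset\mathfrak{p}$, and $[\mathfrak{g}_{\gamma-\alpha},\mathfrak{g}_{-\gamma}]=\mathfrak{g}_{-\alpha}$. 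So $-\alpha\in S$, i.e.\ $\alpha\in I$, directly. Together with the induction applied to $\gamma-\alpha$, this covers the whole support of $\gamma$.

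\textbf{The Bruhat fallback.} This route would not close by a dimension and simple-root count alone. For example, $I=\{\alpha_{1}\}$ and $J=\{\alpha_{2}\}$ in type $A_{2}$ agree in both counts but are not conjugate. Instead you would argue as follows. From $w(\Phi^{+}\cup\langle I\rangle^{-})=\Phi^{+}\cup\langle J\rangle^{-}$, the map $w$ sends $\Phi_{I}^{+}$ onto $\Phi_{J}^{+}$. Hence it sends the dominant weight $\delta_{P_I}$ to the dominant weight $\delta_{P_J}$, which forces $\delta_{P_I}=\delta_{P_J}$. Therefore $I=\{\alpha\in\Delta \mid \langle\delta_{P_I},\alpha^{\vee}\rangle=0\}=J$. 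Your main uniqueness argument via self-normalizing Borel subgroups does not need this fallback.
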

From the result above, given a parabolic subgroup $P \subset G^{\mathbbm{C}}$, up to conjugation, we can always suppose that $P = P_{I}$, for some $I \subset \Delta$. From this, we observe that 
\begin{equation}
\label{Leviparabolic}
\mathfrak{p}_{I} = \underbrace{\Big ( \mathfrak{h} \oplus \sum_{\alpha \in \langle I \rangle }\mathfrak{g}_{\alpha}\Big)}_{\mathfrak{l}_{P}} \oplus \underbrace{\Big ( \sum_{\alpha \in \Phi_{I}^{+} }\mathfrak{g}_{\alpha}\Big )}_{\mathfrak{u}_{P}^{+}}.
\end{equation}
In the above decomposition (direct sum of vector spaces), $\mathfrak{l}_{P}$ is called the Levi factor and $\mathfrak{u}_{P}^{+}$ is called the nilpotent radical (maximal nilpotent ideal) of $\mathfrak{p}_{I}$. Notice that $\mathfrak{l}_{P}$ is a reductive Lie subalgebra, thus 
\begin{equation}
\mathfrak{l}_{P} =  \mathfrak{s}_{P}\oplus \mathfrak{z}(\mathfrak{l}_{P}), 
\end{equation}
where $\mathfrak{s}_{P}:= [\mathfrak{l}_{P},\mathfrak{l}_{P}]$ is semisimple and $\mathfrak{z}(\mathfrak{l}_{P})$ is the center of $\mathfrak{l}_{P}$, such that
\begin{equation}
\mathfrak{z}(\mathfrak{l}_{P}) = \bigcap_{\alpha\in I}\ker(\alpha),
\end{equation}
see for instance \cite[p. 68]{Akhiezer}. At the level of Lie groups, we have the Levi decomposition
\begin{equation}
\label{Levidecparabolic}
P_{I} = L_{P}R_{u}(P_{I})^{+} = L_{P} \ltimes R_{u}(P_{I})^{+},
\end{equation}
such that $L_{P} = S_{P}Z(L_{P})^{0}$, where $Z(L_{P})^{0}:= \exp(\mathfrak{z}(\mathfrak{l}_{P}))$ and $S_{P}= [L_{P},L_{P}]$. Further, since $ R_{u}(P_{I})^{+} \subset [P_{I},P_{I}]$, we obtain
\begin{equation}
\label{commutatordec}
P_{I} = [P_{I},P_{I}]Z(L_{P})^{0}.
\end{equation}
Further, denoting
\begin{equation}
T^{\mathbbm{C}}_{I} := \exp \bigg \{ \sum_{\alpha \in I}a_{\alpha}h_{\alpha} \ \bigg | \ a_{\alpha} \in \mathbbm{C}, \ \forall \alpha \in I\bigg\},
\end{equation}
it follows that the maximal torus $T^{\mathbbm{C}} := \exp(\mathfrak{h})$ of $G^{\mathbbm{C}}$ decomposes as follows
\begin{equation}
T^{\mathbbm{C}} = T^{\mathbbm{C}}_{I} Z(L_{P})^{0}.
\end{equation}
For more details on parabolic Lie groups, see for instance \cite[Part II]{jantzen2003representations}, \cite{HumphreysLAG}.\\

Given a parabolic Lie subgroup $P \subset G^{\mathbbm{C}}$, let $(L_{P},T^{\mathbbm{C}})$ be the split reductive group defined by its Levi component $L_{P} \subset P$. Considering the root datum $(\mathbbm{X}(T^{\mathbbm{C}}),\langle I \rangle, \alpha \mapsto \alpha^{\vee})$ of $(L_{P},T^{\mathbbm{C}})$, such that 
\begin{equation}
\label{charactertrosuabelian}
\mathbbm{X}(T^{\mathbbm{C}}) = {\rm{Hom}}(T^{\mathbbm{C}},\mathbbm{C}^{\times}) \cong \Lambda = \bigoplus_{\alpha \in \Delta} \mathbbm{Z}\varpi_{\alpha},
\end{equation}
is the character group of $T^{\mathbbm{C}}$, we can show that
\begin{equation}
\label{characterparabolic}
{\rm{Hom}}(P_{I},\mathbbm{C}^{\times})  \cong {\rm{Hom}}(L_{P},\mathbbm{C}^{\times})  \cong \Big \{ \vartheta \in \mathbbm{X}(T^{\mathbbm{C}}) \ \Big | \ \langle \vartheta, \alpha^{\vee}\rangle = 0, \ \ \forall \alpha \in I\Big \}.
\end{equation}
The isomorphism above is obtained from the restriction homomorphism
\begin{equation}
{\rm{res}} \colon {\rm{Hom}}(P_{I},\mathbbm{C}^{\times}) \to \mathbbm{X}(T^{\mathbbm{C}}), \ \ \chi \mapsto \chi|_{T^{\mathbbm{C}}}, 
\end{equation}
that is, ${\rm{Hom}}(P_{I},\mathbbm{C}^{\times}) \cong {\rm{Im}}({\rm{res}})$. For more details, see for instance \cite[Part II, p. 169]{jantzen2003representations}. 

\begin{remark}
From now on, we will not distinguish the isomorphic abelian groups described in Eq. (\ref{charactertrosuabelian}) and Eq. (\ref{characterparabolic}).
\end{remark}

\section{Rational Homogeneous Varieties}
\label{generalities}

In this section, we present some basic generalities on flag varieties. For more details on this subject, we suggest \cite{Akhiezer}, \cite{Flagvarieties}, \cite{HumphreysLAG}, \cite{BorelRemmert}, \cite{correa2026deformed}.

\subsection*{Generalities on Flag Varieties}

\begin{definition}
A complex flag variety $X$ is a compact simply connected homogeneous complex manifold defined by
\begin{equation}
X_{P} := G^{\mathbbm{C}}/P = G/G \cap P,
\end{equation}
where $G^{\mathbbm{C}}$ is a complex simple Lie group with compact real form given by $G$, and $P \subset G^{\mathbbm{C}}$ is a parabolic Lie subgroup. 
\end{definition}

Let us describe the Picard group of a flag variety $X_{P}$ defined by some parabolic Lie subgroup $P \subset G^{\mathbbm{C}}$. By choosing a trivializing open covering $X_{P} = \bigcup_{i \in J}U_{i}$, in terms of $\check{C}$ech cocycles we can write 
\begin{center}
$G^{\mathbbm{C}} = \Big \{(U_{i})_{i \in J}, \psi_{ij} \colon U_{i} \cap U_{j} \to P \Big \}$.
\end{center}
Since $P = P_{I}$, considering the characterization
\begin{equation}
{\rm{Hom}}(P_{I},\mathbbm{C}^{\times}) = \Big \{ \vartheta \in \mathbbm{X}(T^{\mathbbm{C}}) \ \Big | \ \langle \vartheta ,\alpha^{\vee} \rangle = 0, \ \ \forall \alpha \in I\Big \},
\end{equation}
for every $\alpha \in \Delta \backslash I$, from the homomorphism $\vartheta_{\varpi_{\alpha}} \colon P \to \mathbbm{C}^{\times}$, satisfying $(d\vartheta_{\varpi_{\alpha}})_{e} = \varpi_{\alpha}$, one can equip $\mathbbm{C}$ with the structure of a $P$-space, such that $pz = \vartheta_{\varpi_{\alpha}}(p)^{-1}z$, $\forall p \in P$, and $\forall z \in \mathbbm{C}$. Denoting by $\mathbbm{C}_{-\varpi_{\alpha}}$ this $P$-space, we can form an associated holomorphic line bundle $\mathscr{O}_{\alpha}(1) = G^{\mathbbm{C}} \times_{P}\mathbbm{C}_{-\varpi_{\alpha}}$, which can be described in terms of $\check{C}$ech cocycles by
\begin{equation}
\label{linecocycle}
\mathscr{O}_{\alpha}(1) = \Big \{(U_{i})_{i \in J},\vartheta_{\varpi_{\alpha}}^{-1} \circ \psi_{i j} \colon U_{i} \cap U_{j} \to \mathbbm{C}^{\times} \Big \},
\end{equation}
that is, $\mathscr{O}_{\alpha}(1) = \{g_{ij}\} \in \check{H}^{1}(X_{P},\mathcal{O}_{X_{P}}^{\ast})$, such that $g_{ij} = \vartheta_{\varpi_{\alpha}}^{-1} \circ \psi_{i j}$, $\forall i,j \in J$. 

Given $\mathscr{O}_{\alpha}(1) \in {\text{Pic}}(X_{P})$, such that $\alpha \in \Delta \backslash I$, as described above, if we consider an open covering $X_{P} = \bigcup_{i \in J} U_{i}$ which trivializes both $P \hookrightarrow G^{\mathbbm{C}} \to X_{P}$ and $ \mathscr{O}_{\alpha}(1) \to X_{P}$, by taking a collection of local sections $(s_{i})_{i \in J}$, such that $s_{i} \colon U_{i} \to G^{\mathbbm{C}}$, we can define $q_{i} \colon U_{i} \to \mathbbm{R}^{+}$, such that 
\begin{equation}
\label{functionshermitian}
q_{i}(x) := \frac{1}{||\Pi_{\varpi_{\alpha}}(s_{i}(x))v_{\varpi_{\alpha}}^{+}||^{2}},
\end{equation}
for every $i \in J$. Since $s_{j} = s_{i}\psi_{ij}$ on $U_{i} \cap U_{j} \neq \emptyset$, and $pv_{\varpi_{\alpha}}^{+} = \vartheta_{\varpi_{\alpha}}(p)v_{\varpi_{\alpha}}^{+}$, for every $p \in P$, and every $\alpha \in \Delta \backslash I$, the collection of functions $(q_{i})_{i \in J}$ satisfies $q_{j} = |\vartheta_{\varpi_{\alpha}}^{-1} \circ \psi_{ij}|^{2}q_{i}$ on $U_{i} \cap U_{j} \neq \emptyset$. Hence, we obtain a collection of functions $(q_{i})_{i \in J}$ which satisfies on the overlaps $U_{i} \cap U_{j} \neq \emptyset$ the following relation
\begin{equation}
\label{collectionofequ}
q_{j} = |g_{ij}|^{2}q_{i},
\end{equation}
such that $g_{ij} = \vartheta_{\varpi_{\alpha}}^{-1} \circ \psi_{i j}$, $\forall i,j \in J$. From this, we can define a Hermitian structure ${\bf{h}}$ on $\mathscr{O}_{\alpha}(1)$ by taking on each trivialization $f_{i} \colon \mathscr{O}_{\alpha}(1)|_{U_{i}} \to U_{i} \times \mathbbm{C}$ the metric defined by
\begin{equation}
\label{hermitian}
{\bf{h}}(f_{i}^{-1}(x,v),f_{i}^{-1}(x,w)) = q_{i}(x) v\overline{w},
\end{equation}
for every $(x,v),(x,w) \in U_{i} \times \mathbbm{C}$. The Hermitian metric above induces a Chern connection $\nabla \myeq {\rm{d}} + \partial \log {\bf{h}}$ with curvature $F_{\nabla}$ satisfying (locally)
\begin{equation}
\displaystyle \frac{\sqrt{-1}}{2\pi}F_{\nabla} \myeq \frac{\sqrt{-1}}{2\pi} \partial \overline{\partial}\log \Big ( \big | \big | (\Pi_{\varpi_{\alpha}} \circ s_{i})v_{\varpi_{\alpha}}^{+}\big | \big |^{2} \Big).
\end{equation}
Therefore, by considering the closed $G$-invariant $(1,1)$-form ${\bf{\Omega}}_{\alpha} \in \Omega^{1,1}(X_{P})^{G}$, which satisfies $\pi^{\ast}{\bf{\Omega}}_{\alpha} = \sqrt{-1}\partial \overline{\partial} \varphi_{\varpi_{\alpha}}$, where $\pi \colon G^{\mathbbm{C}} \to G^{\mathbbm{C}} / P = X_{P}$, and $\varphi_{\varpi_{\alpha}}(g) := \frac{1}{2\pi}\log \big (||\Pi_{\varpi_{\alpha}}(g)v_{\varpi_{\alpha}}^{+}||^{2} \big)$, for every $g \in G^{\mathbbm{C}}$, we have 
\begin{equation}
{\bf{\Omega}}_{\alpha} |_{U_{i}} = (\pi \circ s_{i})^{\ast}{\bf{\Omega}}_{\alpha} = \frac{\sqrt{-1}}{2\pi}F_{\nabla} \Big |_{U_{i}},
\end{equation}
i.e., $c_{1}(\mathscr{O}_{\alpha}(1)) = [ {\bf{\Omega}}_{\alpha}]$, $\forall \alpha \in \Delta \backslash I$. From the above construction, the following result summarizes the main properties to be considered in this work about invertible coherent sheaves and real $G$-invariant $(1,1)$-forms on flag varieties.
\begin{theorem}
\label{AZADBISWAS}
Given a flag variety $X_{P} = G^{\mathbbm{C}}/P$, such that $P = P_{I}$, for some $I \subset \Delta$, then the following hold:
\begin{enumerate}
\item[(1)] As an abelian group, the Picard group of $X_{P}$ is generated by $\mathscr{O}_{\alpha}(1), \alpha \in \Delta \backslash I$, i.e.,
\begin{equation}
{\rm{Pic}}(X_{P}) = \big \langle \mathscr{O}_{\alpha}(1) \ | \ \alpha \in \Delta \backslash I \big \rangle_{\mathbbm{Z}}.
\end{equation}
\item[(2)] $H^{2}(X_{P},\mathbbm{Z}) = \bigoplus_{\alpha \in \Delta \backslash I}\mathbbm{Z}[{\bf{\Omega}}_{\alpha}]$, such that $c_{1}(\mathscr{O}_{\alpha}(1)) = [{\bf{\Omega}}_{\alpha}], \forall \alpha \in \Delta \backslash I$.
\item[(3)] $\forall \alpha \in \Delta \backslash I$, we have $\pi^{\ast}{\bf{\Omega}}_{\alpha} = \sqrt{-1}\partial \overline{\partial} \varphi_{\varpi_{\alpha}}$, such that $\varphi_{\varpi_{\alpha}} \colon G^{\mathbbm{C}} \to \mathbbm{R}$ is given by
\begin{equation}
\varphi_{\varpi_{\alpha}}(g)  = \frac{1}{\pi}\log \big (||\Pi_{\varpi_{\alpha}}(g)v_{\varpi_{\alpha}}^{+}|| \big ), \ \ \forall g \in G^{\mathbbm{C}},
\end{equation}
where $\pi \colon G^{\mathbbm{C}} \to G^{\mathbbm{C}} / P = X_{P}$ is the natural projection.
\item[(4)] The K\"{a}hler cone $\mathcal{K}(X_{P})$ of $X_{P}$ is given explicitly by $\mathcal{K}(X_{P}) = \displaystyle \bigoplus_{\alpha \in \Delta \backslash I} \mathbbm{R}_{+}[ {\bf{\Omega}}_{\alpha}]$.
\item[(5)] Consider $\mathbbm{P}_{\beta}^{1} = \overline{\exp(\mathfrak{g}_{-\beta}){o}} \subset X_{P}$, such that $o = eP$ and $\beta \in \Phi_{I}^{+}$. Then, 
\begin{equation}
\int_{\mathbbm{P}_{\beta}^{1}} {\bf{\Omega}}_{\alpha} = \langle \varpi_{\alpha}, \beta^{\vee}  \rangle, \ \forall \alpha \in \Delta \backslash I.
\end{equation}
In particular, the cone of curves ${\rm{NE}}(X_{P})$ is given by ${\rm{NE}}(X_{P}) = \sum_{\alpha \in \Delta \backslash I} \mathbbm{R}_{\geq 0}[\mathbbm{P}_{\alpha}^{1}]$.
\end{enumerate}
\end{theorem}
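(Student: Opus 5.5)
Theorem \ref{AZADBISWAS} collects standard facts, and I would prove its items in the order (3), (1)--(2), (5), (4). Item (3) is built into the construction preceding the statement, so I would start there.

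For item (3), I would check that $\varphi_{\varpi_{\alpha}}$ is well defined modulo pluriharmonic functions on $P$-cosets. Since $\Pi_{\varpi_{\alpha}}(gp)v^{+}_{\varpi_{\alpha}} = \vartheta_{\varpi_{\alpha}}(p)\Pi_{\varpi_{\alpha}}(g)v^{+}_{\varpi_{\alpha}}$ and $\log|\vartheta_{\varpi_{\alpha}}(p)|^{2}$ is pluriharmonic, the form $\sqrt{-1}\partial\bar\partial\varphi_{\varpi_{\alpha}}$ descends to a closed real $(1,1)$-form on $X_{P}$. Its $G$-invariance follows because $\Pi_{\varpi_{\alpha}}|_{G}$ is unitary for a $G$-invariant inner product.

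For items (1) and (2), I would use the exact sequence coming from $H^{1}(X_{P},\mathcal{O}) = H^{2}(X_{P},\mathcal{O}) = 0$. This vanishing holds by Kodaira vanishing, since $X_{P}$ is Fano (or by Bott--Borel--Weil). It gives ${\rm{Pic}}(X_{P}) \cong H^{2}(X_{P},\mathbbm{Z})$ via $c_{1}$. Next, homogeneous line bundles $G^{\mathbbm{C}}\times_{P}\mathbbm{C}_{\vartheta}$ correspond to ${\rm{Hom}}(P,\mathbbm{C}^{\times})$, which by Eq. (\ref{characterparabolic}) is the lattice $\bigoplus_{\alpha\in\Delta\backslash I}\mathbbm{Z}\varpi_{\alpha}$. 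Every line bundle on $X_{P}$ is homogeneous, because the $G^{\mathbbm{C}}$-action linearizes: $G^{\mathbbm{C}}$ is simply connected and semisimple. Hence ${\rm{Pic}}(X_{P})$ is generated by the $\mathscr{O}_{\alpha}(1)$. The identity $c_{1}(\mathscr{O}_{\alpha}(1)) = [{\bf{\Omega}}_{\alpha}]$ was already shown above. Freeness of the classes follows from item (5), since the pairing matrix with the curves $\mathbbm{P}^{1}_{\gamma}$, $\gamma\in\Delta\backslash I$, is the identity.

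For item (5), I would restrict to the $\mathfrak{sl}_{2}$-triple of $\beta$. The curve $\mathbbm{P}^{1}_{\beta}$ is the orbit of $o$ under the corresponding ${\rm{SL}}_{2}$ subgroup, which is $\mathbbm{P}^{1}$ since $\mathfrak{g}_{-\beta}\not\subset\mathfrak{p}_{I}$. Along it, $\Pi_{\varpi_{\alpha}}(\exp(z e_{-\beta}))v^{+} = v^{+} + z\,e_{-\beta}v^{+} + \dots$ spans an irreducible ${\rm{SL}}_{2}$-module of highest weight $\langle\varpi_{\alpha},\beta^{\vee}\rangle = m$. So $\mathscr{O}_{\alpha}(1)|_{\mathbbm{P}^{1}_{\beta}} \cong \mathscr{O}(m)$, and $\int {\bf{\Omega}}_{\alpha} = m$; the normalization $\frac{1}{2\pi}$ in $\varphi$ is exactly what makes the Fubini--Study integral equal $m$. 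The cone of curves statement follows because every $\beta\in\Phi_{I}^{+}$ has $\beta^{\vee}$ a nonnegative combination of simple coroots. Then $[\mathbbm{P}^{1}_{\beta}] = \sum_{\alpha\in\Delta\backslash I} \langle\varpi_{\alpha},\beta^{\vee}\rangle [\mathbbm{P}^{1}_{\alpha}]$ by duality, and $H_{2}$ is generated by Schubert curves.

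For item (4), I would show two inclusions. First, any class $\sum c_{\alpha}[{\bf{\Omega}}_{\alpha}]$ with all $c_{\alpha}>0$ is represented by $\sqrt{-1}\partial\bar\partial\log\|\Pi_{\lambda}(g)v^{+}_{\lambda}\|^{2}$ with $\lambda = \sum c_{\alpha}\varpi_{\alpha}$ (first for integral, then by linearity for real weights). This form is positive: at $o$, its value on $\mathfrak{g}_{-\beta}$ equals a positive multiple of $\langle\lambda,\beta^{\vee}\rangle > 0$ for all $\beta\in\Phi^{+}_{I}$, and $G$-homogeneity propagates this to all of $X_{P}$. Conversely, a Kähler class must integrate positively on each $\mathbbm{P}^{1}_{\alpha}$, and by item (5) this forces $c_{\alpha}>0$. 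The main obstacle is the positivity computation at $o$, i.e., identifying $T_{o}X_{P}\cong\mathfrak{u}^{-}_{P}$ and computing the Hessian of $\log\|\exp(\sum z_{\beta}e_{-\beta})v^{+}\|^{2}$ to second order. I would handle it with the $\mathfrak{sl}_{2}$ norms $\|e_{-\beta}v^{+}\|^{2} = \langle\lambda,\beta^{\vee}\rangle\|v^{+}\|^{2}$, with $e_{\pm\beta}$ normalized as a Chevalley basis.
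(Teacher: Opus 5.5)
The paper does not actually prove this theorem. Item (3) and the identity $c_{1}(\mathscr{O}_{\alpha}(1)) = [{\bf{\Omega}}_{\alpha}]$ come out of the highest-weight metric $q_{i} = \|\Pi_{\varpi_{\alpha}}(s_{i})v^{+}_{\varpi_{\alpha}}\|^{-2}$ constructed just before the statement, and your treatment of (3) is essentially that same local-section argument. Everything else is deferred to Azad--Biswas, the author's earlier paper and Fulton--Woodward.

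Your proposal replaces these citations with standard self-contained arguments:
\begin{itemize}
\item for (1)--(2), the exponential sequence with $H^{1}(X_{P},\mathcal{O}) = H^{2}(X_{P},\mathcal{O}) = 0$, plus linearizability of line bundles for a simply connected semisimple group;
\item for the degrees in (5), restriction to the ${\rm{SL}}_{2}$ of $\beta$, which maps $\mathbbm{P}^{1}_{\beta}$ onto a degree-$\langle\varpi_{\alpha},\beta^{\vee}\rangle$ rational normal curve;
\item for (4), the second-order expansion of $\log\|\exp(\sum_{\beta}z_{\beta}e_{-\beta})v^{+}_{\lambda}\|^{2}$ at $o$, using weight orthogonality and $\|e_{-\beta}v^{+}_{\lambda}\|^{2} = \langle\lambda,\beta^{\vee}\rangle\|v^{+}_{\lambda}\|^{2}$.
\end{itemize}
These steps are correct. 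The payoff is that one computation does double duty. For $\lambda = \sum_{\alpha}c_{\alpha}\varpi_{\alpha}$ it gives $\big(\sum_{\alpha}c_{\alpha}{\bf{\Omega}}_{\alpha}\big)_{o} = \frac{1}{2\pi}\sum_{\beta\in\Phi_{I}^{+}}\langle\lambda,\beta^{\vee}\rangle\sqrt{-1}dz_{\beta}\wedge d\overline{z_{\beta}}$. This yields both the Kähler cone and the eigenvalue formula of Proposition \ref{eigenvalueatorigin}, which the paper also only quotes and which is the real input to Theorem \ref{TheoremA}. The citation route is shorter but hides this mechanism.

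One step does not go through as written: the ``in particular'' clause of (5). The identity $[\mathbbm{P}^{1}_{\beta}] = \sum_{\alpha\in\Delta\backslash I}\langle\varpi_{\alpha},\beta^{\vee}\rangle[\mathbbm{P}^{1}_{\alpha}]$ only concerns the $T$-stable curves through $o$. Knowing that Schubert curves generate $H_{2}(X_{P},\mathbbm{Z})$ as a group says nothing about signs. To get ${\rm{NE}}(X_{P})\subseteq\sum_{\alpha}\mathbbm{R}_{\geq 0}[\mathbbm{P}^{1}_{\alpha}]$ you need $\int_{C}{\bf{\Omega}}_{\alpha}\geq 0$ for every irreducible curve $C$. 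Item (3) supplies this: ${\bf{\Omega}}_{\alpha}$ is the pullback of the Fubini--Study form under $gP\mapsto[\Pi_{\varpi_{\alpha}}(g)v^{+}_{\varpi_{\alpha}}]$, hence semipositive. Equivalently, use your computation at $o$ with $\lambda = \varpi_{\alpha}$ together with $G$-homogeneity. Duality then gives $[C] = \sum_{\alpha}\big(\int_{C}{\bf{\Omega}}_{\alpha}\big)[\mathbbm{P}^{1}_{\alpha}]$ with nonnegative coefficients.

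A minor ordering point: you justify $H^{1}(\mathcal{O}) = H^{2}(\mathcal{O}) = 0$ by Kodaira vanishing ``since $X_{P}$ is Fano''. In the paper, however, Fano-ness is derived after this theorem from (1) and (4). Either rely on Bott--Borel--Weil, or run your positivity computation first with $\lambda = \delta_{P}$. That computation does not use (1)--(2), and $\langle\delta_{P},\beta^{\vee}\rangle > 0$ for all $\beta\in\Phi_{I}^{+}$.
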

The proof of the above theorem follows from \cite{AZAD}, see also \cite{correa2026deformed}, \cite{FultonWoodward}. 
\begin{remark}
\label{powergenerators}
From the previous theorem, for every $\alpha \in \Delta \backslash I$, we denote 
\begin{equation}
\mathscr{O}_{\alpha}(\ell) := \mathscr{O}_{\alpha}(1)^{\otimes \ell}, \ \ \forall \ell \in \mathbbm{Z}.
\end{equation}
Notice that $\mathscr{O}_{\alpha}(0) := \mathcal{O}_{X_{P}}$, for every $\alpha \in \Delta \backslash I$.
\end{remark}
\begin{remark}
\label{harmonic2forms}
Given any $G$-invariant Riemannian metric $g$ on $X_{P}$, denoting by $\mathscr{H}^{2}(X_{P},g)$ the space of real harmonic 2-forms on $X_{P}$ with respect to $g$, then
\begin{equation}
\mathscr{H}^{2}(X_{P},g) = \mathscr{I}_{G}^{1,1}(X_{P}),
\end{equation}
where $\mathscr{I}_{G}^{1,1}(X_{P})$ is the space of closed $G$-invariant real $(1,1)$-forms on $X_{P}$, for more details, see for instance \cite[Lemma 3.1]{MR528871}.
\end{remark}

In the above setting, we denote the weights of $P = P_{I}$ by  
\begin{center}
$\displaystyle \Lambda_{P} := \bigoplus_{\alpha \in \Delta \backslash I}\mathbbm{Z}\varpi_{\alpha}$. 
\end{center}
From this, the previous theorem provides $\Lambda_{P} \cong {\rm{Hom}}(P,\mathbbm{C}^{\times}) \cong {\rm{Pic}}(X_{P})$, such that
\begin{enumerate}
\item$ \displaystyle \lambda = \sum_{\alpha \in \Delta \backslash I}k_{\alpha}\varpi_{\alpha} \mapsto \prod_{\alpha \in \Delta \backslash I} \vartheta_{\varpi_{\alpha}}^{k_{\alpha}} \mapsto \bigotimes_{\alpha \in \Delta \backslash I} \mathscr{O}_{\alpha}(k_{\alpha})$.
\item $ \displaystyle {\bf{E}} \mapsto \vartheta_{{\bf{E}}}: = \prod_{\alpha \in \Delta \backslash I} \vartheta_{\varpi_{\alpha}}^{\langle c_{1}({\bf{E}}),[\mathbbm{P}^{1}_{\alpha}] \rangle} \mapsto \lambda({\bf{E}}) := \sum_{\alpha \in \Delta \backslash I}\langle c_{1}({\bf{E}}),[\mathbbm{P}^{1}_{\alpha}] \rangle\varpi_{\alpha}$.
\end{enumerate}
Thus, $\forall {\bf{E}} \in {\rm{Pic}}(X_{P})$, we have $\lambda({\bf{E}}) \in \Lambda_{P}$. More generally, $\forall \xi \in H^{1,1}(X_{P},\mathbbm{R})$, we can associate $\lambda(\xi) \in \Lambda_{P}\otimes \mathbbm{R}$, such that
\begin{equation}
\label{weightcohomology}
\lambda(\xi) := \sum_{\alpha \in \Delta \backslash I}\langle \xi,[\mathbbm{P}^{1}_{\alpha}] \rangle\varpi_{\alpha}.
\end{equation}

\begin{remark}
From now on, given $I \subset \Delta$, we denote $\Phi_{I}^{\pm}:= \Phi^{\pm} \backslash \langle I \rangle^{\pm}$.
\end{remark}

\begin{remark}
\label{bigcellcosntruction}
From above, if ${\bf{L}} \in {\rm{Pic}}(X_{P})$ is a very ample line bundle, from the Borel-Weil theorem  we have a projective embedding 
\begin{equation}
X_{P} \hookrightarrow \mathbbm{P}(H^{0}(X_{P},{\bf{L}})^{\ast}) \cong  \mathbbm{P}(V(\lambda({\bf{E}})))) 
\end{equation}
It is worth mentioning that on a flag variety $X_{P}$ we have a distinguished neighborhood 
\begin{center}
$o = eP  \in U^{-}(P) \subset X_{P}$
\end{center}
defined as follows
\begin{equation}
\label{bigcell}
 U^{-}(P) =  B^{-}o = R_{u}(P_{I})^{-}o\subset X_{P},  
\end{equation}
 where $B^{-} = \exp(\mathfrak{h} \oplus \mathfrak{n}^{-})$, and
 
 \begin{center}
 
 $R_{u}(P_{I})^{-} = \displaystyle \prod_{\alpha \in \Phi_{I}^{+}}N_{\alpha}^{-}$, \ \ (opposite unipotent radical)
 
 \end{center}
with $N_{\alpha}^{-} = \exp(\mathfrak{g}_{-\alpha})$, $\forall \alpha \in \Phi_{I}^{+}$, e.g. \cite[\S 3]{Lakshmibai2}, \cite[\S 3.1]{Akhiezer}. As we see, the opposite big cell defines a contractible open dense subset in $X_{P}$, thus the restriction of any vector bundle (principal bundle) over this open set is trivial. Furthermore, since $R_{u}(P_{I})^{-}$ is an algebraic group isomorphic to the affine space, we conclude that $X_{P}$ is a rational homogeneous variety.
\end{remark}

Now we consider the following result, see for instance \cite{correa2026deformed}.

\begin{proposition}
\label{eigenvalueatorigin}
Let $X_{P}$ be a flag variety and let $\omega$ be a $G$-invariant K\"{a}hler metric on $X_{P}$. Then, for every closed $G$-invariant real $(1,1)$-form $\theta$, the eigenvalues of the endomorphism $\omega^{-1} \circ \theta$ are given by 
\begin{equation}
\label{eigenvalues}
{\bf{q}}_{\beta}(\omega^{-1} \circ \theta) = \frac{ \langle \lambda([\theta]), \beta^{\vee} \rangle}{\langle \lambda([\omega]), \beta^{\vee} \rangle}, \ \ \beta \in \Phi_{I}^{+},
\end{equation}
such that $\lambda([\theta]), \lambda([\omega]) \in \Lambda_{P} \otimes \mathbbm{R}$.
\end{proposition}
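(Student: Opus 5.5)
The plan is to reduce everything to linear algebra at the base point $o = eP$ and then transport the result by homogeneity. Since $\omega$ and $\theta$ are both $G$-invariant, the endomorphism $\omega^{-1}\circ\theta$ is $G$-equivariant. Its eigenvalues are therefore constant on $X_{P}$, and it suffices to compute them on $T_{o}X_{P} \cong \mathfrak{g}^{\mathbbm{C}}/\mathfrak{p}_{I} \cong \bigoplus_{\beta\in\Phi_{I}^{+}}\mathfrak{g}_{-\beta}$.

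At $o$, both forms are invariant under the isotropy group $G\cap P \supset T$. The root spaces $\mathfrak{g}_{-\beta}$, $\beta\in\Phi_{I}^{+}$, carry pairwise distinct $T$-characters, and a $T$-invariant Hermitian form must kill pairs of vectors whose weights differ. Hence both $\omega_{o}$ and $\theta_{o}$ are diagonal in the root-vector basis $\{E_{-\beta}\}$. Consequently the eigenvalues of $\omega^{-1}\circ\theta$ are the ratios
\[
\frac{\theta_{o}(E_{-\beta},\overline{E_{-\beta}})}{\omega_{o}(E_{-\beta},\overline{E_{-\beta}})}, \qquad \beta\in\Phi_{I}^{+}.
\]
I will make this precise using the identification of $G$-invariant $(1,1)$-forms with $\mathrm{Ad}(G\cap P)$-invariant forms on $\mathfrak{g}^{\mathbbm{C}}/\mathfrak{p}_{I}$.

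The key step, and the main obstacle, is to compute the diagonal coefficients. By Remark \ref{harmonic2forms} and Theorem \ref{AZADBISWAS}(2), $\theta$ is uniquely determined by its class, so
\[
\theta = \sum_{\alpha\in\Delta\backslash I} \langle[\theta],[\mathbbm{P}^{1}_{\alpha}]\rangle\,{\bf{\Omega}}_{\alpha},
\]
and the same holds for $\omega$. By linearity it then suffices to show
\[
{\bf{\Omega}}_{\alpha}(E_{-\beta},\overline{E_{-\beta}}) = c_{\beta}\,\langle\varpi_{\alpha},\beta^{\vee}\rangle,
\]
with a constant $c_{\beta}>0$ independent of $\alpha$. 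I would compute this from the potential of Theorem \ref{AZADBISWAS}(3). Restrict $\varphi_{\varpi_{\alpha}}$ to the curve $z\mapsto \exp(zE_{-\beta})$ and expand $\|\Pi_{\varpi_{\alpha}}(\exp(zE_{-\beta}))v^{+}_{\varpi_{\alpha}}\|^{2} = 1 + |z|^{2}\|E_{-\beta}v^{+}\|^{2} + O(|z|^{3})$. The cross terms vanish because $E_{-\beta}v^{+}$ has a different weight from $v^{+}$, and weight spaces are orthogonal for the $G$-invariant inner product. The standard $\mathfrak{sl}_{2}$-relation $[E_{\beta},E_{-\beta}] = h_{\beta}$ gives $\|E_{-\beta}v^{+}\|^{2} = \langle\varpi_{\alpha},\beta^{\vee}\rangle\,\|E_{-\beta}\|^{2}$, with normalization fixed by the Killing form. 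Then $\partial_{z}\partial_{\bar z}\log$ at $z=0$ yields $\frac{1}{2\pi}\langle\varpi_{\alpha},\beta^{\vee}\rangle\|E_{-\beta}\|^{2}$. This is consistent with Theorem \ref{AZADBISWAS}(5), since integrating over $\mathbbm{P}^{1}_{\beta}$ gives $\langle\varpi_{\alpha},\beta^{\vee}\rangle$, and I can use that fact as a check on the normalization. The care needed here is the constants and verifying that $c_{\beta}$ does not depend on $\alpha$; both come out of the $\mathfrak{sl}_{2}$ computation.

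Summing over $\alpha$ gives
\[
\theta_{o}(E_{-\beta},\overline{E_{-\beta}}) = c_{\beta}\sum_{\alpha\in\Delta\backslash I}\langle[\theta],[\mathbbm{P}^{1}_{\alpha}]\rangle\langle\varpi_{\alpha},\beta^{\vee}\rangle = c_{\beta}\,\langle\lambda([\theta]),\beta^{\vee}\rangle,
\]
using the definition \eqref{weightcohomology}. The same computation applies to $\omega$, with $\langle\lambda([\omega]),\beta^{\vee}\rangle>0$ because $[\omega]$ lies in the Kähler cone of Theorem \ref{AZADBISWAS}(4) and $\beta\in\Phi_{I}^{+}$. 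Taking the ratio, $c_{\beta}$ cancels, which gives \eqref{eigenvalues}.
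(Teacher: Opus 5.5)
Your argument is correct and is essentially the standard proof the paper relies on. The paper itself gives no proof and only cites \cite{correa2026deformed}, but the simultaneous normal form in Remark \ref{normalizationform} is exactly what your computation produces: working at $o$ in the root coordinates $z_{\beta}$ of the opposite big cell, $T$-invariance makes both forms diagonal, and the potential $\varphi_{\varpi_{\alpha}}$ together with the $\mathfrak{sl}_{2}$-relation gives diagonal coefficients of ${\bf{\Omega}}_{\alpha}$ proportional to $\langle \varpi_{\alpha},\beta^{\vee}\rangle$. The only slip is a constant: with the Killing-form norm, where $E_{-\beta}=-\tau(E_{\beta})$, one gets $\|E_{-\beta}v^{+}\|^{2}=\tfrac{\langle\beta,\beta\rangle}{2}\langle\varpi_{\alpha},\beta^{\vee}\rangle\|E_{-\beta}\|^{2}$, or exactly $\langle\varpi_{\alpha},\beta^{\vee}\rangle$ under an $\mathfrak{sl}_{2}$-triple normalization; the extra factor depends only on $\beta$ and cancels in the ratio, as you anticipated.
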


\begin{remark}
\label{normalizationform}
In the setting of the proof of Proposition \ref{eigenvalueatorigin}, we obtain from the previous result the following pointwise description  
\begin{equation}
 \omega = \sum_{\beta \in \Phi_{I}^{+}} \frac{\sqrt{-1}}{2} dz_{\beta} \wedge d\overline{z_{\beta}} \ \ \ \ {\text{and}} \ \ \ \ \displaystyle \theta = \sum_{\beta \in \Phi_{I}^{+}} \frac{\sqrt{-1}}{2} {\bf{q}}_{\beta}(\omega^{-1} \circ \theta)dz_{\beta} \wedge d\overline{z_{\beta}},
\end{equation}
for every closed $G$-invariant real $(1,1)$-form $\theta \in \Omega^{1,1}(X_{P})$.
\end{remark}

\begin{remark}
\label{primitivecalc}
Given ${\bf{\Omega}}_{\alpha} \in c_{1}(\mathscr{O}_{\alpha}(1))$, $\alpha \in \Delta \backslash I$, and fixed some $G$-invariant K\"{a}hler metric $\omega$ on $X_{P}$, since ${\bf{\Omega}}_{\alpha}$ is harmonic with respect to $\omega$ (see Remark \ref{harmonic2forms}), it follows that 
\begin{equation}
-{\rm{d}}^{c}\Lambda_{\omega}({\bf{\Omega}}_{\alpha}) = \delta_{\omega}{\bf{\Omega}}_{\alpha} = 0,
\end{equation}
i.e., $\Lambda_{\omega}({\bf{\Omega}}_{\alpha})$ is constant. Thus, we obtain
\begin{equation}
\label{contraction}
\Lambda_{\omega}({\bf{\Omega}}_{\alpha})= {\rm{tr}}(\omega^{-1} \circ {\bf{\Omega}}_{\alpha}) = \sum_{\beta \in \Phi_{I}^{+}} \frac{\langle \varpi_{\alpha}, \beta^{\vee} \rangle}{\langle \lambda([\omega]), \beta^{\vee}\rangle},
\end{equation}
for every $\alpha \in \Delta \backslash I$. In particular, for every holomorphic line bundle ${\bf{E}} \in {\rm{Pic}}(X_{P})$, we have a Hermitian structure ${\bf{h}}$ on ${\bf{E}}$, such that the curvature $F_{\nabla}$ of the associated Chern connection $\nabla \myeq {\rm{d}} + \partial \log ({\bf{h}})$, satisfies 
\begin{equation}
\label{tracecurvature}
\frac{\sqrt{-1}}{2\pi} \Lambda_{\omega}(F_{\nabla}) = \sum_{\beta \in \Phi_{I}^{+} } \frac{\langle \lambda({\bf{E}}), \beta^{\vee} \rangle}{\langle \lambda([\omega]), \beta^{\vee}\rangle}.
\end{equation}
From this, we have that $\nabla$ is a Hermitian-Yang-Mills (HYM) connection (e.g. \cite{Kobayashi+1987}). Notice that 
\begin{equation}
c_{1}({\bf{E}}) = \sum_{\alpha \in \Delta \backslash I}\langle \lambda({\bf{E}}), \alpha^{\vee} \rangle [{\bf{\Omega}}_{\alpha}],
\end{equation}
for every ${\bf{E}} \in {\rm{Pic}}(X_{P})$, i.e., the curvature of the HYM connection $\nabla$ on ${\bf{E}}$ coincides with the $G$-invariant representative of $c_{1}({\bf{E}})$. 
\end{remark}

\begin{remark}
\label{proj}
It is worthwhile to observe that, in the setting above, if ${\bf{L}} \in {\text{Pic}}(X_{P})$ is a negative line bundle, then ${\bf{L}}^{-1}$ is in fact very ample, i.e., we have a projective embedding 
\begin{center}
$\iota \colon X_{P} \hookrightarrow \mathbbm{P}(V(\lambda({\bf{L^{-1}}}))) = {\text{Proj}}\big (H^{0}(X_{P},{\bf{L}}^{-1})^{\ast} \big).$
\end{center}
Thus, we have the identification
\begin{equation}
\label{tautological}
{\bf{L}} \cong  \Big \{ \big ([x],v \big ) \in X_{P} \times V(\lambda({\bf{L}}^{-1})) \ \ \Big | \ \ v \in \langle x \rangle_{\mathbb{C}}\Big \},
\end{equation}
Here we consider the geometric realization $V(\lambda({\bf{L}}^{-1})) \cong  H^{0}(X_{P},{\bf{L}}^{-1})^{\ast}$ via Borel-Weil theorem (e.g. \cite{serre1954representations}, \cite{sale2002several}, \cite{Akhiezer}).
\end{remark}

\subsection*{Canonical Bundle of Flag Varieties} By keeping the previous notation, let $X_{P} = G^{\mathbbm{C}}/P$ be a complex flag variety defined by some parabolic Lie subgroup $P = P_{I} \subset G^{\mathbbm{C}}$. Considering the identification $T_{{\rm{o}}}^{1,0}X_{P} \cong \mathfrak{m} \subset \mathfrak{g}^{\mathbbm{C}}$, such that ${\rm{o}} = eP \in X$ and 
\begin{center}
$\mathfrak{m} = \displaystyle \sum_{\alpha \in \Phi_{I}^{-}} \mathfrak{g}_{\alpha}$,
\end{center}
 we can realize $T^{1,0}X$ as being a holomoprphic vector bundle, associated with the holomorphic principal $P$-bundle $P \hookrightarrow G^{\mathbbm{C}} \to X_{P}$, such that 

\begin{center}

$T^{1,0}X_{P} = \Big \{(U_{i})_{i \in J}, \underline{{\rm{Ad}}}\circ \psi_{i j} \colon U_{i} \cap U_{j} \to {\rm{GL}}(\mathfrak{m}) \Big \}$,

\end{center}
where $\underline{{\rm{Ad}}} \colon P \to {\rm{GL}}(\mathfrak{m})$ is the isotropy representation. From this, we obtain 
\begin{equation}
\label{canonicalbundleflag}
{\bf{K}}_{X_{P}}^{-1} = \det \big(T^{1,0}X_{P} \big) = \Big \{(U_{i})_{i \in J}, \det (\underline{{\rm{Ad}}}\circ \psi_{i j}) \colon U_{i} \cap U_{j} \to \mathbbm{C}^{\times} \Big \}.
\end{equation}
Since the character $\det \circ \underline{{\rm{Ad}}} \in {\text{Hom}}(P,\mathbbm{C}^{\times})$ is completely determined by its restriction to the torus ${\bf{Z}}(L_{P})^{0} = \exp(\mathfrak{z}(\mathfrak{l}_{P})) \subset P$, see Eq. (\ref{commutatordec}), it follows that 
\begin{equation}
\det \underline{{\rm{Ad}}}(\exp({\bf{s}})) = {\rm{e}}^{{\rm{tr}}({\rm{ad}}({\bf{s}})|_{\mathfrak{m}})} = {\rm{e}}^{- \langle \delta_{P},{\bf{s}}\rangle },
\end{equation}
$\forall {\bf{s}} \in \mathfrak{z}(\mathfrak{l}_{P})$, such that 
\begin{equation}
\delta_{P} = \sum_{\alpha \in \Phi_{I}^{+} } \alpha =  \sum_{\alpha \in \Delta \backslash I} \langle \delta_{P}, \alpha^{\vee} \rangle \varpi_{\alpha}. 
\end{equation}
In fact, considering the representation 
\begin{center}
${\rm{ad}}(\cdot)|_{\mathfrak{u}_{P}^{+}} \colon \mathfrak{l}_{P} \to \mathfrak{gl}(\mathfrak{u}_{P}^{+})$, 
\end{center}
for every $\alpha \in I$, we have
\begin{equation}
\langle \delta_{P},\alpha^{\vee} \rangle = \delta_{P}(h_{\alpha}) = {\rm{Tr}}({\rm{ad}}(h_{\alpha})|_{\mathfrak{u}^{+}_{P}}) = 0,
\end{equation}
since $h_{\alpha}  \in \mathfrak{s}_{P} = [\mathfrak{l}_{P},\mathfrak{l}_{P}]$. From this, we conclude that 
\begin{equation}
{\bf{K}}_{X_{P}} = \bigotimes_{\alpha \in \Delta \backslash I}\mathscr{O}_{\alpha}(-1)^{\langle \delta_{P},\alpha^{\vee} \rangle},
\end{equation}
see Remark \ref{powergenerators}. From above, we see that $X_{P}$ is a Fano variety.

\begin{remark}
\label{canonicalflag}
If we consider the invariant K\"{a}hler metric $\omega_{0} \in \Omega^{1,1}(X_{P})^{G}$ defined by
\begin{equation}
\label{riccinorm}
\omega_{0} := \sum_{\alpha \in \Delta \backslash I}2 \pi \langle \delta_{P}, \alpha^{\vee} \rangle {\bf{\Omega}}_{\alpha},
\end{equation}
it follows that
\begin{equation}
\label{ChernFlag}
c_{1}(X_{P}) = - c_{1}({\bf{K}}_{X_{P}}) = \Big [ \frac{\omega_{0}}{2\pi}\Big].
\end{equation}
By the uniqueness of $G$-invariant representative of $c_{1}(X_{P})$, we conclude that  
\begin{center}
\label{Ricciinvariant}
${\rm{Ric}}(\omega) = \omega_{0}$, 
\end{center}
for every $G$-invariant Kähler metric $\omega$ on $X_{P}$. In particular, $\omega_{0} \in \Omega^{1,1}(X_{P})^{G}$ defines a $G$-invariant K\"{a}hler-Einstein metric on $X_{P}$ (cf. \cite{MATSUSHIMA}).

In the above setting, since ${\bf{K}}_{X_{P}}^{-1}$ is a very ample line bundle, we have the projective embedding 
\begin{equation}
X_{P} \hookrightarrow \mathbbm{P}(H^{0}(X,{\bf{K}}_{X_{P}}^{-1})^{\ast}) = \mathbbm{P}(V(\delta_{P})),
\end{equation}
see Remark \ref{proj}. In particular, considering the associated affine cone 
\begin{equation}
{\rm{Aff}}(X_{P}) := {\rm{Spec}} \Bigg (\bigoplus_{n\geq 0}H^{0}\big (X,({\bf{K}}_{X_{P}}^{-1})^{\otimes n} \big ) \Bigg)
\end{equation}
it follows that the Cartan-Remmert reduction (e.g. \cite{grauert1962modifikationen})
\begin{equation}
\mathscr{R} \colon Z = {\rm{Tot}}({\bf{K}}_{X_{P}}) \to {\rm{Aff}}(X_{P}),
\end{equation}
provides a resolution of the isolated singularity at the vertex of the affine cone ${\rm{Aff}}(X_{P})$. This resolution is a crepant resolution of singularity, i.e., ${\bf{K}}_{Z} = \mathscr{R}^{\ast} {\bf{K}}_{{\rm{Aff}}(X_{P})}$, see for instance \cite{van2010ricci} and \cite{Correa}. In particular, since 
\begin{equation}
{\bf{K}}_{Z} \cong p^{\ast}({\bf{K}}_{X_{P}} \otimes {\bf{K}}_{X_{P}}^{-1}) \cong \mathcal{O}_{Z},
\end{equation}
it follows that ${\rm{Aff}}(X_{P})$ is a singular Calabi-Yau variety.

For the sake of completeness, let us describe the holomorphic volume form on ${\bf{K}}_{X_{P}}$.  Let $(z_{1}, \dots, z_{n})$ be local coordinates on a chart $U_{\alpha}$ of the base variety $X_{P}$. A local holomorphic top-form on $X_{P}$ is given by
\begin{equation}
\tau_{\alpha} = dz_{1} \wedge \dots \wedge dz_{n}
\end{equation}
The form $\tau_{\alpha}$ serves as a local section of the canonical bundle ${\bf{K}}_{X_{P}}$. Hence, we can use $\tau_{\alpha}$ as a local frame for the fibers. Let $w_{\alpha}$ be the fiber coordinate relative to this choice of frame, meaning a point in the total space of ${\bf{K}}_{X_{P}}$ is locally represented as $w_{\alpha} \cdot \tau_{\alpha}$.

Consider another chart $U_{\beta}$ with local coordinates $(z'_{1}, \dots, z'_{n})$ and local top-form 
\begin{center}
$\tau_\beta = dz'_{1} \wedge \dots \wedge dz'_{n}$. 
\end{center}
The base top-forms transform via the Jacobian determinant $J_{\alpha\beta}$, that is, $\tau_{\alpha} = J_{\alpha\beta} \tau_{\beta}$. Since the geometric point in the total space must be independent of the choice of frame ($w_{\alpha} \tau_{\alpha} = w_{\beta} \tau_{\beta}$), the fiber coordinates must transform inversely as $w_{\alpha} = J_{\alpha\beta}^{-1} w_{\beta}$. To construct the global top-degree $(n+1)$-form $\Omega$ on the total space ${\bf{K}}_{X_{P}}$, we take the exterior differential of the coordinates. In the $\alpha$-chart, the natural candidate is $\Omega_{\alpha} = \tau_{\alpha} \wedge dw_{\alpha}$. Let us check how this transforms into the $\beta$-chart:
\begin{equation}
dw_{\alpha} = d(J_{\alpha\beta}^{-1} w_{\beta}) = J_{\alpha\beta}^{-1} dw_{\beta} + w_{\beta}d(J_{\alpha\beta}^{-1})
\end{equation}
Substituting this and the transformation of $\tau_\alpha$ yields
\begin{equation}
\tau_\alpha = (J_{\alpha\beta} \tau_\beta) \wedge ( J_{\alpha\beta}^{-1} dw_{\beta} + w_{\beta}d(J_{\alpha\beta}^{-1}) )
\end{equation}
Expanding the wedge product gives:
\begin{equation}
\Omega_{\alpha} = \tau_{\beta} \wedge dw_{\beta} + w_{\beta} J_{\alpha\beta}\tau_{\beta} \wedge d(J_{\alpha\beta}^{-1})
\end{equation}
The second term vanishes identically because $d(J_{\alpha\beta}^{-1}) = \sum \partial_{z'_{i}} (J_{\alpha\beta}^{-1}) dz'_{i}$, and wedging any $dz'_{i}$ with the already complete base top-form $\tau_{\beta} = dz'_{1} \wedge \dots \wedge dz'_{n}$ yields zero. Thus, we obtain $\Omega_{\alpha} = \tau_{\beta} \wedge dw_{\beta} = \Omega_{\beta}$ on the overlap $U_{\alpha} \cap U_{\beta}$. This proves that the $(n+1)$-form 
\begin{equation}
\Omega = dz_{1} \wedge \dots \wedge dz_{n} \wedge dw
\end{equation}
is globally well-defined, independent of the choice of local coordinates, and strictly non-vanishing everywhere on the total space of $Z = {\rm{Tot}}({\bf{K}}_{X_{P}})$.

\end{remark}

\subsection*{dHYM Equation on Flag Varieties}

Let $(X, \omega)$ be a compact Kähler manifold of complex dimension $n$. The dHYM equation for a closed form $\chi \in [\psi] \in H^{1,1}(X, \mathbbm{R})$ seeks a solution to the equation
\begin{equation}
    {\rm{Im}}(\omega + \sqrt{-1}\chi)^{n} = \tan(\hat{\Theta}) {\rm{Re}}(\omega + \sqrt{-1}\chi)^{n},
\end{equation}
where $\hat{\Theta}$ is a topological phase constant. Equivalently, one seeks $\chi$ such that
\begin{equation}
\Theta_{\omega}(\chi) = \sum_{j=1}^{n} \arctan({\bf{q}}_{j}) = \hat{\Theta} \ ({\rm{mod}} \ 2\pi)
\end{equation}
is constant, where ${\bf{q}}_{j}$ are the eigenvalues of the endomorphism $\omega^{-1} \circ \chi$. In the above setting, we say that $\Theta_{\omega}(\chi)$ is the Lagrangian phase of $\chi$.

The existence problem for the dHYM equation can be solved unconditionally in the context of rational homogeneous varieties $X_{P} = G/P$, more precisely, we have the following theorem.

\begin{theorem}[\cite{correa2026deformed}]
\label{theoremDHYM}
Given a  K\"{a}hler class $[\omega] \in \mathcal{K}(X_{P})$, for every $[\eta] \in H^{1,1}(X_{P},\mathbbm{R})$ we have 
\begin{equation}
\label{phaseangleCartan}
\hat{\Theta}= {\textrm{Arg}} \int_{X_{P}}\frac{(\omega + \sqrt{-1}\eta)^{n}}{n!} = \sum_{\beta \in \Phi_{I}^{+}} \arctan \bigg( \frac{\langle \lambda([\eta]),\beta^{\vee} \rangle}{\langle \lambda([\omega]),\beta^{\vee} \rangle}\bigg) \ ({\textrm{mod}}\ 2 \pi),
\end{equation}
such that $\lambda([\eta]), \lambda([\omega]) \in \Lambda_{P} \otimes \mathbbm{R}$. In particular, fixed the unique $G$-invariant representative $\omega_{0} \in [\omega]$, there exists $f \in C^{\infty}(X_{P})$ such that $\chi := \eta + \sqrt{-1}\partial \overline \partial f$ satisfies the deformed Hermitian Yang-Mills equation
\begin{equation}
\label{DHYMeqTeo}
{\rm{Im}}\big ( \omega_{0} + \sqrt{-1}\chi \big)^{n} = \tan(\hat{\Theta}) {\rm{Re}}\big ( \omega_{0} + \sqrt{-1}\chi\big)^{n}.
\end{equation}
\end{theorem}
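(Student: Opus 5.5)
The plan is to show that the unique $G$-invariant representative of $[\eta]$ already solves the equation, so that no PDE needs to be solved. Let $\chi \in [\eta]$ be the closed $G$-invariant real $(1,1)$-form, which exists and is unique by Remark \ref{harmonic2forms}: it is the $\omega_{0}$-harmonic representative. Since $\chi$ and $\eta$ are cohomologous real $(1,1)$-forms on the compact Kähler manifold $X_{P}$, the $\partial\overline{\partial}$-lemma gives $f \in C^{\infty}(X_{P})$ with $\chi = \eta + \sqrt{-1}\partial\overline{\partial} f$. What remains is to show that $\chi$ has constant Lagrangian phase, and that this phase equals the argument of the topological integral.

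First, apply Proposition \ref{eigenvalueatorigin} to the pair $(\omega_{0},\chi)$. Because $\omega_{0}$ is the $G$-invariant representative of $[\omega]$, we have $\lambda([\omega_{0}]) = \lambda([\omega])$ and $\lambda([\chi]) = \lambda([\eta])$. So the eigenvalues of $\omega_{0}^{-1}\circ\chi$ are the constants
\[
{\bf{q}}_{\beta} = \frac{\langle \lambda([\eta]),\beta^{\vee}\rangle}{\langle \lambda([\omega]),\beta^{\vee}\rangle}, \qquad \beta \in \Phi_{I}^{+}.
\]
By Remark \ref{normalizationform}, at the origin $o$ both forms are simultaneously diagonal in the coordinates $z_{\beta}$. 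Expanding the product then gives
\[
(\omega_{0} + \sqrt{-1}\chi)^{n} = \prod_{\beta \in \Phi_{I}^{+}}(1+\sqrt{-1}{\bf{q}}_{\beta})\,\omega_{0}^{n}
\]
at $o$. Both sides are $G$-invariant top forms and $G$ acts transitively on $X_{P}$, so this identity holds at every point, with a constant complex coefficient.

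Write $c = \prod_{\beta}(1+\sqrt{-1}{\bf{q}}_{\beta})$. Then $|c| = \prod_{\beta}(1+{\bf{q}}_{\beta}^{2})^{1/2} > 0$ and ${\rm Arg}(c) = \sum_{\beta}\arctan({\bf{q}}_{\beta}) \pmod{2\pi}$. Hence ${\rm Im}(\omega_{0}+\sqrt{-1}\chi)^{n} = {\rm Im}(c)\,\omega_{0}^{n}$ and ${\rm Re}(\omega_{0}+\sqrt{-1}\chi)^{n} = {\rm Re}(c)\,\omega_{0}^{n}$. This is exactly the dHYM equation with $\tan(\hat{\Theta}) = {\rm Im}(c)/{\rm Re}(c)$. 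In the degenerate case $\hat{\Theta} \equiv \pi/2 \pmod \pi$, the equation should be read in the invariant form ${\rm Im}\big(e^{-\sqrt{-1}\hat{\Theta}}(\omega_{0}+\sqrt{-1}\chi)^{n}\big)=0$, and the same argument applies.

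For the topological formula, $\int_{X_{P}}(\omega+\sqrt{-1}\eta)^{n}$ depends only on the classes $[\omega]$ and $[\eta]$, since both forms are closed. It therefore equals
\[
\int_{X_{P}}(\omega_{0}+\sqrt{-1}\chi)^{n} = c\int_{X_{P}}\omega_{0}^{n}.
\]
Because $\int_{X_{P}}\omega_{0}^{n} > 0$, its argument is ${\rm Arg}(c)$, which gives Eq. (\ref{phaseangleCartan}).

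The only delicate point is the pointwise simultaneous diagonalization of $\omega_{0}$ and $\chi$ with the Lie-theoretic eigenvalues. Here it is taken from Proposition \ref{eigenvalueatorigin} and Remark \ref{normalizationform}. Proving it from scratch would require the isotropy decomposition of $T_{o}X_{P}$ into the root spaces $\mathfrak{g}_{-\beta}$, together with evaluating $\sqrt{-1}\partial\overline{\partial}\varphi_{\varpi_{\alpha}}$ on them.
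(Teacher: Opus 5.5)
Your proof is correct, and it follows essentially the route the paper has in mind. The paper does not prove this statement itself; it cites \cite{correa2026deformed}, and the remark right after it identifies the solution as the unique $G$-invariant representative $\chi\in[\eta]$, whose constant eigenvalues from Proposition \ref{eigenvalueatorigin} give $(\omega_0+\sqrt{-1}\chi)^n=\prod_{\beta}(1+\sqrt{-1}\,{\bf{q}}_{\beta})\,\omega_0^n$, hence the constant phase and, after integration, the topological formula. The only implicit point is that $\omega_0$ is actually Kähler, which you need in order to invoke Proposition \ref{eigenvalueatorigin}: average any Kähler form in $[\omega]$ over the compact connected group $G$ and use uniqueness of the invariant representative.
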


In the above theorem we have that $\chi$ is the unique $G$-invariant representative in the cohomology class $[\psi]$. Moreover, the Lagrangian phase of $\chi$ with respect to $\omega_{0}$ is given by
\begin{equation}
\Theta_{\omega_{0}}(\chi) = \sum_{\beta \in \Phi_{I}^{+}} \arctan \bigg( \frac{\langle \lambda([\chi]),\beta^{\vee} \rangle}{\langle \lambda([\omega_{0}]),\beta^{\vee} \rangle}\bigg).
\end{equation}
In particular, given ${\bf{L}} \in {\rm{Pic}}(X_{P})$, we have 
\begin{equation}
\label{lagrangianphaselinebundle}
\hat{\Theta}({\bf{L}}) = \displaystyle {\textrm{Arg}} \int_{X_{P}}\frac{(\omega_{0} + \sqrt{-1}\chi)^{n}}{n!} = \Theta_{\omega_{0}}(\chi)  \ ({\textrm{mod}}\ 2 \pi), 
\end{equation}
where $\chi \in  c_{1}({\bf{L}})$ is the unique $G$-invariant representative. Thus, we set $\hat{\Theta}({\bf{L}}):= \Theta_{\omega_{0}}(\chi)$.

\section{Calabi-Yau Metrics via Calabi-Ansatz}

In this section, we recall the construction of Calabi-Yau metrics on the canonical bundle of K\"{a}hler-Einstein Fano manifolds, for more details on this subject, see \cite{Calabi1979} and \cite{HwangSinger2002}.

Since our interest is in the particular class of such manifolds provided by rational homogeneous varieties, based on the previous results, we shall consider from now on that $X = G^{\bf{C}}/P$ is a flag variety defined by a parabolic subgroup $P = P_{I} \subset G^{\bf{C}}$. In this setting, we fix a $G$-invariant K\"{a}hler-Einstein metric $\omega_{0}$ on $X$, such that 
\begin{center}
${\rm{Ric}}(\omega_{0}) = \omega_{0}$.
\end{center}
Due to the triviality of the first Chern class of $Z = {\rm{Tot}}({\bf{K}}_{X})$, this space admits a complete Ricci-flat Kähler metric. The construction of this metric relies on the celebrated Calabi ansatz. In order to establish this result, we consider the following auxiliary definition and proposition.

\begin{definition}
A divergent path on a Riemannian manifold $(M,g)$ is a continuous curve $\gamma\colon  [0, 1] \to M$ such that, for any compact subset $K$ of $M$, there is a number $t_{0}(K)$ such that $\gamma(t)$ is contained in the complement $M \backslash K$ for all $t > t_{0}(K)$. In other words, a divergent path on $M$ is a ray that ultimately leaves every compact subset of $M$.
\end{definition}

\begin{proposition}
\label{completemanifold}
A Riemannian manifold $(M,g)$ is complete if and only if every divergent $C^{1}$-path $\gamma \colon [0, 1) \to M$ has infinite length.
\end{proposition}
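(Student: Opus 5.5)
The plan is to derive both directions from the Hopf--Rinow theorem. The forward implication uses the compactness of closed metric balls. The converse uses a maximal, non-extendable geodesic: it has finite length, and we show that it must be divergent.

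\emph{($\Rightarrow$)} Assume $(M,g)$ is complete and let $\gamma\colon[0,1)\to M$ be a divergent $C^{1}$-path. Suppose, for contradiction, that its length $L$ is finite. For every $t\in[0,1)$ we have $d(\gamma(0),\gamma(t))\le L$. Hence the whole image of $\gamma$ lies in the closed metric ball $K=\overline{B}(\gamma(0),L)$. By Hopf--Rinow, closed and bounded subsets of a complete manifold are compact, so $K$ is compact. This contradicts divergence, since $\gamma(t)$ would have to leave $K$ for all $t$ close to $1$. Therefore $L=+\infty$.

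\emph{($\Leftarrow$)} Assume every divergent $C^{1}$-path has infinite length, and suppose $M$ is not complete. By Hopf--Rinow, $M$ is then not geodesically complete. So there is a unit-speed geodesic $\gamma\colon[0,b)\to M$ with $b<\infty$ that cannot be extended past $b$. Its length is $b<\infty$, and reparametrizing by $t\mapsto bt$ gives a $C^{1}$-path on $[0,1)$ of the same finite length. It therefore suffices to show that $\gamma$ is divergent, which contradicts the hypothesis.

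Showing divergence is the main step, and I would argue by contradiction. If $\gamma$ is not divergent, there exist a compact set $K$ and times $t_{n}\to b$ with $\gamma(t_{n})\in K$. Passing to a subsequence, $\gamma(t_{n})\to p\in M$. Around $p$ I would choose a uniformly normal (totally normal) neighborhood $W$ and a number $\varepsilon>0$ with the following property: for every $q\in W$ and every unit vector $v\in T_{q}M$, the geodesic $\exp_{q}(sv)$ is defined for $|s|<\varepsilon$. This is possible by the smooth dependence of the geodesic flow on initial data together with compactness of the unit sphere bundle over $\overline{W}$.

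Next I take $n$ large enough that $\gamma(t_{n})\in W$ and $b-t_{n}<\varepsilon/2$. The geodesic starting at $\gamma(t_{n})$ with initial velocity $\gamma'(t_{n})$ is then defined on $(t_{n}-\varepsilon,t_{n}+\varepsilon)$. By uniqueness of geodesics it agrees with $\gamma$ on the overlap of the two domains. Since $t_{n}+\varepsilon>b$, this gives an extension of $\gamma$ beyond $b$, contradicting maximality. Hence $\gamma$ is divergent, and this contradiction shows that $M$ is complete.
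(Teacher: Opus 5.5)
Your proof is correct and follows the same route as the paper. For $(\Rightarrow)$ both use Hopf--Rinow compactness of the bounded set containing a finite-length path, and for $(\Leftarrow)$ both use a maximal geodesic on a finite interval. The only difference is in showing that this geodesic is divergent: you justify it through a uniform existence time near an accumulation point, whereas the paper only asserts that a non-divergent such geodesic would have a limit at $t=1$ and hence extend.
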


\begin{proof}[{\bf{Proof}}]
$(\Rightarrow)$ If $M$ is complete and $\gamma \colon [0, 1) \to M$ is an arbitrary $C^1$-path of finite length, then $\gamma([0, 1))$ is bounded. Consequently, the closure of $\gamma([0, 1))$ is compact by the Hopf--Rinow theorem, and therefore $\gamma$ is not divergent. 

($\Leftarrow$) If $M$ is not complete, then we can find a geodesic $\gamma \colon [0, 1) \to M$ having $[0, 1)$ as its maximal domain of definition. The curve $\gamma$ is divergent since otherwise $\lim_{t \to 1^{-}} \gamma(t)$ would exist and $\gamma(t)$ could be extended beyond $t = 1$. Since $\gamma$ is a geodesic, its speed $\|\dot{\gamma}(t)\|$ is constant, $\forall t \in [0, 1)$, and therefore the length $l(\gamma) = \int_{0}^{1} ||\dot{\gamma}(t)|| dt$ of $\gamma$ is finite.
\end{proof}

\begin{theorem}[Calabi]
There exists a complete, Ricci-flat K\"{a}hler metric on the manifold $Z = {\rm{Tot}}({\bf{K}}_{X})$ of the form 
\begin{equation}
    \omega_{{\rm{CY}}} = p^{\ast}\omega_{0} + \sqrt{-1} \partial \bar{\partial} F(s),
\end{equation}
where $F$ is a smooth real-valued function depending strictly on the radial coordinate $s$.
\end{theorem}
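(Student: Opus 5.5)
We construct the metric by the Calabi ansatz, reduce the Ricci-flat condition to an ODE in $s$, solve it explicitly, and then check smoothness across the zero section and completeness via Proposition \ref{completemanifold}.

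\textbf{Setting up the ansatz.} Let $h$ be the Hermitian metric on ${\bf{K}}_{X}$ induced by $\omega_{0}$, so that in the local frame $\tau_{\alpha}$ we have $s = h_{\alpha}(z)|w_{\alpha}|^{2}$. Since ${\rm{Ric}}(\omega_{0}) = \omega_{0}$ and ${\bf{K}}_{X}^{-1}$ carries the dual metric, the curvature of $h$ gives
\[
\sqrt{-1}\partial\bar\partial \log s = p^{\ast}\omega_{0} \quad\text{away from the zero section.}
\]
Write $t = \log s$ and $\omega = \sqrt{-1}\partial\bar\partial\Phi(t)$. A direct computation then yields
\[
\omega = \Phi'(t)\,p^{\ast}\omega_{0} + \Phi''(t)\,\sqrt{-1}\,\partial t\wedge\bar\partial t .
\]
Setting $U = \Phi'$ and expressing everything in $s$ recovers the form $U(s)p^{\ast}\omega_{0} + V(s)\sqrt{-1}\partial s\wedge\bar\partial s/s$ with $V = U'$, after absorbing the base term into $p^{\ast}\omega_{0} + \sqrt{-1}\partial\bar\partial F(s)$.

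\textbf{Reduction to an ODE.} Using the normal coordinates of Remark \ref{normalizationform}, compute
\[
\omega^{n+1} = (n+1)\,U^{n}\,U'(t)\,\frac{\omega_{0}^{n}}{n!}\wedge \sqrt{-1}\,\frac{dw\wedge d\bar w}{|w|^{2}}\cdot(\dots).
\]
Compare this with $\sqrt{-1}^{(n+1)^{2}}\Omega\wedge\bar\Omega$, where $\Omega = \tau\wedge dw$ is the global holomorphic volume form from Remark \ref{canonicalflag}. Since $|\tau|^{2}_{h^{-1}}$ is proportional to $\omega_{0}^{n}/(h_{\alpha}\,\text{dvol})$, the Ricci-flat equation $\omega^{n+1} = c\,\sqrt{-1}^{(n+1)^{2}}\Omega\wedge\bar\Omega$ reduces to $U^{n}\,dU/dt = c\,e^{t}$. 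This is exactly where ${\rm{Ric}}(\omega_{0}) = \omega_{0}$ is used: it makes the $h_{\alpha}$ factors cancel, leaving an ODE in $t$ alone. Integrating gives
\[
U^{n+1} = 1 + c\,s ,
\]
with the constant chosen so that $U(0) = 1$. In particular $U(s) = (1+cs)^{1/(n+1)}$, and therefore
\[
V = U' = \frac{c}{n+1}\,(1+cs)^{-n/(n+1)} > 0 .
\]
Thus $F(s) = \int_{0}^{s}\frac{U(u)-1}{u}\,du$, which is smooth in $s$ because $U - 1$ vanishes at $s = 0$ and $U$ is real-analytic there.

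\textbf{Smoothness across the zero section.} On the zero section, $\partial s\wedge\bar\partial s/s = h_{\alpha}\,\sqrt{-1}\,dw\wedge d\bar w + O(|w|)$. Since $V(0) > 0$ and $U(0) = 1$, the metric extends smoothly and positively across $s = 0$, and $F(s)$ is a smooth function of $s = h|w|^{2}$.

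\textbf{Completeness (the main obstacle).} Let $\gamma$ be a divergent path. As $X$ is compact, we must have $s(\gamma(t)) \to \infty$. The length of $\gamma$ is bounded below by its radial part,
\[
\int \sqrt{V/s}\,\frac{ds}{2}\cdot(\text{const}) \sim \int^{\infty} s^{-\frac{n}{2(n+1)}-\frac12}\,ds = \int^{\infty} s^{-\frac{2n+1}{2n+2}}\,ds = \infty .
\]
The exponent is less than $1$, so the integral diverges. By Proposition \ref{completemanifold}, the metric is therefore complete.

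The delicate points are verifying the radial-length lower bound rigorously, which requires $|d\sqrt{s}|_{\omega}$ to be controlled by $\sqrt{V/s}$ up to a constant, and normalizing the Ricci potential constant.
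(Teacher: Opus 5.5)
Your proposal is correct and follows essentially the same route as the paper. Both use the Calabi ansatz and reduce Ricci-flatness to $U^{n}U'(s)=C$; your $U^{n}\,dU/dt=c\,e^{t}$ in $t=\log s$ is the same ODE, since $dU/dt=s\,dU/ds$. Both then take the explicit solution $U=(1+cs)^{1/(n+1)}$ normalized by $U(0)=1$, extend smoothly across the zero section, and get completeness from the divergence of $\int^{\infty}\sqrt{V/s}\,ds$.

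Your completeness step is in fact cleaner than the paper's. Compactness of $X$ makes $s$ proper, so $s\to\infty$ along any divergent path. The lower bound you flag follows from $\omega_{{\rm{CY}}}\geq V\sqrt{-1}\,\partial s\wedge\bar{\partial}s/s$, which gives $|\partial s|^{2}_{\omega_{{\rm{CY}}}}\leq s/V$ and hence a bounded gradient for $\rho(s)=\int_{0}^{s}\sqrt{V(u)/u}\,du$.
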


\begin{proof}[{\bf{Proof}}]
Let $p: {\bf{K}}_{X} \to X$ be the natural projection. The K\"{a}hler-Einstein metric $\omega_{0}$ induces a Hermitian metric $h$ on the fibers of ${\bf{K}}_{X}$ whose curvature form satisfies 
\begin{equation}
\label{curveKE}
\sqrt{-1}F_{h} = \sqrt{-1}\bar{\partial} \partial\log h = - \omega_{0}. 
\end{equation}
Let $s = r^{2}$ denote the squared norm of a fiber element with respect to $h$. We carry out the geometric computations on the punctured bundle $Z^{\times} = {\rm{Tot}}({\bf{K}}_{X}^{\times})$, where $s > 0$. Using the curvature relation given in Eq. (\ref{curveKE}), a straightforward calculation in a local trivialization yields
\begin{equation}
    \sqrt{-1} \partial \bar{\partial} s = s  p^{\ast}\omega_{0} + \frac{\sqrt{-1} \partial s \wedge \bar{\partial} s}{s}.
\end{equation}
Applying the chain rule to $F(s)$, we obtain
\begin{equation}
    \sqrt{-1} \partial \bar{\partial} F(s) = F'(s) \sqrt{-1} \partial \bar{\partial} s + F''(s) \sqrt{-1} \partial s \wedge \bar{\partial} s.
\end{equation}
Substituting this result into the ansatz for $\omega_{{\rm{CY}}}$, we can group the horizontal and vertical components as follows
\begin{equation}
    \omega_{{\rm{CY}}} = (1 + sF'(s)) p^{\ast}\omega_{0} + (F'(s) + sF''(s)) \frac{\sqrt{-1} \partial s \wedge \bar{\partial} s}{s}.
\end{equation}
We define the geometric scale factors for the base and the fiber, respectively, as 
\begin{equation}
U(s) := 1 + sF'(s) \ \ \  \text{and} \ \ \ V(s) := F'(s) + sF''(s). 
\end{equation}
Note the fundamental identity $V(s) = U'(s)$. Thus, the volume form of $\omega_{{\rm{CY}}}$ is
\begin{equation}
    \omega_{{\rm{CY}}}^{n+1} = (n+1) U(s)^{n} U'(s) p^{\ast}\omega_{0}^{n} \wedge \frac{\sqrt{-1} \partial s \wedge \bar{\partial} s}{s}.
\end{equation}
To evaluate the Ricci curvature, we expand the wedge product in local trivializing coordinates $(x, w)$, where $x \in X$ and $w$ is the fiber coordinate. Since $s = h(x)|w|^{2}$, we have 
\begin{center}
$\displaystyle \partial \log s = \partial \log h + \frac{dw}{w}$. 
\end{center}
Because $p^{\ast}\omega_{0}^{n}$ is a top form on the base, the mixed terms vanish, yielding
\begin{equation}
    p^{\ast}\omega_{0}^{n} \wedge \frac{\sqrt{-1} \partial s \wedge \bar{\partial} s}{s} = s \cdot p^{\ast}\omega_{0}^{n} \wedge \big(\sqrt{-1} \partial \log s \wedge \bar{\partial} \log s\big) = h p^{\ast}\omega_{0}^{n} \wedge \sqrt{-1} dw \wedge d\bar{w}.
\end{equation}
By the K\"{a}hler-Einstein condition ${\rm{Ric}}(\omega_{0}) = \omega_{0}$, the volume form of the base satisfies 
\begin{center}
$\sqrt{-1}\partial\bar{\partial}\log(\omega_{0}^{n}) = -\omega_{0}$. 
\end{center}
Simultaneously, Eq. (\ref{curveKE}) implies $\sqrt{-1}\partial\bar{\partial}\log h = \omega_{0}$. As the determinant of the base $\omega_{0}^{n}$ and the metric $h$ transform reciprocally, the product $h  \omega_{0}^{n}$ defines a globally well-defined volume density. Considering the canonical holomorphic $(n+1,0)$-form $\Omega = dz_{1} \wedge \dots \wedge dz_{n} \wedge dw$, the fundamental identity 
\begin{center}
$\Omega \wedge \bar{\Omega} = c' ||\Omega||_{\omega_{{\rm{CY}}}}^{2} \omega_{{\rm{CY}}}^{n+1}$ 
\end{center}
holds globally for some positive constant $c'$. For the metric $\omega_{{\rm{CY}}}$ to be Ricci-flat, the norm $||\Omega||_{\omega_{{\rm{CY}}}}$ must be globally constant, forcing the conformal factor of the volume form to be a constant $C > 0$. This reduces the complex Monge-Ampère equation to the exact ODE
\begin{equation}
    U(s)^{n} U'(s) = C.
\end{equation}
Integrating both sides with respect to $s$ we obtain
\begin{equation}
    \frac{U(s)^{n+1}}{n+1} = C s + C_{0}.
\end{equation}
To ensure that the horizontal component restricts to $\omega_{0}$ over the base $X$, we impose $U(0) = 1$, implying $C_{0} = \frac{1}{n+1}$. Solving for $U(s)$ yields
\begin{equation}
    U(s) = (1 + c s)^{\frac{1}{n+1}},
\end{equation}
where $c = C(n+1) > 0$. The vertical scale factor $F'(s) = \frac{(1 + c s)^{\frac{1}{n+1}} - 1}{s}$ is smooth at $s=0$ with 
\begin{equation}
\lim_{s \to 0} F'(s) = \frac{c}{n+1} > 0, 
\end{equation}
ensuring $\omega_{{\rm{CY}}}$ extends to a smooth K\"{a}hler metric over $Z$. 

Finally, to establish completeness, we consider a $C^1$ path $\gamma: [0, a) \rightarrow Z$. If the path is horizontally divergent in the base $X$, the completeness of $(X, \omega_{0})$ ensures infinite length. If the path is not horizontally divergent, it must be vertically divergent along the radial fiber direction. In this case, the geodesic length is dictated by the integral
\begin{equation}
\int_{0}^{+\infty} \sqrt{\frac{V(s)}{s}} ds = 2\sqrt{\frac{c}{n+1}} \int_{0}^{+\infty} \frac{1}{(1+cs)^{\frac{n}{2(n+1)}}} d(s^{\frac{1}{2}}).
\end{equation}
The divergence of this integral as $s \to \infty$ implies that the path $\gamma$ has infinite length. Therefore, from Proposition \ref{completemanifold}, it follows that $(Z,\omega_{\rm{CY}})$ is complete.
\end{proof}

\begin{remark} In the setting of the above theorem, it is worth pointing out the following facts.
\begin{itemize}
    \item The projection $p:{\bf{K}}_{X}\to X$ is a homotopy equivalence (its fiber is $\mathbbm{C}$, which is contractible). Hence the pullback
    \begin{equation}
    p^{\ast}: H^{1,1}(X,\mathbbm{R}) \longrightarrow H^{1,1}({\bf{K}}_{X},\mathbbm{R})
    \end{equation}
    is an isomorphism of real vector spaces.
    \item For any class $[\eta]\in H^{1,1}({\bf{K}}_{X},\mathbbm{R})$, there exists a unique class $[\chi]\in H^{1,1}(X,\mathbbm{R})$ such that $[\eta]=p^{\ast}[\chi]$.
    \item Choose a closed representative $\chi$ of $[\chi]$ (for instance, the unique $G$-invariant representative on $X$). Then $p^{\ast}\chi$ is a closed representative of $[\eta]$.
    \item The term $-\sqrt{-1}\partial\bar\partial\phi(s)$ is exact (it equals $dd^c$ of a function). Consequently,
    \begin{equation}
    \chi_{\text{tot}} = p^{\ast}\chi - \sqrt{-1}\partial\bar\partial\phi(s)
    \end{equation}
    lies in the same cohomology class $[\eta]$.
\end{itemize}
Therefore, for every $[\eta]\in H^{1,1}({\bf{K}}_{X},\mathbbm{R})$ there exists a representative of the form $p^{\ast}\chi - \sqrt{-1}\partial\bar\partial\phi(s)$.
\end{remark}

\section{Global Existence for Tangent Phase Equations}

Consider the following standard results \cite{malley2005theory}.

\begin{theorem}[Picard-Lindel\"of]
\label{thm:picard}
Let $U \subset \mathbbm{R} \times \mathbbm{R}^{n}$ be an open set, and let $f\colon U \to \mathbbm{R}^{n}$ be continuous and locally Lipschitz in the second variable (uniformly in the first variable on compact intervals). Then for any initial condition $(s_{0}, y_{0}) \in U$, there exists a unique maximal solution $y\colon I_{\max} \to \mathbbm{R}^{n}$ of the initial value problem
\begin{equation}
\begin{cases}y'(s) = f(s, y(s)),\\
y(s_{0}) = y_{0},
\end{cases}
\end{equation}
where $I_{\max}$ is an open interval containing $s_{0}$. Moreover, if $I_{\max} \neq \mathbbm{R}$, then as $s$ approaches the endpoint of $I_{\max}$, the solution either leaves every compact subset of $U$ or blows up to infinity.
\end{theorem}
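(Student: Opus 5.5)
The statement just given is the standard Picard–Lindelöf theorem together with the extension (blow-up) alternative. I would prove it in three stages: local existence and uniqueness via a contraction argument, construction of the maximal solution by gluing, and the extension alternative by contradiction.

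\emph{Local existence and uniqueness.} Fix $(s_{0},y_{0}) \in U$. Choose $a,b>0$ so that the closed box $R=[s_{0}-a,s_{0}+a]\times \overline{B_{b}(y_{0})}$ lies in $U$. On $R$ we have a bound $|f|\le M$, by continuity and compactness, and a Lipschitz constant $L$ in the second variable, by the local Lipschitz hypothesis applied on the compact set $R$ (cover $R$ by finitely many neighborhoods where $f$ is Lipschitz; a Lebesgue-number argument gives a uniform $L$). Set $h=\min\{a,b/M,1/(2L)\}$. On the complete metric space
\[
\mathcal{C}=\bigl\{y\in C([s_{0}-h,s_{0}+h],\overline{B_{b}(y_{0})})\bigr\}
\]
with the sup norm, consider the Picard operator
\[
(Ty)(s)=y_{0}+\int_{s_{0}}^{s}f(u,y(u))\,du .
\]
The bound $|f|\le M$ gives $T(\mathcal{C})\subset\mathcal{C}$, and the Lipschitz bound gives $\|Ty-Tz\|\le Lh\|y-z\|\le\tfrac12\|y-z\|$. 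Banach's fixed point theorem then yields a unique local solution. A Grönwall argument shows that any two solutions defined on a common interval through $s_{0}$ agree there: the set where they coincide is nonempty, closed, and, by the local uniqueness just proved applied at any point of coincidence, open.

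\emph{Maximal solution.} Let $I_{\max}$ be the union of the domains of all solutions through $(s_{0},y_{0})$. This is an open interval, since each domain can be extended slightly past its endpoints by local existence. By the uniqueness just established, these solutions agree on overlaps, so they glue to a single solution on $I_{\max}$, which is unique and maximal.

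\emph{Extension alternative.} Let $\sigma=\sup I_{\max}<\infty$, and suppose the conclusion fails. Then there are a compact $K\subset U$ and a sequence $s_{k}\to\sigma$ with $(s_{k},y(s_{k}))\in K$. This is the main point to handle, and I would do it via a uniform existence time. Take $\delta>0$ with the $\delta$-neighborhood $K_{\delta}$ of $K$ still compact in $U$. Taking $a=b=\delta$ in the first step, and using the bounds $M,L$ on $K_{\delta}$, every initial point in $K$ yields a solution on an interval of length $h>0$ independent of the point. Choose $k$ with $\sigma-s_{k}<h/2$ and solve from $(s_{k},y(s_{k}))$. This solution agrees with $y$ by uniqueness and extends it beyond $\sigma$, contradicting maximality. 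Hence $(s,y(s))$ eventually leaves every compact set; in particular, if $U=\mathbb{R}\times\mathbb{R}^{n}$, then $|y(s)|\to\infty$. The left endpoint is handled symmetrically.
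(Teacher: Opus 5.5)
The paper gives no proof of this statement. It is quoted as a standard result from the ODE literature (\cite{malley2005theory}) and used as a black box in Lemma~\ref{lemmaEDO}, so there is no argument of the paper to compare yours with. Judged on its own, your proposal is the standard textbook proof: a Banach contraction on a box, gluing into a maximal solution via uniqueness, and a uniform existence time on a compact neighbourhood for the escape alternative. It is correct.

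Two details deserve a line each.

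\emph{Uniform Lipschitz constant.} The Lebesgue-number argument gives the Lipschitz bound only for pairs $(t,z),(t,w)$ with $|z-w|$ below the Lebesgue number $\epsilon$. For the remaining pairs use $|f(t,z)-f(t,w)|\le 2M\le (2M/\epsilon)|z-w|$. Then $L=\max\{\max_i L_i,\,2M/\epsilon\}$ works, on $R$ as well as on $K_\delta$.

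\emph{Box inside $K_\delta$.} The box $[t-\delta,t+\delta]\times\overline{B_\delta(z)}$ around $(t,z)\in K$ lies in the closed $\delta$-neighbourhood $K_\delta$ only if you use the max-norm on $\mathbbm{R}\times\mathbbm{R}^{n}$. With the Euclidean norm, take $a=b=\delta/\sqrt{2}$ instead.

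Note also that you prove the strong form of the alternative: the graph $(s,y(s))$ eventually leaves every compact subset of $U$, not merely that $|y|$ becomes unbounded. This is the form that matters for the paper's application in Lemma~\ref{lemmaEDO}. There $f=v\tan H$ is defined only on an open set where $|H|<\pi/2$, so a bound on $y$ alone does not allow continuation past $\tau_{\max}$. One also needs $H(s,y(s))$ to stay a definite distance from $\pm\pi/2$ as $s\to\tau_{\max}$.
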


\begin{theorem}[Gronwall's Inequality]
\label{thm:gronwall}
Let $u, v$ be nonnegative, continuous functions on $[a,b]$, $C \geq 0$ be a constant, and assume that
\begin{equation}
    v(s) \leq C + \int_{a}^{s}v(\tau)u(\tau) d\tau,
\end{equation}
for $s \in [a, b]$. Then
\begin{equation}
    v(s) \leq C\exp \Big( \int_{a}^{s}u(\tau)d\tau \Big), \ \ s \in [a, b].
\end{equation}
In particular, if $C = 0$, then $v(s) \equiv 0$.
\end{theorem}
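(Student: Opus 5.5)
The plan is the standard integrating-factor argument applied to the right-hand side of the hypothesis. Rather than work with $v$ directly, I will control the auxiliary function
\begin{equation*}
W(s) := C + \int_{a}^{s} v(\tau)u(\tau)\, d\tau, \qquad s \in [a,b].
\end{equation*}
Since $u$ and $v$ are continuous, the fundamental theorem of calculus shows that $W$ is $C^{1}$ on $[a,b]$. It satisfies $W(a) = C$ and $W'(s) = v(s)u(s)$. The hypothesis says exactly that $v(s) \leq W(s)$. Because $u \geq 0$, multiplying this by $u(s)$ preserves the inequality, which gives the differential inequality
\begin{equation*}
W'(s) = u(s)v(s) \leq u(s)W(s), \qquad s \in [a,b].
\end{equation*}

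Next I introduce the integrating factor $E(s) := \exp\big(-\int_{a}^{s}u(\tau)\,d\tau\big)$, which is positive and $C^{1}$ with $E' = -uE$. Then
\begin{equation*}
\frac{d}{ds}\big(E(s)W(s)\big) = E(s)\big(W'(s) - u(s)W(s)\big) \leq 0.
\end{equation*}
Hence $EW$ is non-increasing, so $E(s)W(s) \leq E(a)W(a) = C$. Dividing by $E(s) > 0$ yields $W(s) \leq C\exp\big(\int_{a}^{s}u\big)$. Combined with $v \leq W$, this is the claimed bound.

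For the case $C = 0$, the bound gives $v(s) \leq 0$. Together with the standing assumption $v \geq 0$, this forces $v \equiv 0$ on $[a,b]$.

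I do not expect a genuine obstacle; the argument is routine. The one point needing care is to avoid dividing by $W$ (for instance, by differentiating $\log W$), since $W$ may vanish when $C = 0$. The integrating-factor formulation sidesteps this entirely. Nonnegativity of $u$ is used only once, when passing from $v \leq W$ to $uv \leq uW$. Nonnegativity of $v$ is used only in the final step to conclude $v \equiv 0$.
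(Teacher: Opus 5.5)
Your proof is correct and complete, including the $C=0$ case, which you handle without dividing by $W$. The paper gives no proof of its own: it quotes Gronwall's inequality as a standard result from the ODE literature, and your integrating-factor argument (bounding $W(s)=C+\int_a^s uv$ and showing $E W$ is non-increasing) is the usual textbook proof that such a citation stands for.
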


To establish the rigorous solvability of the ODE associated with the deformed Hermitian-Yang-Mills equation (Theorem \ref{TheoremA}), we isolate the underlying analytical mechanics in the general result of ordinary differential equations provided by the following lemma.

\begin{lemma}[Global Existence for Tangent Phase Equations]
\label{lemmaEDO}
Let $I = [0, \infty)$ and consider the initial value problem (IVP):
\begin{equation}
\label{IVPItangent}
    \begin{cases}
        y'(s) = v(s) \tan\big(H(s, y(s))\big), \\
        y(0) = y_{0},
    \end{cases}
\end{equation}
where $v \colon I \to \mathbbm{R}$ and $H \colon I \times \mathbbm{R} \to \mathbbm{R}$ are functions of class $C^{1}$. Suppose the system satisfies the following three conditions:
\begin{enumerate}
    \item[(A)] $H(0, y_{0}) \in (-\frac{\pi}{2}, \frac{\pi}{2})$.
    \item[(B)] For all $(s, y) \in I \times \mathbbm{R}$, we have $v(s) > 0$ and 
    \begin{equation}
        \frac{\partial H}{\partial y}(s, y) < 0.
    \end{equation}
    \item[(C)] For any $S > 0$, there exist constants $R_{S} > 0$ and $M_S > 0$ such that 
    \begin{equation}
        \big|v(s)\tan(H(s,y))\big| \le M_S(1+|y|), 
    \end{equation}
for all $s \in [0,S]$ and $|y| > R_{S}$ such that $H(s,y) \in (-\pi/2, \pi/2)$.
\end{enumerate}
Then, the IVP given in Eq. (\ref{IVPItangent}) admits a unique global solution $y \colon [0, \infty) \to \mathbbm{R}$, and the dynamical phase strictly satisfies the confinement $H(s, y(s)) \in \big(-\frac{\pi}{2}, \frac{\pi}{2} \big)$ for all $s \ge 0$.
\end{lemma}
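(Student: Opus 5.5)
We work on the open set $\mathcal{U} = \{(s,y) \in (-\varepsilon,\infty)\times\mathbbm{R} \ : \ H(s,y) \in (-\tfrac{\pi}{2},\tfrac{\pi}{2})\}$, after extending $v$ and $H$ in a $C^{1}$ way to a small neighbourhood of $s=0$. On $\mathcal{U}$ the right-hand side $f(s,y) = v(s)\tan(H(s,y))$ is $C^{1}$, hence locally Lipschitz. By (A) the initial point lies in $\mathcal{U}$, so Theorem \ref{thm:picard} gives a unique maximal solution $y$ on $[0,S^{\ast})$ with $(s,y(s)) \in \mathcal{U}$. Uniqueness follows from the local Lipschitz property, or equivalently from Theorem \ref{thm:gronwall} applied to the difference of two solutions on any compact interval where both stay in a compact subset of $\mathcal{U}$. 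The whole task is to show $S^{\ast} = \infty$. Suppose instead that $S^{\ast} < \infty$. Then, as $s \to S^{\ast}$, either $|y(s)| \to \infty$ or $(s,y(s))$ approaches $\partial\mathcal{U}$, meaning $H(s,y(s)) \to \pm\pi/2$ along some sequence.

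\emph{Step 1: no blow-up in finite time.} Put $S = S^{\ast}$ in (C). On the set of times where $|y(s)| > R_{S}$ we have $|y'| \le M_{S}(1+|y|)$, since the solution stays in $\mathcal{U}$. On the complementary set, $|y| \le R_S$ trivially. A Gronwall argument then shows that $|y|$ stays bounded on $[0,S^{\ast})$. Concretely, on each maximal subinterval where $|y|>R_{S}$, apply Theorem \ref{thm:gronwall} to $1+|y|$, starting from the value $1+R_{S}$. This gives
\begin{equation*}
|y(s)| \le (1+R_{S})e^{M_{S}S^{\ast}}.
\end{equation*}

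\emph{Step 2: the phase cannot reach $\pm\pi/2$.} This is the key step, and it is where the monotonicity in (B) is used. Define $\Theta(s) = H(s,y(s))$. Suppose $\Theta$ gets close to $\pi/2$. Then $\tan\Theta$ is large and positive, so $y' = v\tan\Theta > 0$ is large. By (B), this forces the term $\partial_y H \cdot y'$ in
\begin{equation*}
\Theta'(s) = \partial_{s}H(s,y(s)) + \partial_{y}H(s,y(s))\, v(s)\tan\Theta(s)
\end{equation*}
to be very negative. By Step 1, $(s,y(s))$ ranges over a compact set $K = [0,S^{\ast}]\times[-R,R]$. On $K$ we have bounds $|\partial_{s}H| \le A$, $\partial_{y}H \le -\delta < 0$ and $v \ge v_{0} > 0$, using continuity and compactness together with (B). Hence there is a $\tau < \pi/2$ such that $\Theta' < 0$ whenever $\Theta \in [\tau,\pi/2)$, namely any $\tau$ with $\delta v_{0}\tan\tau > A$. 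Symmetrically, $\Theta' > 0$ whenever $\Theta \in (-\pi/2,-\tau]$. A standard barrier argument then shows
\begin{equation*}
|\Theta(s)| \le \max(|\Theta(0)|,\tau) < \pi/2
\end{equation*}
on all of $[0,S^{\ast})$.

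\emph{Conclusion.} Steps 1 and 2 together show that $(s,y(s))$ remains in the compact subset $\{(s,y)\in K : |H(s,y)| \le \max(|\Theta(0)|,\tau)\}$ of $\mathcal{U}$. This contradicts the escape alternative in Theorem \ref{thm:picard}. Therefore $S^{\ast} = \infty$, and the confinement $H(s,y(s)) \in (-\pi/2,\pi/2)$ holds for all $s \ge 0$ by construction.

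The main obstacle is Step 2. The bounds on $\partial_{s}H$ and $\partial_{y}H$ are not uniform in the hypotheses, and they become uniform only after Step 1 has confined $y$ to a compact set. For this reason the order of the two steps matters: Step 1 must come first.
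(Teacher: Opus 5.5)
Your argument is correct. It uses the same three ingredients as the paper: the barrier computation $\Theta'=\partial_sH+\partial_yH\,v\tan\Theta$ driven by (B), a Gronwall bound driven by (C), and the continuation criterion. You apply them in the opposite order, and the reordering matters.

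The paper proves phase confinement first. It takes a ``first time'' $s_0$ at which $H(s_0,y(s_0))=\pi/2$, bounds $\partial_sH$ on the ``compact'' set $\{(s,y(s)) : s\in[0,s_0]\}$, and uses that $|\partial_yH|\,v$ stays away from zero there. Only afterwards does it run Gronwall with $\tilde{M}_{S}=\max(K_S,M_S)$, where $K_S$ is asserted to bound $v\tan H$ along the trajectory while $|y|\le R_S$. As written, these steps presuppose each other:
\begin{itemize}
\item the barrier step needs $y$ bounded up to $s_0$, which is only obtained from the Gronwall step;
\item $K_S$ needs a uniform margin of $H$ from $\pm\pi/2$, which the strict inclusion $H\in(-\pi/2,\pi/2)$ does not give.
\end{itemize}

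Your decomposition removes this circularity. Restricting Gronwall to excursions where $|y|>R_S$ makes Step 1 independent of the phase; it only uses $(s,y(s))\in\mathcal{U}$. Compactness of $K$ then supplies the uniform constants $A,\delta,v_0$. The barrier produces the explicit margin $\max(|\Theta(0)|,\tau)<\pi/2$, and the compact-exit alternative of Theorem~\ref{thm:picard} closes the argument. The paper's ordering gives a cleaner narrative, with confinement stated as a standalone ``repulsion'' property. Yours is the one that goes through as stated.

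One small correction: if $|y_0|>R_S$, the first excursion begins at $s=0$ with value $1+|y_0|$, not $1+R_S$. Step 1 should therefore read $|y(s)|\le\bigl(1+\max(R_S,|y_0|)\bigr)e^{M_{S}S^{\ast}}$; nothing else changes.
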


\begin{proof}[{\bf{Proof}}]
From Condition $(A)$, observing that $H$ is continuous, let $U \subset I \times \mathbbm{R}$ be an  open neighborhood of $(0,y_{0})$ where 
\begin{center}
$H(s,y)\in(-\pi/2,\pi/2), \forall (s,y) \in U$. 
\end{center}
On $U$, consider the $C^{1}$ map $f \colon U \to \mathbbm{R}$ defined by
\begin{center}
$f(s,y):= v(s) \tan\big(H(s, y)\big)$.
\end{center}
Since $f$ is locally Lipschitz on $U$ in the second variable, by Theorem \ref{thm:picard}, there exists a unique maximal solution $y \colon [0,\tau_{\max})\to\mathbbm{R}$ for the IVP given in Eq. (\ref{IVPItangent}), with $\tau_{\max}>0$.

\medskip

Now define $u(s)=H(s,y(s))$. Since $H$ and $y$ are $C^{1}$, it follows that $u$ is $C^{1}$ and
\begin{center}
$u'(s)=H_{s}(s,y(s))+H_{y}(s,y(s))y'(s)=H_{s}(s,y(s))+H_{y}(s,y(s))v(s)\tan(u(s))$.
\end{center}
Now, from condition (B), $v(s)>0$ and $H_{y}(s,y)<0$ for all $(s,y)$, we prove that $u(s)$ never reaches $\pm\pi/2$.
\begin{enumerate}
\item[\underline{Case 1.}] $u < \pi/2$: Suppose that there exists $s_{0}>0$ such that $u(s_{0})=\pi/2$. Because $u$ is continuous and $u(0)=H(0,y_{0})\in(-\pi/2,\pi/2)$, we can define the \textit{first} time
\begin{equation}
s_{0}=\min\{s>0 \ | \ u(s)=\pi/2\},\ \  u(s)<\pi/2\;\forall s\in[0,s_{0}).
\end{equation}

For $s$ close to $s_{0}$ from below, $u(s)\in(\pi/2-\delta,\pi/2)$ with $\delta>0$ small. On this interval $\tan(u)\to+\infty$ as $u\to\pi/2^-$. Moreover, $H_{y}<0$ and $v>0$, so the product $H_{y} v(s)\tan(u(s))$ is negative and its norm tends to $+\infty$.\\

Since $H_{s}$ is continuous, it is bounded on the compact set $\{(s,y(s)) \ | \ s\in[0,s_{0}]\}$. Hence, there exists a constant $C>0$ such that 
\begin{center}
$|H_{s}(s,y(s))|\le C$, \ \ \ $\forall s\in[0,s_{0}]$.
\end{center}
Choose $\delta>0$ sufficiently small so that whenever $u\in(\pi/2-\delta,\pi/2)$,
\begin{equation}
H_{y}(s,y(s)) v(s)\tan(u(s))\le -(C+2).
\end{equation}
This is possible because the left‑hand side tends to $-\infty$ as $u\to\pi/2^-$. Consequently, for all $s$ in a left neighborhood $(s_{0}-\varepsilon,s_{0})$, with $\varepsilon >0$ sufficiently small, we have
\begin{center}
$u'(s)=H_{s}+H_{y} v(s)\tan(u)\le C-(C+2)=-2\le -1$.
\end{center}

Now integrating the inequality $u'(w)\le -1$ from $s$ to $s_{0}$ (with $s_{0}-\varepsilon<s<s_{0}$), we obtain
\begin{equation}
u(s_{0})-u(s)=\int_{s}^{s_{0}}u'(w)dw\le\int_s^{s_{0}}(-1)dw=-(s_{0}-s).
\end{equation}
Thus
\begin{center}
$u(s)\ge u(s_{0})+(s_{0}-s)=\frac{\pi}{2}+(s_{0}-s)>\frac{\pi}{2}$,
\end{center}
which contradicts the fact that $u(s)<\pi/2$ for all $s<s_{0}$. Therefore, we conclude $u(s)<\pi/2$ for every $s\ge0$.

\item[\underline{Case 2.}] $u > - \pi/2$: The argument is symmetric. Suppose there exists a first time $s_{1}>0$ with 
\begin{center}
$u(s_{1})=-\pi/2$, \ \ \ $u(s)>-\pi/2$ for $s\in[0,s_{1})$. 
\end{center}
On this interval, $\tan(u)\to-\infty$ as $u\to-\pi/2^+$. Because $H_{y}<0$ and $v>0$, the term $H_{y} v(s)\tan(u(s))$ tends to $+\infty$. As in the previous case, we have 
\begin{center}
$|H_{s}(s,y(s))|\le C$ 
\end{center}
for some constant $C > 0$ on the interval $[0,s_{1}]$. Choose $\delta>0$ sufficiently small so that $u\in(-\pi/2,-\pi/2+\delta)$ and
\begin{center}
$H_{y}(s,y(s))v(s)\tan(u(s))\ge C+2$,
\end{center}
which yields $u'(s)=H_{s}+H_{y} v(s)\tan(u)\geq -C+(C+2)=2\ge 1$ on a left neighborhood of $s_{1}$. Integrating from $s$ to $s_{1}$ gives
\begin{equation}
u(s_{1})-u(s)=\int_{s}^{s_{1}}u'(w)dw\geq\int_{s}^{s_{1}}1dw=s_{1}-s,
\end{equation}
hence
\begin{center}
$u(s)\le u(s_{1})-(s_{1}-s)=-\frac{\pi}{2}-(s_{1}-s)<-\frac{\pi}{2}$,
\end{center}
contradicting $u(s)>-\pi/2$ for $s<s_{1}$. Thus $u(s)>-\pi/2$ for all $s\geq 0$.
\end{enumerate}
Combining Case 1 and Case 2, we conclude that 
\begin{equation}
-\frac{\pi}{2}<H(s,y(s))=u(s)<\frac{\pi}{2}, \ \ \ \forall s\geq 0,
\end{equation}
which completes the proof of phase confinement.

\medskip

Fix an arbitrary $S>0$. By the phase confinement established in the previous step, $u(s) = H(s,y(s))$ remains strictly inside $(-\pi/2, \pi/2)$. Thus, on the bounded region $|y(s)| \le R_{S}$, the continuous function $v(s)\tan(H(s,y(s)))$ is uniformly bounded by some constant $K_{S}$. For the unbounded region $|y(s)| > R$, condition (C) provides the linear bound $M_{S}(1+|y(s)|)$. Combining these regimes, there exists a global constant $\tilde{M}_{S} = \max(K_{S}, M_{S}) > 0$ such that
\begin{center}
$|y'(s)| \le \tilde{M}_S(1+|y(s)|) \ \ \forall s\in[0,S]\cap[0,\tau_{\max})$.
\end{center}
For $s$ in this interval, set $z(s)=1+|y(s)|$. Then, we obtain 
\begin{center}
$z'(s) = {\rm{sgn}}(y(s))y'(s) \leq |y'(s)| \leq \tilde{M}_{S} (1 + |y(s)|) = \tilde{M}_{S}z(s)$
\end{center}
From above, it follows that 
\begin{equation}
z(s) \leq z(0) + \int_{0}^{s}\tilde{M}_{S}z(u)du.
\end{equation}
Therefore, Theorem \ref{thm:gronwall} yields
\begin{equation}
y(s) < z(s) \leq z(0)\exp \Big ( \int_{0}^{s}\tilde{M}_{S}du\Big) = z(0)e^{\tilde{M}_{S}s} \leq (1+|y_{0}|) e^{\tilde{M}_{S} S},
\end{equation}
for every $s\in[0,\min\{S,\tau_{\max}\})$. Thus $y(s)$ remains bounded on $[0,\min\{S,\tau_{\max}\})$. If $\tau_{\max}\le S$, then the solution would have a finite limit as $s\to\tau_{\max}^{-}$, and by standard continuation theory it could be extended beyond $\tau_{\max}$, contradicting maximality. Hence $\tau_{\max}>S$. Since $S>0$ is arbitrary, it follows that $\tau_{\max}=+\infty$. Hence, we conclude that the IVP given in Eq. (\ref{IVPItangent}) admits a unique global solution $y \colon [0, \infty) \to \mathbbm{R}$.

\medskip

As we see from above, the solution $y$ exists globally, is unique, and satisfies the phase confinement property. This completes the proof.
\end{proof}

\section{Proof of Main Results}

Now we can prove Theorem \ref{TheoremA}. In what follows, $X = G^{\mathbbm{C}}/P$ is a rational homogeneous variety defined by some parabolic Lie subgroup $P = P_{I} \subset G^{\mathbbm{C}}$. Also, we denote by $\omega_{0}$ the unique $G$-invariant K\"{a}hler-Einstein metric on $X$ satisfying ${\rm{Ric}}(\omega_{0}) = \omega_{0}$.

\begin{proof}[{\bf{Proof of Theorem \ref{TheoremA}}}]
Given $[\eta] = p^{\ast}[\chi] \in H^{1,1}({\bf{K}}_{X}, \mathbbm{R})$, we consider the global $(1,1)$-form ansatz using a radial smooth potential as follows
\begin{equation}
    \Upsilon_{\phi} = p^{\ast}(\chi) - \sqrt{-1} \partial \bar{\partial} \phi(s) \in [\eta].
\end{equation}
Here we take $\chi$ as the unique $G$-invariant solution to the dHYM equation on the base $X$ obtained from Theorem \ref{theoremDHYM}.

Computing the derivative term with the chain rule for $s = r^{2}$ we obtain
\begin{equation}
    \sqrt{-1} \partial \bar{\partial} \phi(s) = \phi'(s) \sqrt{-1} \partial \bar{\partial} s + \phi''(s) \sqrt{-1} \partial s \wedge \bar{\partial} s.
\end{equation}
Recalling that $\sqrt{-1} \partial \bar{\partial} s = s p^{\ast}(\omega_{0}) + \frac{\sqrt{-1} \partial s \wedge \bar{\partial} s}{s}$, we substitute and organize the coefficients as
\begin{equation}
    -\sqrt{-1} \partial \bar{\partial} \phi(s) = -s\phi'(s) p^{\ast}\omega_{0} - (\phi'(s) + s\phi''(s)) \frac{\sqrt{-1} \partial s \wedge \bar{\partial} s}{s}.
\end{equation}
In order to linearize the fiber geometry, we define the radial momentum evaluated at the function $s$ as 
\begin{equation}
\psi(s) := -s\phi'(s). 
\end{equation}
Thus, we have $\psi'(s) = -(\phi'(s) + s\phi''(s))$. From this, the representative form on the total space can be written in the following way
\begin{equation}
    \Upsilon_{\phi} = p^{\ast}(\chi) + \psi(s)p^{\ast}(\omega_{0}) + \psi'(s) \frac{\sqrt{-1} \partial s \wedge \bar{\partial} s}{s}.
\end{equation}

We must now find the eigenvalues of the endomorphism $\omega_{{\rm{CY}}}^{-1} \Upsilon_{\phi}$. Since 
\begin{center}
$\displaystyle \omega_{{\rm{CY}}}= U(s) p^{\ast}\omega_{0} + V(s) \frac{\sqrt{-1} \partial s \wedge \bar{\partial} s}{s}$ 
\end{center}
and the horizontal and vertical spaces are orthogonal with respect to both forms. Therefore, the endomorphism $\omega_{{\rm{CY}}}^{-1} \Upsilon_{\phi}$ diagonalizes in blocks. In the horizontal direction, we have the following description 
\begin{center}
$U(s)^{-1} \omega_{0}^{-1} (\chi + \psi(s)\omega_{0})$. 
\end{center}
Since $\chi$ is the invariant representative, the matrix $\omega_{0}^{-1} \circ \chi$ has constant eigenvalues ${\bf{q}}_{\beta}(\omega_{0}^{-1} \circ \chi)$ for each positive root $\beta \in \Phi_{I}^{+}$, see for instance Proposition \ref{eigenvalueatorigin}. The horizontal eigenvalues of $\omega_{{\rm{CY}}}^{-1} \circ \Upsilon_{\phi}$ are therefore
\begin{equation}
    \Lambda_{\beta}(s) := \frac{{\bf{q}}_{\beta}(\omega_{0}^{-1} \circ \chi) + \psi(s)}{U(s)}, \ \ \ \ \beta \in \Phi_{I}^{+}.
\end{equation}
The vertical block is one-dimensional, generating the isolated fiber eigenvalue 
\begin{equation}
    \Lambda_{{\rm{fib}}}(s) = \frac{\psi'(s)}{V(s)}.
\end{equation}
Since the dHYM equation requires that the sum of the arctangents of the eigenvalues equals the constant phase $\hat{\Theta}_{{\rm{tot}}}$, we obtain the following equation
\begin{equation}
    \sum_{\beta \in \Phi_{I}^{+}} \arctan\Bigg ( \frac{{\bf{q}}_{\beta}(\omega_{0}^{-1} \circ \chi) + \psi(s)}{U(s)} \Bigg) + \arctan\Bigg ( \frac{\psi'(s)}{V(s)} \Bigg) = \hat{\Theta}_{{\rm{tot}}}.
\end{equation}
Isolating the vertical term and applying the tangent function to both sides, we obtain the fundamental ODE for the scalar variable $s$ as follows
\begin{equation}
    \frac{d}{ds}\psi(s) = V(s) \tan\Bigg ( \hat{\Theta}_{{\rm{tot}}} - \sum_{\beta \in \Phi_{I}^{+}} \arctan\Bigg ( \frac{{\bf{q}}_{\beta}(\omega_{0}^{-1} \circ \chi) + \psi(s)}{U(s)} \Bigg) \Bigg ),
\end{equation}
which concludes the proof of item (1). 

\medskip

We now cast this ODE into the framework of Lemma \ref{lemmaEDO} by identifying $y(s) = \psi(s)$, $v(s) = V(s)$, and the dynamical phase function as
\begin{equation}
    H(s, y) := \hat{\Theta}_{{\rm{tot}}} - \sum_{\beta \in \Phi_{I}^{+}} \arctan\Bigg ( \frac{{\bf{q}}_{\beta}(\omega_{0}^{-1} \circ \chi) + y}{U(s)} \Bigg).
\end{equation}
We now verify the necessary conditions (A), (B) and (C) for global existence:
\begin{itemize}
    \item \underline{Condition (A)}: Suppose that the phase calibration $\hat{\Theta}_{{\rm{tot}}}$ satisfies 
        \begin{equation}
        \hat{\Theta}_{{\rm{tot}}}  \in \big (\Theta_{\omega_{0}}(\chi) -\frac{\pi}{2}, \Theta_{\omega_{0}}(\chi) + \frac{\pi}{2}\big), 
    \end{equation}
At the zero section ($s=0$), we require a regular form, thus $\psi(0) = 0$. Using the Calabi boundary condition $U(0) = 1$, the initial phase is 
\begin{equation}
H(0, 0) = \hat{\Theta}_{{\rm{tot}}} - \sum_{\beta} \arctan({\bf{q}}_{\beta}(\omega_{0}^{-1} \circ \chi)) = \hat{\Theta}_{{\rm{tot}}} - \Theta_{\omega_{0}}(\chi). 
\end{equation}
Therefore, we have $H(0, 0) \in (-\pi/2, \pi/2)$. Notice that, from this, there exists a unique maximal solution $\psi \colon [0,\tau_{\max})\to\mathbbm{R}$ for the IVP given by
\begin{equation}
\displaystyle \frac{d}{ds}\psi(s) = V(s)\tan(H(s,\psi(s))), \ \ \ \ \psi(0) = 0.
\end{equation}
From above, in order to show that $\tau_{\max} = +\infty$, we need to verify that the conditions (B) and (C) of Lemma \ref{lemmaEDO} hold.

\item \underline{Condition (B)}: Computing the partial derivative of the $H$ with respect to $y$, it follows that 
\begin{equation}
        \frac{\partial H}{\partial y}(s, y) = - \sum_{\beta \in \Phi_{I}^{+}} \frac{U(s)^{-1}}{1 + \big ( \frac{{\bf{q}}_{\beta}(\omega_{0}^{-1} \circ \chi) + y}{U(s)} \big)^{2}}.
\end{equation}
Since $U(s) \ge 1 > 0$ for all $s \ge 0$, this sum consists of strictly negative terms. Thus, we obtain that 
\begin{center}
$\displaystyle \frac{\partial H}{\partial y}(s,y) < 0$. 
\end{center}
Furthermore, $V(s) = U'(s) > 0$. Notice that this provides the exact geometric stabilization required to repel the phase from the singular asymptotes, that is, from the phase confinement argument presented in the proof of Lemma \ref{lemmaEDO}, it follows that 
\begin{center}
$u(s) = H(s,\psi(s)) \in (-\pi/2,\pi/2)$
\end{center}
for every $s \in [0,\tau_{\max})$.

\item \underline{Condition (C)}: Given $S > 0$, we need to show that there exist constants $R_{S} > 0$ and $M_S > 0$ such that 
    \begin{equation}
        \big|V(s)\tan(H(s,y))\big| \le M_S(1+|y|), 
    \end{equation}
for all $s \in [0,S]$ and $|y| > R_{S}$ such that $H(s,y) \in (-\pi/2, \pi/2)$. In order to prove this, we proceed as follows. Using the identity 
\begin{center}
$\displaystyle \arctan(x) = \text{sgn}(x)\frac{\pi}{2} - \frac{1}{x} + \mathcal{O}(x^{-3}),$ 
\end{center}
for large $|x|$, we can expand the terms inside the phase for $|y| \gg 1$ in the following way
\begin{equation}
        \arctan\Bigg ( \frac{{\bf{q}}_{\beta}(\omega_{0}^{-1} \circ \chi) + y}{U(s)} \Bigg) = \text{sgn}(y) \frac{\pi}{2} - \frac{U(s)}{y} + \mathcal{O}\bigg (\frac{1}{y^{2}}\bigg).
    \end{equation}
Note that $\operatorname{sgn}(y)$ determines the sign of the limit $\pm\pi/2$, and the sum over $\beta\in\Phi_I^{+}$ contains $n = |\Phi_I^{+}|$ terms, recall that $\dim_{\mathbbm{C}}(X) = n$. 

If we have $L \in (-\pi/2, \pi/2)$ such that 
\begin{equation}
\lim_{|y| \to \infty} H(s, y) = L,
\end{equation}
there exists a radius $R_{S} > 0$ such that $|\tan(H(s,y))|$ is uniformly bounded by some constant $K > 0$ for all $|y| > R_{S}$. Because the geometric scale factor $V(s)$ is continuous on the compact interval $[0, S]$, by choosing $\displaystyle M_{S} = K  \max_{s \in [0,S]} V(s)$, it follows that 
 \begin{center}
    $\big|V(s)\tan(H(s,y))\big| \le M_S(1+|y|)$, 
\end{center}
for all $s \in [0,S]$ and $|y| > R_{S}$ such that $H(s,y) \in (-\pi/2, \pi/2)$.
    
The critical scenario occurs when the topological data forces the limit to be exactly $\pm \pi/2$. Observing that
\begin{equation}
 \lim_{y \to +\infty} H(s, y) = \hat{\Theta}_{{\rm{tot}}} - n\frac{\pi}{2} \ \ \ \text{and} \ \ \ \lim_{y \to -\infty} H(s, y) = \hat{\Theta}_{{\rm{tot}}} + n\frac{\pi}{2},
\end{equation}
the critical scenario occurs when the topological data forces 
\begin{equation}
\hat{\Theta}_{{\rm{tot}}} - n \kappa \frac{\pi}{2} = \sigma \frac{\pi}{2}, \ \ \  \kappa, \sigma \in \{+1, -1\}.
\end{equation}
In this critical scenario, the phase behaves as
\begin{equation}
        H(s, y) = \pm \frac{\pi}{2} + n \frac{U(s)}{y} + \mathcal{O}\bigg (\frac{1}{y^{2}}\bigg).
\end{equation}
Utilizing the Laurent expansion of the tangent function near its singularities, i.e.,
\begin{center}
$\displaystyle \tan\!\left(\pm \frac{\pi}{2} + \varepsilon\right) = -\frac{1}{\varepsilon} + \mathcal{O}(\varepsilon)$, 
\end{center}
we deduce the dominant growth as follows. Considering 
\begin{center}
$\displaystyle \varepsilon = n \ \frac{U(s)}{y} + \mathcal{O}\bigg (\frac{1}{y^{2}}\bigg),$
\end{center}
it follows that 
\begin{center}
$\displaystyle \frac{1}{\varepsilon} = \frac{1}{nU(s)/y + \mathcal{O}(1/y^{2})} = \frac{y}{nU(s)} \cdot \frac{1}{1 + \mathcal{O}(1/y)} = \frac{y}{nU(s)} + \mathcal{O}(1),$
\end{center}
where $\mathcal{O}(1)$ denotes term that remains bounded as $|y| \to \infty$. Here we have used that
\begin{equation}
\frac{\mathcal{O}(1/y^{2})}{nU(s)/y} = \mathcal{O}(1/y) \ \ \ \text{and} \ \ \ \frac{1}{1 + \mathcal{O}(1/y)} = 1 - \mathcal{O}(1/y) +  \mathcal{O}(1/y^{2}) - \cdots.
\end{equation}
     
     From above, we obtain
    \begin{equation}
        \big|\tan(H(s, y))\big| = \left| -\frac{y}{n \cdot U(s)} + \mathcal{O}(1) \right| \le C \frac{|y|}{U(s)},
    \end{equation}
    for some constant $C>0$ when $|y|$ is sufficiently large. Multiplying by the fiber scale factor $V(s)$, the dominant term is proportional to $\frac{V(s)}{U(s)}|y|$. Since $U(s)$ is strictly bounded away from zero and $V(s)$ is continuous on the compact interval $[0, S]$, the ratio is bounded. Thus, there exists a constant $M_S > 0$ and a constant $R_{S} \gg 0$ such that 
    \begin{center}
    $|V(s)\tan(H(s,y))| \leq M_S(1 + |y|)$. 
    \end{center}
for all $s \in [0,S]$ and $|y| > R_{S}$ such that $H(s,y) \in (-\pi/2, \pi/2)$.
\end{itemize}

From the conditions (A), (B) and (C) verified above, it follows from Lemma \ref{lemmaEDO} that the initial value problem
\begin{equation}
 \frac{d}{ds}\psi(s) = V(s) \tan (H(s,\psi(s))), \ \ \ \psi(0) = 0,
\end{equation}
admits a unique global solution $\psi \colon [0, \infty) \to \mathbbm{R}$, and the dynamical phase strictly satisfies the confinement $H(s, \psi(s)) \in \big(-\frac{\pi}{2}, \frac{\pi}{2} \big)$ for all $s \ge 0$.
\medskip

Now since 
\begin{center}
$\psi'(0)=V(0)\tan\big(\hat{\Theta}_{{\rm{tot}}}-\Theta_{\omega_{0}}(\chi)\big),$ 
\end{center}
is finite because the argument is in $(-\pi/2,\pi/2)$ by the phase condition, it follows that 
\begin{equation}
\psi(u)=\psi'(0)u+\mathcal{O}(u^{2}), 
\end{equation}
near $0$. From above, it follows that $\psi(u)/u$ is smooth and the integral 
\begin{equation}
\phi(s) := -\int_{0}^{s}\frac{\psi(u)}{u}du,
\end{equation}
yields a $C^{\infty}$ function $\phi(s)$ on $[0,\infty)$. From above, we have that $\Upsilon_{\phi} \in [\eta]$ defines a smooth solution to the dHYM equation on $Z = {\rm{Tot}}({\bf{K}}_{X})$, concluding the proof.
\end{proof}

\begin{proof}[{\bf{Proof of Corollary \ref{CorollaryA}}}]
Let $h$ be a smooth Hermitian metric on the holomorphic line bundle ${\bf{L}} \to X_{P}$ such that its Chern curvature form satisfies $\sqrt{-1}F_{{\bf{h}}} = \chi$, where $\chi$ is the unique $G$-invariant representative of $c_{1}({\bf{L}})$. From this, we construct a radially symmetric Hermitian metric on the pullback line bundle $\mathcal{L} = p^{\ast}{\bf{L}} \to Z$ by setting
\begin{equation}
\label{radansatz}
    h := p^{\ast}{\bf{h}} \cdot e^{\phi(s)},
\end{equation}
where $\phi(s)$ is a smooth radial potential depending strictly on the squared norm coordinate $s$ of the fibers. 

The curvature of the Chern connection associated with $h$ is given by $F_{h} = p^{\ast}F_{{\bf{h}}} + \bar{\partial}\partial \phi(s)$. Substituting $F_{{\bf{h}}} = -\sqrt{-1}\chi$ and recalling that $\bar{\partial}\partial = -\partial\bar{\partial}$, we obtain the following expression
\begin{equation}
    F_{h} = -\sqrt{-1}p^{\ast}\chi - \partial\bar{\partial}\phi(s) = -\sqrt{-1} \Big( p^{\ast}\chi - \sqrt{-1}\partial\bar{\partial}\phi(s) \Big) = -\sqrt{-1}\Upsilon_{\phi},
\end{equation}
where $\Upsilon_{\phi}$ is the global $(1,1)$-form defined as in Theorem \ref{TheoremA}. Substituting $-F_{h} = \sqrt{-1}\Upsilon_{\phi}$ into the dHYM equation for $\mathcal{L}$, the PDE becomes
\begin{equation}
\label{dHYM_Bundle}
    {\rm{Im}}\Big (\omega_{{\rm{CY}}} + \sqrt{-1}\Upsilon_{\phi} \Big )^{n+1} = \tan(\hat{\Theta}_{{\rm{tot}}}) {\rm{Re}}\Big (\omega_{{\rm{CY}}} + \sqrt{-1}\Upsilon_{\phi} \Big )^{n+1}.
\end{equation}
Since the Lagrangian phase of $\chi$ on the base satisfies $\hat{\Theta}({\bf{L}}) = \Theta_{\omega_{0}}(\chi) \ ({\rm{mod}} \ 2\pi)$, the hypothesis on the phase $\hat{\Theta}_{{\rm{tot}}} \in \big(\hat{\Theta}({\bf{L}}) - \frac{\pi}{2}, \hat{\Theta}({\bf{L}}) + \frac{\pi}{2}\big)$ guarantees that
\begin{equation}
\widetilde{\Theta_{{\rm{tot}}}} := \hat{\Theta}_{{\rm{tot}}} + 2\pi m \in \big(\Theta_{\omega_{0}}(\chi) - \frac{\pi}{2}, \Theta_{\omega_{0}}(\chi) + \frac{\pi}{2}\big),
\end{equation}
for some $m \in \mathbbm{Z}$. Since $\tan(\widetilde{\Theta_{{\rm{tot}}}}) = \tan(\hat{\Theta}_{{\rm{tot}}})$, by Theorem \ref{TheoremA}, there exists a unique smooth global solution $\psi(s) = -s\phi'(s)$ to the radial ODE associated to Eq. (\ref{dHYM_Bundle}). This ensures that $h$ is a globally defined smooth metric solving the dHYM equation on $\mathcal{L}$.

To prove the second part of Corollary \ref{CorollaryA}, assume that conditions (i) and (ii) hold. Substituting these conditions into the fundamental ODE from Theorem \ref{TheoremA}, we obtain the following
\begin{equation}
    \frac{d\psi}{ds} = V(s)\tan\bigg( (n+1)\arctan(q) - n\arctan\Big(\frac{q + \psi(s)}{U(s)}\Big) \bigg).
\end{equation}
We claim that the exact solution to this initial value problem is the linear form 
\begin{center}
$\psi(s) = q(U(s) - 1)$. 
\end{center}
To verify this, we observe that 
\begin{equation}
    \frac{q + \psi(s)}{U(s)} = \frac{q + q(U(s) - 1)}{U(s)} = \frac{qU(s)}{U(s)} = q.
\end{equation}
Consequently, the right-hand side of the ODE simplifies in the following form
\begin{equation}
    V(s)\tan\Big( (n+1)\arctan(q) - n\arctan(q) \Big) = V(s)\tan\big(\arctan(q)\big) = qV(s).
\end{equation}
On the left-hand side, differentiating our ansatz yields $\frac{d\psi}{ds} = qU'(s)$. Since the Calabi ansatz guarantees that $V(s) = U'(s)$, we have $\frac{d\psi}{ds} = qV(s)$. Furthermore, the initial condition $\psi(0) = q(U(0) - 1) = 0$ is trivially satisfied since $U(0) = 1$. By the uniqueness of solutions established in Lemma \ref{lemmaEDO}, we conclude that $\psi(s) = q(U(s) - 1)$ defines the desired solution.

Consequently, all horizontal and vertical eigenvalues of the endomorphism $\omega_{{\rm{CY}}}^{-1} \circ \Upsilon_{\phi}$ are identical and globally constant, equal to $q$. The trace of $\Upsilon_{\phi}$ with respect to the Calabi-Yau metric $\omega_{{\rm{CY}}}$ is given by
\begin{equation}
    \Lambda_{\omega_{{\rm{CY}}}}(\Upsilon_{\phi}) = \sum_{\beta \in \Phi_{I}^{+}} \Lambda_{\beta}(s) + \Lambda_{{\rm{fib}}}(s) = nq + q = (n+1)q.
\end{equation}
Using the curvature identification $F_{h} = -\sqrt{-1}\Upsilon_{\phi}$, we apply the contraction operator to explicitly obtain the scalar curvature of the connection:
\begin{equation}
    \sqrt{-1}\Lambda_{\omega_{{\rm{CY}}}}(F_{h}) = \sqrt{-1}\Lambda_{\omega_{{\rm{CY}}}}\big(-\sqrt{-1}\Upsilon_{\phi}\big) = \Lambda_{\omega_{{\rm{CY}}}}(\Upsilon_{\phi}) = (n+1)q.
\end{equation}
Since this trace is a strict global constant, the metric $h$ simultaneously solves the Hermitian Yang-Mills equation, which completes the proof.
\end{proof}

\section{Final Comments and Examples}

In this final section, we make some remarks concerning our main results and provide some explicit computations on the Eguchi–Hanson space and on the Calabi-Yau threefold ${\bf{K}}_{\mathbbm{P}^{2}}$.

\subsection*{Lie-theoretic Explicit Formulation} 

Following the background on rational homogeneous variety established in Section \ref{generalities}, let $X_{P} = G^{\mathbbm{C}}/P$ be a complex flag variety defined by a standard parabolic subgroup $P = P_I$ associated with a subset of simple roots $I \subset \Delta$. For each simple root $\alpha \in \Delta \backslash I$, let $\varpi_{\alpha}$ be the corresponding fundamental weight and $v_{\varpi_{\alpha}}^{+}$ be the unique highest weight vector of the finite-dimensional irreducible $G^{\mathbbm{C}}$-module $V(\varpi_{\alpha})$. Choosing a local trivializing section $s_i: U_i \to G^{\mathbbm{C}}$ of the principal $P$-bundle over an open neighborhood $U_i \subset X_{P}$, the natural $G$-invariant Hermitian structure on the fundamental line bundle $\mathscr{O}_{\alpha}(1)$ is locally given by the smooth function 
\begin{equation}
q_i(x) = \left\|\Pi_{\varpi_{\alpha}}(s_i(x))v_{\varpi_{\alpha}}^{+}\right\|^{-2}, 
\end{equation}
where $\left\|\cdot\right\|$ is the norm induced by a fixed $G$-invariant inner product on $V(\varpi_{\alpha})$.

As we have seen, the canonical bundle $K_{X_{P}}$ corresponds to the character determined by the sum of positive roots $\delta_{P} = \sum_{\beta \in \Phi_I^{+}} \beta$, which also decomposes as $\delta_{P} = \sum_{\alpha \in \Delta \backslash I} \langle \delta_{P}, \alpha^{\vee} \rangle \varpi_{\alpha}$. This induces a canonical characterization as
\begin{equation}
{\bf{K}}_{X_{P}} \cong \bigotimes_{\alpha \in \Delta \backslash I} \mathscr{O}_{\alpha}(-1)^{\otimes \langle \delta_{P}, \alpha^{\vee} \rangle}.
\end{equation}
Consequently, the fiberwise Hermitian metric ${\bf{h}}$ induced on ${\bf{K}}_{X_{P}}$ by the $G$-invariant Kähler-Einstein metric $\omega_{0}$ of the base space can be written explicitly in terms of the group representation data by 
\begin{equation}
{\bf{h}} = \prod_{\alpha \in \Delta \backslash I} \left\|\Pi_{\varpi_{\alpha}}(s_i(x))v_{\varpi_{\alpha}}^{+}\right\|^{2\langle \delta_{P}, \alpha^{\vee} \rangle}w\overline{w}, \ \  \forall x \in U_i. 
\end{equation}
By defining the local complex coordinates of the total space ${\rm{Tot}}({\bf{K}}_{X_{P}})$ as $(x, w) \in U_i \times \mathbbm{C}$, where $w$ parameterizes the vertical direction along the fiber, the squared norm coordinate $s$ of the Calabi ansatz becomes explicitly as
\begin{equation}
s(x, w) = |w|^{2} \prod_{\alpha \in \Delta \backslash I} \left\|\Pi_{\varpi_{\alpha}}(s_i(x))v_{\varpi_{\alpha}}^{+}\right\|^{2\langle \delta_{P},\alpha^{\vee} \rangle}.
\end{equation}
Substituting this algebraic relation into the exact reduction provided by Theorem \ref{TheoremA}, the global deformed Hermitian-Yang-Mills potential $\phi(x,w)$ on the local Calabi-Yau geometry is retrieved explicitly as a function of the local coordinates by
\begin{equation}
\phi(x, w) = - \int_{0}^{\displaystyle |w|^{2} \prod_{\alpha \in \Delta \backslash I} \left\|\Pi_{\varpi_{\alpha}}(s_i(x))v_{\varpi_{\alpha}}^{+}\right\|^{2\langle \delta_{P}, \alpha^{\vee} \rangle}} \frac{\psi(u)}{u} du,
\end{equation}
where $\psi(u)$ is the unique smooth solution to the radial ordinary differential equation (\ref{dHYMODE}). This formulation unifies the analysis of fully nonlinear PDEs on local Calabi-Yau manifolds with the rigid algebraic symmetries embedded in the root data of rational homogeneous spaces.

For a holomorphic line bundle ${\bf{L}} \in \operatorname{Pic}(X)$ with associated weight 
\begin{equation}
\lambda({\bf{L}}) = \sum_{\alpha \in \Delta \backslash I} \langle c_{1}({\bf{L}}), [\mathbbm{P}_{\alpha}^{1}] \rangle  \varpi_{\alpha},
\end{equation}
the unique $G$-invariant Hermitian metric ${\bf{h}}$ on ${\bf{L}}$ whose curvature is the $G$-invariant representative $\chi \in c_{1}({\bf{L}})$ is given locally by
\begin{equation}
{\bf{h}} = \prod_{\alpha \in \Delta \backslash I} \left\| \Pi_{\varpi_{\alpha}}(s_{i}(x)) v_{\varpi_{\alpha}}^{+} \right\|^{-2 \langle \lambda({\bf{L}}), \alpha^{\vee} \rangle}a\overline{a}.
\end{equation}
Indeed, a direct computation using the identities from Section \ref{generalities} gives
\begin{equation}
\sqrt{-1}F_{{\bf{h}}} = \sum_{\alpha \in \Delta \backslash I} \langle \lambda({\bf{L}}), \alpha^{\vee} \rangle  {\bf{\Omega}}_{\alpha} = \chi,
\end{equation}
where ${\bf{\Omega}}_{\alpha}$ are the $G$-invariant $(1,1)$-forms associated to $\mathscr{O}_{\alpha}(1)$. Substituting this explicit expression for ${\bf{h}}$ into the radial ansatz (\ref{radansatz}) yields the following explicit formula for the Hermitian metric on $\mathcal{L} = p^{\ast}{\bf{L}}$ 
\begin{equation}
h = \prod_{\alpha \in \Delta \backslash I} \left\| \Pi_{\varpi_{\alpha}}(s_{i}(x)) v_{\varpi_{\alpha}}^{+} \right\|^{-2 \langle \lambda({\bf{L}}), \alpha^{\vee} \rangle}\exp \Bigg ( -\int_{0}^{ |w|^{2} \prod_{\alpha \in \Delta \backslash I} \left\|\Pi_{\varpi_{\alpha}}(s_i(x))v_{\varpi_{\alpha}}^{+}\right\|^{2\langle \delta_{P}, \alpha^{\vee} \rangle}} \frac{\psi(u)}{u} du\Bigg)a \overline{a},
\end{equation}
that solves the dHYM equation
\begin{equation}
 {\rm{Im}}\big (\omega_{{\rm{CY}}} - F_{h} \big )^{n+1} = \tan(\hat{\Theta}_{{\rm{tot}}}) {\rm{Re}}\big (\omega_{{\rm{CY}}} -F_{h} \big )^{n+1},
\end{equation}
with  $\hat{\Theta}_{{\rm{tot}}} \in \big ( \hat{\Theta}({\bf{L}}) - \frac{\pi}{2}, \hat{\Theta}({\bf{L}}) + \frac{\pi}{2} \big)$. From above, we have a robust constructive mechanism to obtain explicit solutions to the dHYM equation on non-compact Calabi-Yau manifolds.

\subsection*{Generalizations via Momentum Profile Construction} 

In \cite{HwangSinger2002}, Hwang–Singer's momentum construction combines Calabi's ansatz with ideas from symplectic geometry to produce complete Kähler metrics on circle-invariant subbundles of a Hermitian holomorphic line bundle $p:(L,h)\to (M,\omega_{M})$. The construction is governed by a momentum profile $\varphi$, which encodes the metric in momentum coordinates. Applying this to the total space ${\rm{Tot}}({\bf{E}})$ of every negative line bundle of the form 
\begin{equation}
{\bf{E}} = \frac{m}{I(X_{P})}{\bf{K}}_{X_{P}} \in {\rm{Pic}}(X_{P}),
\end{equation}
such that $m \in \mathbbm{Z}_{+}$ and $I(X_{P})$ denotes the Fano index of $X_{P}$, we obtain a complete Kähler metric of the form 
\begin{equation}
\omega_{\varphi} = U_{\varphi}(s) p^{\ast}\omega_{0} + V_{\varphi}(s) \frac{\sqrt{-1} \partial s \wedge \bar{\partial} s}{s},
\end{equation}
where $s$ denotes the squared norm on the fibers, $\varphi \colon [0,+\infty) \to (0,+\infty)$ is a smooth function called the momentum profile of $\omega_{\varphi}$, and
\begin{equation}
U_{\varphi}(s) = 1 + \frac{m}{I(X_{P})}\tau(s), \qquad V_{\varphi}(s) = \varphi(\tau(s)),
\end{equation}
such that $\tau(s)$ is determined by the differential relation
\begin{equation}
\frac{ds}{s} = \frac{d \tau}{\varphi(\tau)}.
\end{equation}
Here we also have $V_{\varphi} = \frac{I(X_{P})}{m}U_{\varphi}'$, together with $U_{\varphi}(s) > 0$ and $V_{\varphi}(s) > 0$ for all $s \geq 0$. In particular, when $m = I(X_{P})$, we recover the Calabi–Yau case ${\bf{E}} = {\bf{K}}_{X_{P}}$ with $V = U'$, as considered in Theorem \ref{TheoremA}. As it can be seen, the ODE reduction and the global existence argument presented in this work for $({\rm{Tot}}({\bf{K}}_{X_{P}}),\omega_{{\rm{CY}}})$ can be naturally further explored in the case $({\rm{Tot}}({\bf{E}}),\omega_{\varphi})$. In this general setting the ODE to be approached has the following general form
\begin{equation}
\label{generaldHYMprofile}
 \frac{d}{ds}\psi(s) = V_{\varphi}(s) \tan\Bigg ( \displaystyle \hat{\Theta}_{{\rm{tot}}} - \sum_{\beta \in \Phi_{I}^{+}} \arctan\Bigg( \frac{{\bf{q}}_{\beta}(\omega_{0}^{-1} \circ \chi) + \psi(s)}{U_{\varphi}(s)} \Bigg) \Bigg).
\end{equation}
As we have seen in the proof of the Theorem \ref{TheoremA}, the explicit forms of $U(s)$ and $V(s)$ in $\omega_{{\rm{CY}}}$ do not play any role in the arguments. In fact, the proof follows only using the positivity of $U$ and $V$, the monotonicity of the phase function in the radial variable, and the linear growth of $V\tan(H)$ at infinity. Since these properties continue to hold for $({\rm{Tot}}({\bf{E}}),\omega_{\varphi})$, the same global existence result holds in this setting. Once we have the explicit profile $\varphi$, the study of solutions to the dHYM equation on the Kähler manifold $({\rm{Tot}}({\bf{E}}),\omega_{\varphi})$ can be approached by ODE methods similarly to the Calabi-Yau case of $({\rm{Tot}}({\bf{K}}_{X_{P}}),\omega_{{\rm{CY}}})$.

\subsection*{Examples}

In what follows, we present detailed examples which illustrate how our main results can be applied concretely.

\subsubsection*{dHYM Connections on the Eguchi-Hanson Space}

Consider $X_{B} = {\rm SL}_{2}(\mathbbm{C})/B $ equipped with the unique ${\rm SU}(2)$-invariant Kähler-Einstein metric $\omega_{0}$, satisfying ${\rm Ric}(\omega_{0}) = \omega_{0}$. Following Remark \ref{canonicalflag}, we take
\begin{equation}
\omega_{0} = 2\pi\langle \delta_{B}, \alpha^{\vee}\rangle{\bf{\Omega}}_{\alpha} = 4\pi{\bf{\Omega}}_{\alpha},
\end{equation}
where $\alpha$ denotes the unique positive root of $\mathfrak{sl}_{2}(\mathbbm{C})$ and ${\bf{\Omega}}_{\alpha}$ is the ${\rm{SU}}(2)$-invariant $(1,1)$-form generating $\mathrm{Pic}(X_{B})$, normalized by $\int_{\mathbbm{P}^{1}} {\bf{\Omega}}_{\alpha} = \langle \varpi_{\alpha}, \alpha^{\vee}\rangle = 1$. In particular, $[\omega_{0}] = 4\pi c_{1}(\mathscr{O}_{\alpha}(1))$, and 
\begin{equation}
\displaystyle c_{1}(\mathscr{O}_{\alpha}(k)) = \frac{k}{4\pi}[\omega_{0}].
\end{equation}
From above, since $\delta_{B} = \alpha = 2 \varpi_{\alpha}$, we have ${{\bf{K}}}_{X_{B}}^{-1} = \mathscr{O}_{\alpha}(2)$. Thus, from Remark \ref{proj}, we have
\begin{equation}
H^{0}(X_{B}, {\bf{K}}_{X_{B}}^{-1})^{\ast} \cong \mathfrak{sl}_{2}(\mathbbm{C}).
\end{equation}
From the projective embedding $X_{B} \hookrightarrow \mathbbm{P}(\mathfrak{sl}_{2}(\mathbbm{C}))$, we obtain the following projective algebraic realization 
\begin{equation}
X_{B} \cong \Bigg \{ \Bigg [ \begin{pmatrix}
x & y \\
z & -x 
\end{pmatrix} \Bigg ] \in \mathbbm{P}(\mathfrak{sl}_{2}(\mathbbm{C})) \ \Bigg | \ x^{2} + zy = 0 \Bigg \}.
\end{equation}
In particular, identifying $\mathbbm{C}^{3} \cong \mathfrak{sl}_{2}(\mathbbm{C})$, we have the following crepant resolution of isolated singularity
\begin{equation}
\mathscr{R} \colon  Z = {\rm{Tot}}({\bf{K}}_{X_{B}}) \to {\rm{Aff}}(X_{B}) = \Big \{ (x,y,z) \in \mathbbm{C}^{3}  \ \  \Big |  \ \ x^{2} + zy = 0 \Big \}.
\end{equation}
It is worth pointing out that $ {\rm{Aff}}(X_{B}) = \overline{\mathcal{O}_{{\rm{min}}}}$, where $\mathcal{O}_{{\rm{min}}} \subset \mathfrak{sl}_{2}(\mathbbm{C})$ is the minimal nilpotent orbit. Also, the manifold $Z$ equipped with the Calabi ansatz metric $\omega_{{\rm{CY}}}$ is known as the Eguchi-Hanson space.
\begin{center}
\begin{figure}[H]
\centering\includegraphics[scale = .27]{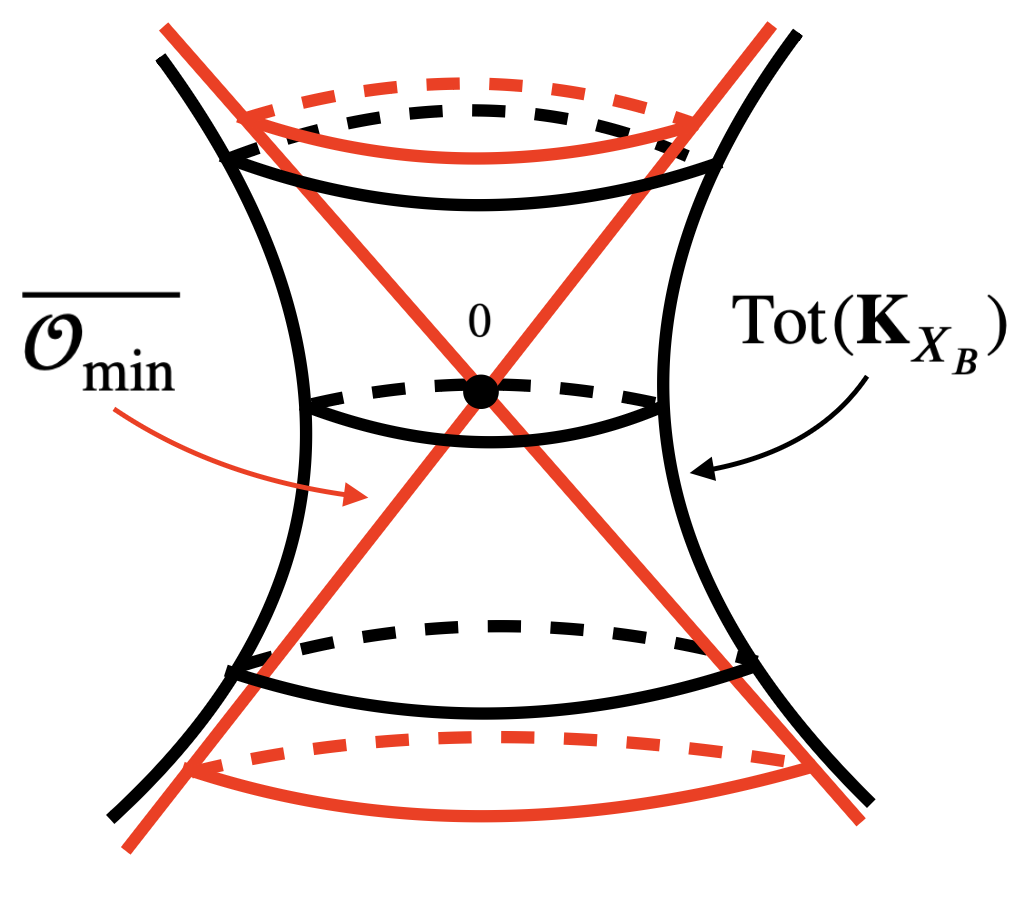}
\caption{Cartan-Remmert reduction as a crepant resolution of isolated singularity.}
\label{singularcone}
\end{figure}
\end{center}
The variety $X_{B}$ has complex dimension $n=1$, which fixes the base scale factor of the Calabi ansatz as $U(s) = \sqrt{1 + c s}$ and the fiber factor corresponding to the scalar ODE as $V(s) = U'(s)$. Let $\mathcal{L} \to Z$ be the pullback of the line bundle $\mathscr{O}_{\alpha}(k) \to X_{B}$. Since we have a single base eigenvalue dictated by the positive root, by Proposition \ref{eigenvalueatorigin} we obtain
\begin{equation}
\label{eq:q-value}
q := {\bf q}(\omega_{0}^{-1} \circ \chi) 
= \frac{\langle \lambda([\chi]), \alpha^{\vee}\rangle}{\langle \lambda([\omega_{0}]), \alpha^{\vee}\rangle} 
= \frac{k}{4\pi},
\end{equation}
where we used $\lambda([\chi]) = k\varpi_{\alpha}$, since $[\chi] = c_{1}(\mathscr{O}_{\alpha}(k))$ and $\lambda([\omega_{0}]) = 4\pi\varpi_{\alpha}$. Hence, following Theorem \ref{TheoremA} and Corollary \ref{CorollaryA}, we can find a Hermitian connection on $\mathcal{L}$ solving the dHYM equation for every phase
\begin{equation}
\hat{\Theta}_{{\rm tot}}  \in \big (\arctan(q) - \pi/2,\arctan(q) + \pi/2 \big).
\end{equation}
Adopting $k > 0$, we have $q > 0$, and we can choose $\hat{\Theta}_{{\rm tot}} = \pi/2$. Using the cotangent identity $\tan(\pi/2 - x) = \cot(x)$, we have
\begin{equation}
    \frac{d\psi}{dU} = \frac{d\psi/ds}{dU/ds} = \frac{V(s) \cot\big ( \arctan\big (\frac{q + \psi}{U}\big) \big)}{V(s)} = \frac{U}{q + \psi}.
\end{equation}
The integration of this separable differential equation, utilizing the regularity condition at the base $\psi(0) = 0$ to fix the integration constant at $U(0)=1$, produces an explicit solution 
\begin{equation}
    \psi(s) = -q + \sqrt{q^{2} - 1 + U(s)^{2}} = -q + \sqrt{q^{2} + cs}.
\end{equation}
As $s\to+\infty$, expanding the square root yields
\begin{equation}
\sqrt{q^{2}+cs}
= \sqrt{cs}\left(1+\frac{q^{2}}{cs}\right)^{1/2}
= \sqrt{cs}+\frac{q^{2}}{2\sqrt{cs}}+\mathcal{O}\!\left(s^{-3/2}\right).
\end{equation}
Therefore,
\begin{equation}
\psi(s)
= \sqrt{cs}-q+\frac{q^{2}}{2\sqrt{cs}}+\mathcal{O}\!\left(s^{-3/2}\right).
\end{equation}
In particular, the dominant term satisfies $\psi(s)\sim \sqrt{cs}\sim U(s)$, as $s\to+\infty$. Moreover, since
\begin{equation}
U(s)=\sqrt{1+cs}
= \sqrt{cs}+\frac{1}{2\sqrt{cs}}+\mathcal{O}\!\left(s^{-3/2}\right),
\end{equation}
we obtain the refined comparison
\begin{equation}
\psi(s)
= U(s)-q+\frac{q^{2}-1}{2\sqrt{cs}}+\mathcal{O}\!\left(s^{-3/2}\right),
\end{equation}
which describes the asymptotic behavior of the dHYM solution on ${{\bf{K}}}_{X_{B}}$.

\begin{remark}[A dHYM solution which is not HYM]
For the line bundle $\mathscr{O}_{\alpha}(k)$ with $k > 4\pi$ (so that $q > 1$) and the topological phase fixed at $\hat{\Theta}_{{\rm tot}} = \pi/2$, the radial momentum was found to be 
\begin{center}
$\psi(s) = -q + \sqrt{q^{2} - 1 + U(s)^{2}}$. 
\end{center}
Consequently, the trace of the endomorphism is given by
\begin{equation}
    \Lambda_{base}(s) + \Lambda_{{\rm fib}}(s) = \frac{\sqrt{q^{2} - 1 + U(s)^{2}}}{U(s)} + \frac{U(s)}{\sqrt{q^{2} - 1 + U(s)^{2}}}.
\end{equation}
This trace is strictly a non-constant function of the radial scale $U(s)$. Therefore, the connection which solves the nonlinear dHYM equation strictly fails to be HYM. In order to solve both geometric PDE's simultaneously, we can choose 
\begin{equation}
\psi(s) = q(U(s) - 1) = q(\sqrt{1+cs} - 1) \sim qU(s).
\end{equation}
As we see, the solution above grows $q$ times faster at infinity than the strictly dHYM solution.
\end{remark}

\begin{remark}
In the last example, we have $X_{B} \cong \mathbbm{P}^{1}$, that is, $Z = {\bf{K}}_{\mathbbm{P}^{1}}$. In this setting, solutions to the dHYM equation was also obtained in \cite{fowdar2024examples} using the cohomogeneity-one symmetry of ${\bf{K}}_{\mathbbm{P}^{1}}$. In view of this example, our main result provides a generalization of this approach which allows one to carry the construction of dHYM connections on every line bundle over ${\bf{K}}_{\mathbbm{P}^{n}} = \mathscr{O}(-n-1)$, for every $n > 0$ (cf. \cite{fowdar2024examples, fowdar2024explicit}). Here the flag variety $\mathbbm{P}^{n}$ can be represented by the following diagram 
\begin{equation*}
{\dynkin[labels={\alpha_{1},\alpha_{2},\alpha_{n}},scale=2.5]A{x*...*}} 
\end{equation*}
It means that $H^{2}(\mathbbm{P}^{n},\mathbbm{Z}) = \mathbbm{Z}[{\bf{\Omega}}_{\alpha_{1}}]$. As it can be seen in the proof of Theorem \ref{TheoremA}, the fundamental result which allows to establish this generalization is the complete description of Lagrangian phase of invariant real $(1,1)$-forms on flag varieties provided in \cite{correa2026deformed}.
\end{remark}

\subsubsection*{Examples with Picard Number Greater than One}

The description of the solutions to the dHYM on $K_{X}$ for flag varieties $X$ with Picard number one can be naturally described as in the case of $\mathbbm{P}^{n}$. This class of examples includes:
\begin{enumerate}
\item[$\bullet$] ${\rm{LGr}}_{4}(\mathbbm{R}^{8}) = \dynkin[scale=1.5]{C}{***x}$ \ \ (Lagrangian Grassmannian).
\item[$\bullet$] $\mathcal{Q}_{7} = \dynkin[scale=1.5]{B}{x***}$ \ \ (Complex Projective Quadric).
\item[$\bullet$] $\mathbb{S}_{5} = \dynkin[scale=1.5]{D}{****x}$ \ \ (Spinor Variety).
\item[$\bullet$] ${\rm{Gr}}_{2}(\mathbbm{C}^{4}) = \dynkin[scale=1.5]{D}{*x*}$  \ \ (Klein Quadric).
\end{enumerate}
Let us consider now an example with Picard number greater than one. Let $X_{B} = {\rm{SL}}_{3}(\mathbbm{C})/B$ the full flag variety associated to ${\rm{SL}}_{3}(\mathbbm{C})$. The diagram representing the variety in this case is
\begin{equation*}
{\dynkin[labels={\alpha_{1},\alpha_{2}},scale=2.5]A{xx}}, 
\end{equation*}
which means that $H^{2}(X_{B},\mathbbm{Z}) = \mathbbm{Z}[{\bf{\Omega}}_{\alpha_{1}}] \oplus \mathbbm{Z}[{\bf{\Omega}}_{\alpha_{2}}]$. Also, we have 
\begin{equation}
\delta_{B} = 2\alpha_{1} + 2\alpha_{2} = 2(\varpi_{\alpha_{1}} + \varpi_{\alpha_{2}}).
\end{equation}
Therefore, we conclude that 
\begin{equation}
{\bf{K}}_{X_{B}}^{-1} = \mathscr{O}_{\alpha_{1}}(2) \otimes \mathscr{O}_{\alpha_{2}}(2).
\end{equation}
From Borel-Weil theorem (see Remark \ref{proj}), we have $H^{0}(X_{B},{\bf{K}}_{X_{B}}^{-1})^{\ast} \cong V(\delta_{B})$ and the projective embedding $X_{B} \hookrightarrow \mathbb{P}(V(\delta_{B}))$. This embedding can be described as follows. At first, we notice that $\delta_{B} = 2\delta^{+}$, where 
\begin{center}
$\delta^{+} = \varpi_{\alpha_{1}} + \varpi_{\alpha_{2}} = \frac{1}{2}\sum_{\alpha \in \Phi^{+}}\alpha$. 
\end{center}
Now we notice that ${\bf{L}} = \mathscr{O}_{\alpha_{1}}(1) \otimes \mathscr{O}_{\alpha_{2}}(1)$ is very ample and 
\begin{equation} 
H^{0}(X_{B},{\bf{L}})^{\ast} \cong V(\delta^{+}) \subset \mathbbm{C}^{3} \otimes \textstyle{\bigwedge^{2}\mathbbm{C}^{3}}.
\end{equation}
Therefore, we have a projective embedding $\iota_{0} \colon X_{B} \hookrightarrow \mathbb{P}(V(\delta^{+}))$, $gB \mapsto [\varrho(g)v_{\delta^{+}}^{+}]$, where
\begin{equation}
v_{\delta^{+}}^{+} = {\bf{e}}_{1} \otimes ({\bf{e}}_{1} \wedge {\bf{e}}_{2}).
\end{equation}
In this case, it follows that 
\begin{center}
$X_{B} \cong  \Big \{ [z \otimes w] \in \mathbb{P}(V(\delta^{+})) \ \Big | \  z_{1}w_{23} - z_{2}w_{13} + z_{3}w_{12} = 0\Big\}$,
\end{center}
where $z = (z_{1},z_{2},z_{3})$ e $w = w_{12} {\bf{e}}_{1} \wedge {\bf{e}}_{2} + w_{13}{\bf{e}}_{1}\wedge {\bf{e}}_{3} + \wedge w_{23}{\bf{e}}_{2} \wedge {\bf{e}}_{3} = (w_{12},w_{13},w_{23})$. Considering the Veronese embedding
\begin{equation}
v_{2} \colon \mathbbm{P}(V(\delta^{+})) \hookrightarrow \mathbbm{P}({\rm{Sym}}^{2}V(\delta^{+})), \ \ [v] \mapsto [v \odot v],
\end{equation}
and observing that $V(\delta_{B}) = V(2\delta^{+}) \subset {\rm{Sym}}^{2}V(\delta^{+}) \subset V(\delta^{+}) \otimes V(\delta^{+})$, we have the desired projective embedding $v_{2} \circ \iota_{0} \colon X_{B} \to \mathbbm{P}(V(\delta_{B}))$ and the algebraic realization 
\begin{equation}
X_{B} \cong \Big \{ [(z \otimes w) \odot (z \otimes w)] \ \Big | \ z_{1}w_{23} - z_{2}w_{13} + z_{3}w_{12} = 0\Big \}.
\end{equation}
From the above ideas, we have the crepant resolution provided by the Cartan-Remmert reduction
\begin{equation}
\mathscr{R} \colon {\rm{Tot}}({\bf{K}}_{X_{B}}) \to {\rm{Aff}}(X_{B}) = \Big \{ (z \otimes w) \odot (z \otimes w) \ \Big | \ z_{1}w_{23} - z_{2}w_{13} + z_{3}w_{12} = 0\Big \}.
\end{equation}
In particular, we notice that 
\begin{equation}
\omega_{0} = 4\pi \big ( {\bf{\Omega}}_{\alpha_{1}} + {\bf{\Omega}}_{\alpha_{2}}\big),
\end{equation}
satisfies ${\rm{Ric}}(\omega_{0}) = \omega_{0}$. From these data, we can consider the Calabi-Yau metric on ${\rm{Tot}}({\bf{K}}_{X_{B}})$ obtained from the Calabi ansatz, i.e.,
\begin{center}
$\displaystyle \omega_{{\rm{CY}}}= U(s) p^{\ast}\omega_{0} + V(s) \frac{\sqrt{-1} \partial s \wedge \bar{\partial} s}{s},$ 
\end{center}
such that $U(s) = (1 + c s)^{\frac{1}{4}}$ and $V(s) = U'(s)$. Given $\mathcal{L} = p^{\ast}{\bf{L}} \to {\bf{K}}_{X_{B}}$, such that  
\begin{equation}
{\bf{L}} = \mathscr{O}_{\alpha_{1}}(a) \otimes \mathscr{O}_{\alpha_{2}}(b),
\end{equation}
we have the unique ${\rm{SU}}(3)$-invariant representative $\chi = a {\bf{\Omega}}_{\alpha_{1}} + b {\bf{\Omega}}_{\alpha_{2}} \in c_{1}({\bf{L}})$ satisfying
\begin{equation}
{\bf{q}}_{1} = {\bf{q}}_{\alpha_{1}}(\omega^{-1} \circ \chi) = \frac{a}{4\pi}, \ \ \ {\bf{q}}_{2} = {\bf{q}}_{\alpha_{2}}(\omega^{-1} \circ \chi) = \frac{b}{4\pi}, \ \ \ {\bf{q}}_{3} = {\bf{q}}_{\alpha_{3}}(\omega^{-1} \circ \chi) = \frac{a+b}{8\pi},
\end{equation}
where $\Phi^{+} = \{\alpha_{1},\alpha_{2}, \alpha_{3} = \alpha_{1} + \alpha_{2}\}$ is the positive root system of $\mathfrak{sl}_{3}(\mathbbm{C})$. From this, the dHYM equation 
\begin{equation}
{\rm{Im}}\big (\omega_{{\rm{CY}}} + \sqrt{-1}\Upsilon_{\phi} \big )^{4} = \tan(\hat{\Theta}_{{\rm{tot}}}) {\rm{Re}}\big (\omega_{{\rm{CY}}} + \sqrt{-1}\Upsilon_{\phi} \big )^{4}
\end{equation}
where $\Upsilon_{\phi} = p^{\ast}\chi - \sqrt{-1}\partial \bar{\partial} \phi(s) \in c_{1}(\mathcal{L})$ reduces to the following IVP
\begin{equation}
\frac{d}{ds}\psi(s) = V(s) \tan\Bigg ( \displaystyle \hat{\Theta}_{{\rm{tot}}} - \sum_{j = 1}^{3} \arctan\Bigg( \frac{{\bf{q}}_{j} + \psi(s)}{U(s)} \Bigg) \Bigg), \ \ \ \ \psi(0) = 0.
\end{equation}
From Theorem \ref{TheoremA}, if we take $\hat{\Theta}_{{\rm{tot}}} \in \big (\Theta_{\omega_{0}}(\chi) -\tfrac{\pi}{2}, \Theta_{\omega_{0}}(\chi) + \tfrac{\pi}{2}\big)$, the IVP above admits a globally defined (unique) smooth solution. Let us examine some particular cases where we can describe the solution explicitly. 

\begin{itemize}
\item[(A)] Consider the case that $a = b$. In this case, we have ${\bf{q}}_{j} = a/4\pi = q$, $j = 1,2,3$, and the associated ODE becomes 
\begin{equation}
\frac{d}{ds}\psi(s) = V(s) \tan\Bigg ( \displaystyle \hat{\Theta}_{{\rm{tot}}} - 3 \arctan\Bigg( \frac{q + \psi(s)}{U(s)} \Bigg) \Bigg), \ \ \ \ \psi(0) = 0.
\end{equation}
Observing that $\Theta_{\omega_{0}}(\chi) = 3 \arctan(q)$, we have 
\begin{equation}
\hat{\Theta}_{{\rm{tot}}} = \frac{\pi}{2} \in \big (\Theta_{\omega_{0}}(\chi) -\tfrac{\pi}{2}, \Theta_{\omega_{0}}(\chi) + \tfrac{\pi}{2}\big) \iff 0 < q < \sqrt{3},
\end{equation}
i.e., we can take $\hat{\Theta}_{{\rm{tot}}} = \frac{\pi}{2}$ if, and only if, $a = 1,2,\ldots,21$. Therefore, considering $0 < q < \sqrt{3}$, and choosing the phase $\hat{\Theta}_{{\rm{tot}}} = \frac{\pi}{2}$, we have 
\begin{equation}
\frac{d\psi}{dU} = \frac{d\psi/ds}{dU/ds} = \cot(3\arctan(y)), \ \ y = \frac{q + \psi}{U}.
\end{equation}
From above, since
\begin{equation}
\cot(3\arctan(y)) = \frac{1-3y^{2}}{y(3-y^{2})} \ \ \ \text{and} \ \ \ \frac{d\psi}{dU} = U\frac{dy}{dU} + y,
\end{equation}
we obtain 
\begin{equation}
U\frac{dy}{dU} = \frac{1-3y^{2}}{y(3-y^{2})} -y = \frac{y^{4}-6y^{2}+1}{y(3-y^{2})}.
\end{equation}
Observing that $\frac{dy}{ds} = \frac{dy}{dU} \frac{dU}{ds}$, we conclude that 
\begin{equation}
U \frac{dy}{ds} = \frac{dU}{ds} \Big (\frac{y^{4}-6y^{2}+1}{y(3-y^{2})}\Big ) \iff \frac{dU}{ds}\frac{1}{U} = \frac{y(3-y^{2})}{y^{4}-6y^{2}+1}\frac{dy}{ds}.
\end{equation}
Integrating both sides of the last expression in the right-hand side yields 
\begin{equation}
\log(U) = -\frac{1}{4} \log \big ( y^{4} - 6y^{2} + 1\big ) + C.
\end{equation}
From the initial conditions $U(0)=1$ and $y(0) = q$, we have
\begin{equation}
U^{4}( y^{4} - 6y^{2} + 1) = q^{4} - 6q^{2} + 1.
\end{equation}
Thus, we obtain
\begin{equation}
y(s)^{2} = 3 \pm \sqrt{8 +\frac{q^{4} - 6q^{2} + 1}{U(s)^{4}}}.
\end{equation}
Since $y(0)^{2} = q^{2}$, $q^{4} - 6q^{2} + 9 = (q^{2}-3)^{2}$, and $q^{2} < 3$, it follows that 
\begin{equation}
y(s) = \pm \sqrt{ 3 - \sqrt{8 +\frac{q^{4} - 6q^{2} + 1}{U(s)^{4}}}}.
\end{equation}
Therefore, from the initial condition $\psi(0) = 0$ and the relation $\psi(s) = U(s)y(s) - q$, we obtain the exact solution
\begin{equation}
\psi(s) = - q + U(s)\sqrt{ 3 - \sqrt{8 +\frac{q^{4} - 6q^{2} + 1}{U(s)^{4}}}}.
\end{equation}
From the above expression we see that $\psi(s) \sim (\sqrt{2}-1)U(s) - q$, as $s \to +\infty$. We notice that the dHYM solution $\Upsilon_{\phi}$, such that $\phi(s) = -\int_{0}^{s}\frac{\psi(u)}{u}du$, satisfies
\begin{equation}
\Lambda_{\omega_{{\rm{CY}}}}(\Upsilon_{\phi}) = 3y(s) + \frac{\psi'(s)}{V(s)}.
\end{equation}
Since the function on the right-hand side above is not constant, we conclude that the connection $\nabla$ on $\mathcal{L}$, satisfying $\sqrt{-1}F_{\nabla} = \Upsilon_{\phi}$ is dHYM but not HYM.

\item[(B)] Considering $a = b$ as in the previous example, the HYM equation $\Lambda_{\omega_{{\rm{CY}}}}(\Upsilon_{\phi}) = C$ yields
\begin{equation}
C = \Lambda_{\omega_{{\rm{CY}}}}(\Upsilon_{\phi}) = 3\frac{q + \psi}{U} + \frac{\psi'}{V},
\end{equation}
recall that $V = U'$. This is a first-order linear ODE and admits a family of smooth global solutions
\begin{equation}
\psi_{C}(s) = \frac{C}{4}U(s) - q + \Big (q - \frac{C}{4}\Big ) \frac{1}{U(s)^{3}}. 
\end{equation}
From above, we obtain a Hermitian connection $\nabla$ on $\mathcal{L}$ such that $\sqrt{-1}F_{\nabla} =\Upsilon_{\phi_{C}}$, with $\phi_{C}(s) = -\int_{0}^{s}\frac{\psi_{C}(u)}{u} du$, solving the HYM equation $\Lambda_{\omega_{{\rm{CY}}}}(\sqrt{-1}F_{\nabla}) = C$. In this case, considering
\begin{equation}
x_{C}(s) := \frac{q + \psi_{C}(s)}{U(s)} \ \ \ \ \text{and} \ \ \ \  y_{C}(s) := \frac{\psi_{C}'(s)}{V(s)},
\end{equation}
the dHYM equation for $\Upsilon_{\phi_{C}}$ reads
\begin{equation}
\Theta(s) := 3 \arctan(x_{C}(s)) + \arctan(y_{C}(s)) = \hat{\Theta}_{{\rm{tot}}}.
\end{equation}
If $C = 4q$, we have $\psi_{C}(s) = q(U(s) - 1)$, which implies that $\Upsilon_{\phi_{C}}$ is also a solution to the dHYM equation with phase $\hat{\Theta}_{{\rm{tot}}} = 4 \arctan(q)$, as predicted by Corollary \ref{CorollaryA}. On the other hand, if $C \neq 4q$, the function $\Theta(s)$ is not constant, i.e., the HYM connection $\nabla$ fails to solve the dHYM equation.
\end{itemize}
To the best of our knowledge, Example (B) above provides the first explicit example of an HYM connection which is not dHYM, on a holomorphic line bundle over a non-compact, non-toric Calabi-Yau manifold. Beyond its role as a counterexample, this computation illustrates the effectiveness of the ODE reduction developed in Theorem \ref{TheoremA}.

\subsection*{Acknowledgments.} E. M. Correa is supported by S\~{a}o Paulo Research Foundation FAPESP grant 2025/18843-1. 

\subsection*{Generative AI Disclosure} The author used the online version of DeepSeek as an auxiliary tool to perform some computations, search for references, and make minor edits during the development of the paper. The author takes full responsibility for all mathematical arguments, and the manuscript was entirely written and revised by the author.

\subsection*{Data Availability} Data sharing not applicable to this article as no datasets were generated or analysed during the current study.

\subsection*{Conflict of Interest Statement} The author declares that there is no conflict of interest.

\bibliographystyle{alpha}
\bibliography{bibli}

\begin{thebibliography}{MMMS00}

\bibitem[AB03]{AZAD}
Hassan Azad and Indranil Biswas.
\newblock Quasi-potentials and {K}\"{a}hler-{E}instein metrics on flag manifolds. {II}.
\newblock {\em J. Algebra}, 269(2):480--491, 2003.

\bibitem[Akh95]{Akhiezer}
Dmitri~N. Akhiezer.
\newblock {\em Lie group actions in complex analysis}.
\newblock Aspects of Mathematics, E27. Friedr. Vieweg \& Sohn, Braunschweig, 1995.

\bibitem[Bal23]{ballal2023supercritical}
Aashirwad Ballal.
\newblock The supercritical deformed {Hermitian} {Yang-Mills} equation on compact projective manifolds.
\newblock {\em Illinois Journal of Mathematics}, 67(1):73--99, 2023.

\bibitem[BR62]{BorelRemmert}
A.~Borel and R.~Remmert.
\newblock \"{U}ber kompakte homogene {K}\"{a}hlersche {M}annigfaltigkeiten.
\newblock {\em Math. Ann.}, 145:429--439, 1961/62.

\bibitem[Cal79]{Calabi1979}
Eugenio Calabi.
\newblock M\'etriques k\"ahl\'eriennes et fibr\'es holomorphes.
\newblock {\em Annales scientifiques de l'\'Ecole Normale Sup\'erieure}, 12(2):269--294, 1979.

\bibitem[Che21]{Chen2021j}
Gao Chen.
\newblock The j-equation and the supercritical deformed {Hermitian--Yang--Mills} equation.
\newblock {\em Inventiones mathematicae}, 225:529--602, 2021.

\bibitem[CJY20]{Collins2020}
Tristan~C Collins, Adam Jacob, and Shing-Tung Yau.
\newblock $(1, 1) $ forms with specified {Lagrangian} phase: a priori estimates and algebraic obstructions.
\newblock {\em Cambridge Journal of Mathematics}, 8(2):407--452, 2020.

\bibitem[CLT24]{chu2024nakai}
Jianchun Chu, Man-Chun Lee, and Ryosuke Takahashi.
\newblock A {Nakai--Moishezon} type criterion for supercritical deformed {Hermitian--Yang--Mills} equation.
\newblock {\em Journal of Differential Geometry}, 126(2):583--632, 2024.

\bibitem[Cor19]{Correa}
Eder~M. Correa.
\newblock Homogeneous contact manifolds and resolutions of {C}alabi-{Y}au cones.
\newblock {\em Comm. Math. Phys.}, 367(3):1095--1151, 2019.

\bibitem[Cor24]{correa2024dhym}
Eder~M Correa.
\newblock {DHYM} connections on higher rank holomorphic vector bundles over p (tp 2).
\newblock {\em Mathematische Zeitschrift}, 308(2):36, 2024.

\bibitem[Cor26]{correa2026deformed}
Eder~M Correa.
\newblock Deformed hermitian yang--mills equation on rational homogeneous varieties.
\newblock {\em Documenta Mathematica}, 2026.

\bibitem[COSD26]{charbonneau2026deformed}
Benoit Charbonneau, Gon{\c{c}}alo Oliveira, and Rosa Sena-Dias.
\newblock Deformed hermitian-yang-mills equation on the manifold of full flags.
\newblock {\em arXiv preprint arXiv:2607.08622}, 2026.

\bibitem[CXY18]{Collins2018deformed}
Tristan Collins, Dan Xie, and Shing-Tung Yau.
\newblock The deformed {Hermitian--Yang--Mills} equation in geometry and physics.
\newblock {\em Geometry and physics}, 1:69--90, 2018.

\bibitem[Fow24a]{fowdar2024examples}
Udhav Fowdar.
\newblock Examples of deformed spin (7)-instantons/donaldson--thomas connections.
\newblock {\em Communications in Mathematical Physics}, 405(8):183, 2024.

\bibitem[Fow24b]{fowdar2024explicit}
Udhav Fowdar.
\newblock Explicit abelian instantons on s1-invariant k{\"a}hler einstein 6-manifolds.
\newblock {\em Journal of Geometry and Physics}, 203:105269, 2024.

\bibitem[FW01]{FultonWoodward}
William Fulton and Chris Woodward.
\newblock On the quantum product of {Schubert classes}.
\newblock {\em Journal of Algebraic Geometry}, 13:641--661, 2001.

\bibitem[Gra62]{grauert1962modifikationen}
Hans Grauert.
\newblock {\"U}ber modifikationen und exzeptionelle analytische mengen.
\newblock {\em Mathematische Annalen}, 146(4):331--368, 1962.

\bibitem[HS02]{HwangSinger2002}
Andrew~D Hwang and Michael~A Singer.
\newblock A momentum construction for circle-invariant k{\"a}hler metrics.
\newblock {\em Transactions of the American Mathematical Society}, 354(6):2285--2325, 2002.

\bibitem[Hum72]{Humphreys}
James~E. Humphreys.
\newblock {\em Introduction to {L}ie algebras and representation theory}.
\newblock Graduate Texts in Mathematics, Vol. 9. Springer-Verlag, New York-Berlin, 1972.

\bibitem[Hum75]{HumphreysLAG}
James~E. Humphreys.
\newblock {\em Linear algebraic groups}.
\newblock Graduate Texts in Mathematics, No. 21. Springer-Verlag, New York-Heidelberg, 1975.

\bibitem[Jac22]{jacob2022deformed}
Adam Jacob.
\newblock The deformed hermitian-yang-mills equation and level sets of harmonic polynomials.
\newblock {\em arXiv preprint arXiv:2204.01875}, 2022.

\bibitem[Jan03]{jantzen2003representations}
Jens~Carsten Jantzen.
\newblock {\em Representations of algebraic groups}, volume 107.
\newblock American Mathematical Soc., 2003.

\bibitem[JY17]{JacobYau2017}
Adam Jacob and Shing-Tung Yau.
\newblock A special {Lagrangian} type equation for holomorphic line bundles.
\newblock {\em Mathematische Annalen}, 369:869--898, 2017.

\bibitem[Kob87]{Kobayashi+1987}
Shoshichi Kobayashi.
\newblock {\em Differential Geometry of Complex Vector Bundles}.
\newblock Princeton University Press, Princeton, 1987.

\bibitem[LB18]{Flagvarieties}
V.~Lakshmibai and Justin Brown.
\newblock {\em Flag varieties}, volume~53 of {\em Texts and Readings in Mathematics}.
\newblock Hindustan Book Agency, Delhi, 2018.
\newblock An interplay of geometry, combinatorics, and representation theory, Second edition of [ MR2474907].

\bibitem[LR08]{Lakshmibai2}
Venkatramani Lakshmibai and Komaranapuram~N. Raghavan.
\newblock {\em Standard monomial theory}, volume 137 of {\em Encyclopaedia of Mathematical Sciences}.
\newblock Springer-Verlag, Berlin, 2008.
\newblock Invariant theoretic approach, Invariant Theory and Algebraic Transformation Groups, 8.

\bibitem[LYZ00]{leung2000special}
Naichung~Conan Leung, Shing-Tung Yau, and Eric Zaslow.
\newblock From special {Lagrangian to Hermitian-Yang-Mills} via {Fourier-Mukai} transform.
\newblock {\em arXiv preprint math/0005118}, 2000.

\bibitem[Mat72]{MATSUSHIMA}
Yozo Matsushima.
\newblock Remarks on {K}\"{a}hler-{E}instein manifolds.
\newblock {\em Nagoya Math. J.}, 46:161--173, 1972.

\bibitem[MJ05]{malley2005theory}
Robert~E Malley~Jr.
\newblock The theory of differential equations: Classical and qualitative.: Classical and qualitative.
\newblock {\em SIAM Review}, 47(1):185, 2005.

\bibitem[MMMS00]{marino2000nonlinear}
Marcos Marino, Ruben Minasian, Gregory Moore, and Andrew Strominger.
\newblock Nonlinear instantons from supersymmetric p-branes.
\newblock {\em Journal of High Energy Physics}, 2000(01):005, 2000.

\bibitem[Pin19]{Pingali2019}
Vamsi~P Pingali.
\newblock A note on the deformed {Hermitian Yang-Mills PDE}.
\newblock {\em Complex Variables and Elliptic Equations}, 64(3):503--518, 2019.

\bibitem[SBBC02]{sale2002several}
On~Sale, MAA~Press Books, My~Bookshelf, and Successfully Copied.
\newblock Several complex variables with connections to algebraic geometry and lie groups.
\newblock 2002.

\bibitem[Ser54]{serre1954representations}
Jean-Pierre Serre.
\newblock Repr{\'e}sentations lin{\'e}aires et espaces homogenes k{\"a}hl{\'e}riens des groupes de lie compacts (d’apres armand borel et andr{\'e} weil).
\newblock {\em S{\'e}minaire Bourbaki}, 2(100):447, 1954.

\bibitem[She21]{sheu2021deformed}
Norman~Victor Sheu.
\newblock {\em The Deformed Hermitian-Yang-Mills Equation with Calabi Ansatz}.
\newblock PhD thesis, University of California, Davis, 2021.

\bibitem[Tak78]{MR528871}
Masaru Takeuchi.
\newblock Homogeneous {K}\"{a}hler submanifolds in complex projective spaces.
\newblock {\em Japan. J. Math. (N.S.)}, 4(1):171--219, 1978.

\bibitem[Tak20]{takahashi2020tan}
Ryosuke Takahashi.
\newblock Tan-concavity property for {Lagrangian} phase operators and applications to the tangent {Lagrangian} phase flow.
\newblock {\em International Journal of Mathematics}, 31(14):2050116, 2020.

\bibitem[VC10]{van2010ricci}
Craig Van~Coevering.
\newblock Ricci-flat k{\"a}hler metrics on crepant resolutions of k{\"a}hler cones.
\newblock {\em Mathematische Annalen}, 347(3):581--611, 2010.

\end{thebibliography}
\end{document}